\providecommand{\noopsort}[1]{}
\def\deg{\operatorname{deg}}
\let\cal\mathcal
\def\11{{\mathbf 1}}
\def\CC{{\mathbb C}}
\def\NN{{\mathbb N}}
\def\QQ{{\mathbb Q}}
\def\RR{{\mathbb R}}
\def\ZZ{{\mathbb Z}}
\newtheorem{thm}[subsection]{Theorem}
\newtheorem{lemm}[subsection]{Lemma}
\newtheorem{cor}[subsection]{Corollary}
\newtheorem{prop}[subsection]{Proposition}
\newtheorem{prr}[subsection]{Proposition}
\theoremstyle{definition}
\newtheorem{defn}[subsection]{Definition}
\newtheorem{example}[subsection]{Example}
\newtheorem{def-prop}[subsubsection]{Proposition-Definition}
\newtheorem{def-thm}[subsubsection]{Theorem-Definition}
\newtheorem{def-lem}[subsubsection]{Lemma-Definition}
\theoremstyle{remark}
\newtheorem{remark}[subsection]{Remark}
\theoremstyle{remark}
\theoremstyle{notation}
\newtheorem{notation}[subsection]{Notation}
\theoremstyle{plain}
\numberwithin{equation}{subsection}
\newcommand{\abs}[1]{\lvert#1\rvert}
\author{Georges Comte}
\address{Laboratoire de Math\'ematiques de l'Universit\'e de Savoie Mont~Blanc, UMR CNRS 5127,
B\^atiment Chablais, Campus scientifique,
73376 Le Bourget-du-Lac cedex, France}
\email{georges.comte@univ-savoie.fr}
\urladdr{http://gcomte.perso.math.cnrs.fr/}
\author{Chris Miller}
\address{Department of Mathematics
The Ohio State University
231 W. 18th Avenue
Columbus, OH 43210}
\email{miller@math.osu.edu}
\urladdr{https://people.math.osu.edu/miller.1987/}
\title[Points of bounded height on oscillatory sets]
{Points of bounded height on oscillatory sets}
\begin{document}

\begin{abstract}
We show that transcendental curves
in $\mathbb R^n$ (not necessarily compact) have few rational points of bounded height provided that the curves
are well behaved with respect to algebraic sets in a certain sense
and can be parametrized by functions belonging to a specified algebra of infinitely differentiable functions.
Examples of such curves include logarithmic spirals and solutions to Euler equations $x^2y''+xy'+cy=0$ with $c>0$.
\end{abstract}


\thanks{This work was initiated while the authors were in residence at the Mathematical Sciences Research Institute (Berkeley) during the Model Theory, Arithmetic Geometry and Number Theory program (January 20, 2014 to May 23, 2014).
Research by Comte also supported by ANR STAAVF. Research by Miller also supported by Universit\'e Savoie Mont Blanc and NSF Grant DMS-1001176.}

\subjclass[2010]{Primary 11G50; Secondary 03C64}


\maketitle

\section{Introduction}\label{S:intro}
The {\bf height} of $(a_1/b_1,\dots,a_n/b_n)\in \QQ^n$ is by definition
$\max\{\vert a_i\vert,  \vert b_i \vert : i=1, \cdots, n \},$
when each pair $(a_i, b_i)\in \ZZ^2$ is coprime.
For $X\subseteq \RR^n$ and $T\geq 0$, let
$\#X(\QQ,T)$
be the number of points in $X\cap \QQ^n$ of height at most $T$.

The overarching question: What is the asymptotic behaviour of $\#X(\QQ,T)$ as $T\to +\infty$?
As it can be notoriously difficult to determine whether $X$ has any rational points at all, emphasis tends to be on the establishment of upper bounds.
We say that $\#X(\QQ,T)$ is sub-polynomial if
$$
\lim_{T\to+\infty} \frac{\log (\max\{1,\#X(\QQ,T)\})}{\log T}=0.
$$
For temporary convenience in exposition, we put
$$
\rho(X)=\varlimsup_{T\to+\infty}\frac{\log (\max\{1,\#X(\QQ,T)\})}{\log\log T}\in[0,+\infty],
$$
and say that $X$ has finite order if $\rho(X)<\infty$.

There is a natural split between the algebraic and the transcendental;
we are concerned with the latter.
Indeed, if $X$ is algebraic, then $\#X(\QQ,T)$ is usually not sub-polynomial (see for instance Browning, Heath-Brown and Salberger \cite{BrHeSa06}).
On the other hand, if $X$ is the graph of a  transcendental analytic function on a compact interval of $\RR$, then
$\#X(\QQ,T)$ is sub-polynomial by Bombieri and Pila~\cite{BoPi1989}, and this is sharp (see Pila \cite{Pi06counterexample}, or Surroca \cite{Su02}, \cite{Su06}).
More generally,
by Pila and Wilke~\cite{PiWi2006}, the sub-polynomial bound holds for the ``transcendental part'' of $X$ (see Section~\ref{S;logic} for the definition) if $X$ generates an o-minimal structure on the real field (see, e.g., van den Dries and
Miller~\cite{DrMi96} for the definition).
Several results and conjectures even assert that $\rho(X)<+\infty$ if
$X$ has some specific dimension or is definable
in some specific o-minimal structures (see Binyamini and Novikov \cite{BiNo16},
Cluckers, Pila and Wilkie \cite{ClPiWi16},  Jones, Miller and Thomas \cite{JoMiTh11}, Jones and Thomas \cite{JoTh12}, Pila \cite{Pi2006}, \cite{Pi07}, \cite{Pi10}).
In this direction, and for more connections to logic, see Section~\ref{S;logic}.

In this paper, we do not restrict attention to non-oscillatory sets (that is, sets that generate an o-minimal structure).
Rather, we develop some sufficiency conditions for showing that certain kinds of curves in $\RR^n$, oscillatory or not, have finite order (see also
Besson \cite{Be11}, \cite{Be14}, Boxall and Jones \cite{BoJo15b}, \cite{BoJo15}, Masser \cite{Ma11}).
Our main result is technical; a full statement is best postponed (see
Theorem~\ref{fewratpoints}), but we shall give some examples now as motivation.

For $a>0$, we say that a $C^\infty$ function $g\colon [a,\infty[\to\RR$ is  \textbf{slow}
(see Definition \ref{slowp})
if there exist nonnegative real numbers $A$, $B$, $C$ and $D$ such that,
for all $x\geq a$ and $p\in\NN$,
$$
\lvert \frac{g^{(p)}(x)}{p!}\rvert\leq   D (Ap^B\frac{\log^Cx}{x})^p.
$$
We denote by $S([a,+\infty[)$ the set of slow functions (with possibly different data $A,B,C,D$).
As observed in Remark \ref{re:AlgeraSlow}, the set of slow functions is an algebra stable under derivation.

Let $\mathcal E$ be the collection of all elementary real functions of one variable, as defined in Khovanskii~\cite[\S1.5]{Kh91}, whose domain contains an unbounded-above open interval.
All functions in $\mathcal E$ are (real-)analytic.

\begin{prop}\label{introcurve}
Let $f,g ,s_1, s_2\in  ( \mathcal E\cap S([a,+\infty[))\cup \{\mathrm{Id} \}$
 and  $f,g$ belong to the set of functions $h$ satisfying
$$
\ h=\mathrm{Id} \   \hbox{ or } \ \exists \alpha \ge 0, \
\forall x\ge a, \forall p\ge 0,  \  \lvert h^{(p)}(x)\rvert \le \alpha^p.
$$
Let $F,G >0$, $\ell,q\in \NN^*$, and let $u,v$ be real numbers that are both rational numbers or such that one of them is irrational and is not a  $U$-number of degree $\nu=1$ in Mahler's classification
\footnote{ See Baker \cite[Chapter 8]{Ba75} for definitions and details, and note that almost all numbers are
irrational numbers that are not $U$-numbers of degree
$\nu=1$ by \cite[Theorem 8.2]{Ba75}.}.
Let $x_0$ be such that the curve
$$
X:=\{\bigl(
u+ e^{-Fx}f(s_1(x^{\ell})),
v+ e^{-Gx}g(s_2(x^q))\bigr): x\geq x_0\}
$$
is defined and contains no infinite semialgebraic set.
Then $X$ has finite order.
\end{prop}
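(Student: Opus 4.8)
The plan is to deduce Proposition~\ref{introcurve} from the main result, Theorem~\ref{fewratpoints}, by recognizing $X$ as an instance of the curves treated there. Write $\gamma=(\gamma_1,\gamma_2)\colon[x_0,+\infty[\to\RR^2$ for the given parametrization, so $\gamma_1(x)=u+e^{-Fx}f(s_1(x^{\ell}))$, $\gamma_2(x)=v+e^{-Gx}g(s_2(x^{q}))$ and $X=\gamma([x_0,+\infty[)$. Since $\gamma$ is a connected real-analytic arc, ``$X$ contains no infinite semialgebraic set'' is equivalent to ``$X$ lies in no algebraic curve'', which is the non-degeneracy hypothesis of Theorem~\ref{fewratpoints} (it is needed: an algebraic curve may carry polynomially many rational points of bounded height). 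Two things then remain: that $\gamma_1,\gamma_2$ belong to the algebra of $C^\infty$ functions to which Theorem~\ref{fewratpoints} applies, and that $X$ is ``well behaved with respect to algebraic sets'' in the quantitative sense used there. The arithmetic hypotheses on $u,v$ are used only afterwards, in passing from that theorem's conclusion to the order bound.

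For membership in the algebra, the key elementary observation is that $x\mapsto e^{-Fx}$ is slow: $\bigl|(e^{-Fx})^{(p)}/p!\bigr|=(Fx)^{p}e^{-Fx}/(x^{p}\,p!)\le p^{p}e^{-p}/(x^{p}\,p!)\le (1/x)^{p}$ for all $x\ge a$ and $p\in\NN$, using $\max_{t\ge 0}t^{p}e^{-t}=p^{p}e^{-p}$ and $p^{p}e^{-p}\le p!$; so $e^{-Fx}$ satisfies Definition~\ref{slowp}. One then needs the algebra to be stable under the operations forming $\gamma_1,\gamma_2$ out of slow functions, monomials and $e^{-Fx}$: the set $S([a,+\infty[)$ is an algebra stable under derivation (Remark~\ref{re:AlgeraSlow}); a composition lemma --- via Fa\`a di Bruno, using that $x^{\ell}$ has finitely many nonzero derivatives --- shows right-composition of a slow function with $x\mapsto x^{\ell}$ is slow, so $x\mapsto s_i(x^{\ell})$, $x\mapsto s_i(x^{q})$ are slow when $s_i\ne\mathrm{Id}$; and post-composition with $f$ or $g$, which satisfies $|f^{(p)}|\le\alpha^{p}$ for all $p$ (in particular $|f|\le 1$), is again controlled by Fa\`a di Bruno, $(f\circ\varphi)^{(p)}$ being a universal polynomial with nonnegative integer coefficients in the $f^{(j)}\circ\varphi$ (each $\le\max(1,\alpha)^{p}$) and the $\varphi^{(i)}$. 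The degenerate case $f=s_i=\mathrm{Id}$ yields $x\mapsto e^{-Fx}x^{\ell}$, whose $p$-th derivative is $e^{-Fx}$ times an explicit degree-$\ell$ polynomial and is dominated by $e^{-(F/2)x}$ times slow functions --- again in the algebra. Finally all functions in sight are elementary in the sense of \cite[\S1.5]{Kh91}, a class closed under the sums, products and compositions used; hence $\gamma_1,\gamma_2$ lie in the required algebra. The work here is pure bookkeeping: tracking that the four-parameter form of the slow estimate survives each operation.

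For the behavior with respect to algebraic sets: because $\gamma_1,\gamma_2$ are elementary --- hence real-analytic and Pfaffian --- Khovanskii's theory bounds the number of zeros on any interval of $P\circ\gamma$, for $P\in\RR[Y_1,Y_2]$ of degree $d$, by a polynomial in $d$ independent of the interval, which is exactly the uniform control of $X\cap\{P=0\}$ that Theorem~\ref{fewratpoints} requires. Granting its hypotheses, the theorem bounds the number of rational points of height at most $T$ on $\gamma([x_0,N])$ in terms of $N$ and $T$. It remains to confine all but finitely many rational points of $X$ of height at most $T$ to parameters $x\le C\log T+C$. At such a point $\gamma(x)=(p_1/r_1,p_2/r_2)$, either $f(s_1(x^{\ell}))=g(s_2(x^{q}))=0$, so $\gamma(x)=(u,v)$ --- one point, rational only if $u,v\in\QQ$, contributing at most $1$ --- or some coordinate of $\gamma(x)$ differs from the corresponding coordinate of $(u,v)$; then the hypothesis on $u,v$ lets us choose $w\in\{u,v\}$ that both differs from the corresponding coordinate of $\gamma(x)$ and is rational or the prescribed irrational number that is not a $U$-number of degree $1$ (if $u$, say, is that irrational number, then $\gamma_1(x)\ne u$ since $\gamma(x)\in\QQ^2$, so $w=u$ works). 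The corresponding difference $\gamma_j(x)-w$ has absolute value at most a constant times $e^{-H'x}$ for some $H'>0$ depending only on $F,G,\ell,q$, while it is a nonzero rational of height $O(T)$, so at least $c/T$, if $w\in\QQ$, and at least $c\,r^{-\mu}\ge c\,T^{-\mu}$ by Mahler's classification (\cite[Ch.~8]{Ba75}) if $w$ is the prescribed irrational number; in every case $e^{-H'x}\ge c'T^{-\mu}$, so $x\le C\log T+C$. Applying Theorem~\ref{fewratpoints} with $N=C\log T+C$, and noting that when one of $u,v$ is irrational the point $(u,v)$ has an irrational coordinate and is never counted, we get $\#X(\QQ,T)\le(\log T)^{O(1)}$, i.e.\ $\rho(X)<\infty$; here we use that replacing $T$ by a fixed power of $T$ leaves the order unchanged, as $\log\log T^{O(1)}=\log\log T+O(1)$.

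I expect the main obstacle to be twofold. On the soft side, it is the closure bookkeeping of the second paragraph: verifying that the precise algebra of Theorem~\ref{fewratpoints} is stable under composing slow functions with the bounded-derivative functions $f,g$ and with the monomials $x^{\ell},x^{q}$, and under multiplication by $e^{-Fx}$ --- routine but tedious in the constants. On the hard side --- which is really the substance of Theorem~\ref{fewratpoints} itself rather than of this deduction --- it is making the interplay between the Khovanskii-type zero counting and the slow derivative estimates quantitative enough that, over $[x_0,C\log T+C]$, the Bombieri--Pila determinant method needs only polylogarithmically many subintervals in $T$: the length of a usable subinterval shrinks as $T$ grows, and one must check that ``slow'' is slow enough for this to cost only a power of $\log T$, not a power of $T$. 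Finally, excluding the Liouville numbers (the $U$-numbers of degree $1$) is exactly what keeps the confining interval of length $O(\log T)$; for a Liouville $u$ one could only bound $x$ by a small power of $T$, and the order bound would collapse.
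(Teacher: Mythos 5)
Your proposal takes a genuinely different route from the paper's. The paper's proof is short because it reparametrizes by $\log$: setting $t=\log y$ turns $e^{-Ft}f(s_1(t^{\ell}))$ into $y^{-F}f(s_1(\log^{\ell}y))$, which is exactly the form covered by Lemma~\ref{slowspiral} and Remark~\ref{re:elementarycurve}. With $y^{-F}$ decay, the height control function is a power of $T$, and the slowed argument $\log^{\ell}y$ is what makes the Khovanskii count polynomial in $\log L$ (the paper's computation of ${\cal B}$ gives a factor $\lfloor\log L/\pi\rfloor+1$). This is the first ``In particular'' alternative of Theorem~\ref{fewratpoints}. You instead stay with the original parameter, observe directly that $e^{-Fx}$ is slow, and aim for an $O(\log T)$ height control function — the second ``In particular'' alternative. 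The two decompositions are not the same, and yours is a legitimate alternative; in fact the $O(\log T)$ confinement is more forgiving of the B\'ezout bound.

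But two of your intermediate claims are not right as stated, and both are exactly the things the $\log$-reparametrization is used to avoid. First, ``Khovanskii's theory bounds the number of zeros of $P\circ\gamma$ on any interval by a polynomial in $d$ independent of the interval'' is false: the Khovanskii bound for a system involving $\sin$ or $\cos$ grows with the number of periods swept on the interval, so the correct statement is a bound polynomial in $d$ \emph{and} in the interval length $L$. (The paper's own B\'ezout estimate visibly carries the factor $\lfloor\log L/\pi\rfloor+1$.) Your conclusion survives this correction — since $L=\varphi(T)=O(\log T)$, a polynomial-in-$L$ bound is still polylogarithmic in $T$ — but the uniformity you asserted is not what Theorem~\ref{fewratpoints} asks for, and not what Khovanskii provides. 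Second, the lemma that ``right-composition of a slow function with $x\mapsto x^{\ell}$ preserves slowness'' is nowhere in the paper: the composition results proved there are Lemma~\ref{majdkLogl} (for $\log^{\ell}$) and Lemma~\ref{majdkSinLogl}, precisely because the authors substitute $t=\log y$ so that the inner function becomes $\log^{\ell}$ rather than $x^{\ell}$. Your lemma is plausible via Fa\`a di Bruno (and I believe true, with constants worsened by $\ell$ in the exponent $B$), but it is genuinely new bookkeeping that would have to be carried out, including the mixed degenerate cases $s_i=\mathrm{Id}$, $f\neq\mathrm{Id}$ where the inner function is the unbounded $x^{\ell}$ and only the multiplication by $e^{-Fx}$ rescues slowness of the product. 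So: a workable alternative route, but with one false claim to be replaced by the correct (length-dependent) Khovanskii bound, and one unproved closure lemma that the paper's reparametrization is designed to sidestep.
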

%
%

Example of such curves are easy to produce.

\begin{example}\label{exampleintro}
The curve given by the parametrization
$$ \left(\log2+\frac{\arctan \log^2 x}{x^5(2+\cos^3\log x)},
\pi+ \frac{\sin \log x}{\sqrt{x}(1+\log\log x)}
\right) , \ x\ge 2 $$
has finite order.
\end{example}

An explicit bound on $\rho$ can be
computed in terms of  the complexity of the elementary functions $f,g,s_1$ and $s_2$.
Thus, for particular choices of $f,g,s_1, s_2$,
we can get more refined  results, such as:

\begin{prop}\label{introspiral}
Let $a_1,a_2,F,G,c_1,c_2$ be nonzero real numbers
with $F,G>0$.
Let $\ell,q\in \NN^*$ and let $u,v$ be as in Proposition
\ref{introcurve}.
Then
$$
X:=\{\bigl(
u+a_1e^{Fx}\cos(c_1x^\ell),
v+a_2e^{Gx}\sin(c_2x^q)\bigr): x \in \RR\}
$$
has order at most $5+4\max\{\ell,q\}$.
\end{prop}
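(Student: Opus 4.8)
The plan is to derive this as a special case of Proposition \ref{introcurve} (equivalently, of the main Theorem \ref{fewratpoints}), by exhibiting the spiral curve in the normal form
$$
\bigl(u+e^{-F'x}f(s_1(x^{\ell})),\ v+e^{-G'x}g(s_2(x^{q}))\bigr)
$$
and then tracking the complexity parameters through the bound on $\rho$ that Theorem \ref{fewratpoints} provides. First I would reparametrize: replacing $x$ by $-x$ turns $a_1e^{Fx}\cos(c_1x^{\ell})$ into $a_1e^{-Fx}\cos(c_1(-x)^{\ell})$, and up to sign absorbed into $a_1$ and a parity adjustment of $c_1$ this has the shape $e^{-Fx}\cdot(\text{slow elementary function of }x^{\ell})$ since $x\mapsto a_1\cos(c_1 x)$ is elementary, analytic, bounded with all derivatives bounded (so it lies in the restricted class for $f,g$ in Proposition \ref{introcurve}), and the identity/monomial substitutions $s_1=\mathrm{Id}$ on $x^{\ell}$ are allowed. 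Since $F,G>0$, the hypotheses "$F,G>0$" of Proposition \ref{introcurve} are met; the condition on $u,v$ is literally inherited; and the curve contains no infinite semialgebraic set because $e^{-Fx}\cos(c_1x^{\ell})$ and $e^{-Gx}\sin(c_2x^{q})$ are not algebraically dependent over $\RR$ in a way producing a semialgebraic arc — one argues this from the fact that along any nontrivial real algebraic relation the functions would have to be Pfaffian/Noetherian of a constrained type, contradicting the genuine oscillation with polynomially growing frequency (this is the one point requiring a short separate argument, but it is exactly the kind of non-oscillation-to-emptiness statement the paper is set up to use).

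The substance of the proof is the \emph{explicit} order bound, and here I would unwind the complexity count behind Theorem \ref{fewratpoints} for these specific data. The building blocks are $\cos$ (or $\sin$), which as an elementary function has bounded "format/degree" in Khovanskii's sense independent of the parameters $c_1,c_2$, composed with the monomials $x^{\ell}$, $x^{q}$; the exponential prefactors $e^{-Fx}$ contribute a fixed bounded amount. The claimed value $5+4\max\{\ell,q\}$ should emerge as: a universal additive constant ($5$) coming from the exponential factors, the one-variable elementary core, and the ambient dimension $n=2$, plus a contribution linear in the degree of the inner polynomial substitution, with coefficient $4$, maximized over the two coordinates since the bound for a product curve is governed by the worse coordinate. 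Concretely I expect to invoke the statement that $\rho(X)\le$ (some polynomial expression in the complexity of $f,g,s_1,s_2$) from Theorem \ref{fewratpoints}, substitute the constant complexity of $\cos/\sin$ and $\exp$ and the complexity $\deg(x^{\ell})=\ell$, $\deg(x^{q})=q$ of the substitutions, and check that the resulting number simplifies to $5+4\max\{\ell,q\}$.

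The main obstacle is bookkeeping rather than conceptual: making sure the reduction to normal form is legitimate on the nose — in particular that $x\mapsto a_i\cos(c_i x)$, $a_i\sin(c_i x)$ genuinely lie in $\mathcal E\cap S([a,+\infty[)$ with the \emph{uniform} derivative bound $|h^{(p)}(x)|\le\alpha^p$ demanded of $f,g$ (true, with $\alpha=\max\{1,|c_i|\}\cdot(\text{const})$, since $\cos^{(p)}$ is again $\pm\cos$ or $\pm\sin$ and the chain rule contributes $|c_i|^p$), and that the sign/parity manipulations needed to pass from $e^{Fx}$ with $x\in\RR$ to $e^{-F'x}$ with $x\ge x_0$ do not change the curve as a subset of $\RR^2$. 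The second delicate point is verifying $X$ contains no infinite semialgebraic set: I would handle this by noting that an infinite semialgebraic subset of a $C^1$ curve would contain a real-analytic arc satisfying a polynomial equation $P(X_1,X_2)=0$, and then show that substituting the parametrization into $P$ yields a function that is a nonzero exponential-polynomial in $x$ with trigonometric terms of frequency growing like $x^{\ell-1}$, hence has only isolated zeros unless $P\equiv 0$, which is impossible by the algebraic independence of the two coordinate functions. Once these two checks are in place, the order bound is a direct substitution into the main theorem's explicit estimate.
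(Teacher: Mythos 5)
Your top-level plan --- put the spiral in the normal form of Proposition~\ref{introcurve} and read off the order from the explicit bound in Theorem~\ref{fewratpoints} --- is the paper's plan too; the paper reparametrizes by $\log$ (so $x=-\log t$ turns the decaying tail into $\frac{1}{t^F}\sin\circ\log^\ell t$) and then invokes Proposition~\ref{fewratpointsLogSpiral}, where the actual work is carried out.  The difficulty is that the proposal defers precisely that work, and the guesses it offers for how it goes are wrong.  The exponent $5+4\max\{\ell,q\}$ does not split as you suggest (the $5$ has nothing to do with the exponential factors or the ambient dimension $n=2$, and the $4$ is not the coefficient of an ``inner polynomial degree'').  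It is assembled from three explicit computations: (i)~Lemma~\ref{slowspiral}, which shows the reparametrized curve $(\frac{1}{t^F}\sin\circ\log^\ell t,\frac{1}{t^G}\cos\circ\log^q t)$ is a slow parametrization with constants $B=\max\{\ell,q\}+1$, $C=\max\{\ell,q\}-1$ and decay $E=\max\{F,G\}$ in the sense of Remark~\ref{rk: slow_+ parametrizations}, yielding the factor $\log_+^{2(B+C)}T=\log_+^{4\max\{\ell,q\}}T$ in Theorem~\ref{fewratpoints}; (ii)~the height control function $\varphi(T)=T^{1/\min(F,G)}$ together with Remark~\ref{rk: slow_+ parametrizations} (which replaces $\log^{C+1}\varphi$ by $\log\varphi$), contributing one further power of $\log T$; (iii)~the Khovanskii-type B\'ezout bound worked out just after Lemma~\ref{slowspiral}, ${\cal B}(L,d)=O(d^3\log L)$, so ${\cal B}(\varphi(T),\log_+T)=O(\log^4T)$.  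Summing gives $4\max\{\ell,q\}+1+4$.  Without producing $B$, $C$ and the B\'ezout exponent, the claimed bound is not established; that computation is the content of the proposition.

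Three further gaps worth flagging.  You do not need the Pfaffian/Noetherian oscillation argument for transcendentality: once the finite B\'ezout bound ${\cal B}(L,d)$ is available (and you need it regardless), an infinite semialgebraic subset of $X$ would force a nonzero polynomial to have infinitely many zeros along $\gamma$ on a bounded parameter interval, a contradiction.  The pointwise hypothesis $\vert h^{(p)}(x)\vert\le\alpha^p$ at $p=0$ forces $\vert h\vert\le 1$, so $a_1\cos(c_1\,\cdot\,)$ with $\vert a_1\vert>1$ requires the integer-division rescaling of Remark~\ref{re: f and g bounded by 1}, not the $\alpha=\max\{1,\vert c_i\vert\}\cdot\text{const}$ you propose.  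Finally, since the parameter domain in Proposition~\ref{introspiral} is all of $\RR$, the substitution $x\mapsto-x$ only recasts the inward tail $x\to-\infty$ in the form of Proposition~\ref{introcurve}; the outward tail $x\to+\infty$ has a growing exponential factor and is not a slow parametrization, so it needs a separate argument that the proposal never gives (the paper covers the remaining, non-slow part of the parameter interval via \cite{Pi2006} and mild parametrizations).
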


As a special case, we have $\rho(\mathbb S_\omega)\leq 9$  for
every logarithmic spiral
$$
\mathbb S_\omega:=\{\bigl(e^{x}\cos(\omega x), e^{x}\sin(\omega x)\bigr): x \in \RR\}\quad (\omega\neq 0).
$$
Observe that $\mathbb S_\omega$ is a maximal (real-time) trajectory of $\dot z=(1+i\omega)z$
and a subgroup of $(\mathbb C^*,\cdot\,)$.
It is easy to see that $\rho(\mathbb S_{\pi/\log 2})\geq 1$.

We have a related result for functions.

\begin{prop}\label{introeuler}
Let $f, s\in \cal ({\cal E}\cap S([a,+\infty[))\cup \{\mathrm{Id}\}$,  and $\ell$ and $f$ be as in Proposition \ref{introcurve}.
Then the graph, X, of  $f\circ s \circ \log^\ell$ has finite
order. In particular,
let $a$ and $c$ be nonzero real numbers,
$\ell \in \NN^*$ and $f\in\{\sin,\cos\}$.
Then the graph of
$af(c\log^\ell) $ has order at most $5+4\ell$.
\end{prop}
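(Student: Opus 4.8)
The plan is to deduce this from Proposition \ref{introcurve}, of which it is essentially the ``one‑dimensional'' or ``graph'' analogue. First I would observe that the graph $X$ of $h:=f\circ s\circ\log^{\ell}$ on an unbounded‑above interval sits inside $\RR^2$, and I want to realize it as (a subset of) a curve of the shape treated in Proposition \ref{introcurve}. The key trick: reparametrize. Write $e^{x}$ in place of the variable, so that on the parameter domain $[x_{0},+\infty[$ the map $x\mapsto(x, h(x))$ becomes, after the substitution $t=\log x$, the map $t\mapsto (e^{t}, f(s(t^{\ell})))$. The second coordinate is now of the form $v+e^{-G t}g(s_{2}(t^{q}))$ with $v=0$, $G=0$ (so $e^{-Gt}\equiv 1$), $g=f$, $s_{2}=s$, $q=\ell$ — all admissible data for Proposition \ref{introcurve}, since $f,s\in(\mathcal E\cap S([a,+\infty[))\cup\{\mathrm{Id}\}$ and $f$ satisfies the bounded‑derivative hypothesis by assumption. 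The first coordinate $e^{t}$ is of the form $u+e^{-Ft}f(s_{1}(t^{\ell'}))$ with $u=0$, $F=-1<0$... — here I need to be slightly careful, because Proposition \ref{introcurve} demands $F>0$. So instead I would put the exponential on the \emph{second} coordinate and the graph data on the first; i.e. present $X$ as $\{(h(e^{t}), e^{-t})\,:\,t\ge t_{0}\}$, which is the image of the graph under the semialgebraic involution $(x,y)\mapsto(x,1/y)$ of $(\RR^{\times})^{2}$, hence has finite order iff the original graph does. Now the first coordinate is $u+e^{-Ft}f(s_{1}(t^{\ell}))$ with $u=0$, $F=0$, $f$ our $f$, $s_{1}$ our $s$; the second coordinate is $v+e^{-Gt}g(s_{2}(t^{q}))$ with $v=0$, $G=1>0$, $g=\mathrm{Id}$, $s_{2}=\mathrm{Id}$, $q=1$ — again admissible. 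The hypothesis ``$u,v$ are as in Proposition \ref{introcurve}'' (here $u=v=0\in\QQ$) is satisfied, and the no‑infinite‑semialgebraic‑subset hypothesis transfers because our $X$ contains no infinite semialgebraic set by the standing assumption on the graph (its transcendence is part of the statement via Proposition \ref{introcurve}'s conclusion being about finite order for curves with that property; I would simply assume, as in the companion propositions, that $h$ is not semialgebraic, which is automatic for the explicit $\sin,\cos$ cases).

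Second, for the ``in particular'' clause with $f\in\{\sin,\cos\}$ and $h=af(c\log^{\ell})$: this is not literally of the form $f\circ s\circ\log^{\ell}$ with $f\in\mathcal E\cap S$ unless I absorb constants. I would take $s(x)=cx$ (which is $\mathrm{Id}$ scaled — still in the relevant class, or handle the scalar $c$ separately) and $\tilde f(y)=af(y)$, noting $\sin,\cos\in\mathcal E$ and, crucially, that $\sin,\cos$ and their scalar multiples are \emph{slow} and satisfy $|\tilde f^{(p)}|\le\alpha^{p}$ with $\alpha=\max\{1,|a|,|c|\}$ or so — so the bounded‑derivative hypothesis on ``$f$'' in Proposition \ref{introcurve} holds. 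Then I invoke the quantitative part: the remark after Proposition \ref{introcurve} (and Proposition \ref{introspiral}'s proof) says $\rho$ is bounded in terms of the complexity of $f,g,s_{1},s_{2}$, and the exponent $5+4\max\{\ell,q\}$ arising there specializes, with $q=1$ and the $g$‑side trivial, to $5+4\ell$. So I would track the bound through the reduction above and read off $\rho(X)\le 5+4\ell$.

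The main obstacle is bookkeeping the quantitative bound rather than any genuine difficulty: I must verify that the semialgebraic reparametrizations $x\leftrightarrow e^{t}$ and $y\leftrightarrow 1/y$ do not inflate the order (they don't — finite order is invariant under semialgebraic changes of coordinates that are birational on the relevant charts, and this invariance is used already in deriving Propositions \ref{introspiral} and the $\mathbb S_{\omega}$ corollary), and that substituting $s_{2}=\mathrm{Id}$, $g=\mathrm{Id}$, $q=1$, $G=1$ into whatever explicit bound Theorem \ref{fewratpoints} yields collapses the $\max\{\ell,q\}$ to $\ell$. The slight annoyance with the sign of $F$ — resolved by the $y\mapsto 1/y$ flip — is the only real subtlety; everything else is a direct specialization of Proposition \ref{introcurve}.
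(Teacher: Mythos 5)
Your proposed reduction of Proposition~\ref{introeuler} to Proposition~\ref{introcurve} has a genuine gap, and in fact the paper does not derive the former from the latter; both are separate corollaries of Theorem~\ref{fewratpoints}.

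The difficulty is structural. Proposition~\ref{introcurve} requires \emph{both} $F>0$ and $G>0$, which forces both coordinates of the curve to converge exponentially fast to the point $(u,v)$ as the parameter goes to infinity. The graph of $h=f\circ s\circ\log^\ell$ cannot be put in that form by any semialgebraic change of coordinates: whichever coordinate carries the oscillatory data $f(s(t^\ell))$ has no decay at all, so you are forced to take the corresponding exponent ($F$ or $G$) equal to zero. Your own workaround, writing the curve as $\{(f(s(t^\ell)),e^{-t})\}$, takes $F=0$ on the first coordinate; the further flip to $\{(e^{-t},f(s(t^\ell)))\}$ shifts the problem to $G=0$ on the second. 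Either way you are outside the hypotheses. You noticed the issue with $F=-1$ in your first attempt, but your fix quietly substitutes $F=0$ and labels the data ``admissible,'' which it is not. (There is also a small bookkeeping slip: with $g=\mathrm{Id}$, $s_2=\mathrm{Id}$, $q=1$ the second coordinate would be $te^{-t}$, not $e^{-t}$; you would need $g$ to be the constant~$1$ for that coordinate. This is minor and fixable, but the $F,G>0$ obstruction is not.)

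What the paper actually does (Section~\ref{Section:TheCaseofGraphs}) is prove a \emph{separate} graph-counting statement, Proposition~\ref{fewratpointsgraph}, directly from Theorem~\ref{fewratpoints}. The key observation is that if $g$ is slow, then $\gamma(x)=(1/x,\,g(x))$ is a slow parametrization satisfying the strengthened condition (\ref{eq: slow_+ parametrization}) of Remark~\ref{rk: slow_+ parametrizations}, and the rational points of $\gamma([a,\infty[)$ of height $\le T$ are in height-preserving bijection with those of the graph. The decay of the first coordinate here is $1/x$, i.e.\ polynomial, not exponential; that is precisely the regime Proposition~\ref{introcurve} excludes, but which Theorem~\ref{fewratpoints} covers once one supplies the height control function $\varphi(T)=T$ (Remark~\ref{hcf}(3)) and a B\'ezout bound as in Corollary~\ref{Cor:fewgraphsinlogl}. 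The exponent $5+4\ell$ then comes out of that computation (B\'ezout bound $\lesssim \log^4 T$ from Khovanskii, plus $\log^{2(B+C)+1}T$ with $B=\ell+1$, $C=\ell-1$ from Lemma~\ref{majdkSinLogl}), not from specializing $q=1$ in the exponent $5+4\max\{\ell,q\}$ of Proposition~\ref{introspiral}. To repair your argument you would need to go back to Theorem~\ref{fewratpoints} and redo the estimate with the $(1/x,g(x))$ parametrization, which is exactly what Proposition~\ref{fewratpointsgraph} packages.
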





With $\ell=1$, we obtain maximal solutions to the Euler equation $x^2y''+xy'+c^2y=0$, indeed, $\{\cos (c\log),\sin(c\log)\}$ is a set of fundamental solutions on $]0,\infty[$.
It is easy to see that $\rho\geq 1$ for $c=\pi/\log 2$.
The proof generalizes to show that all transcendental slow functions in $\mathcal E$ that satisfy certain further technical conditions have finite order (see Proposition \ref{fewratpointsgraph}).

To the best of our knowledge, these results yield the first known examples of real-analytic submanifolds (embedded and connected) of the plane having nonzero finite order and whose intersection with some
real-algebraic set has infinitely many connected components.
(By Lindemann-Weierstrass, $\rho(\sin x)=0$.
It is an easy exercise that $\rho(\sin(\pi x))=+\infty$.)

For $X$ as in Propositions~\ref{introcurve}, \ref{introspiral} and~\ref{introeuler}, it follows from work of Pila \cite{Pi2006} that
every compact subset of $X$ has finite order, but via proof that does not yield finite order for $X$ itself.
We remedy this by establishing the existence
of compact connected $K_X\subseteq X$ such that
$\rho(X\setminus K_X)<\infty$, thus yielding $\rho(X)=\max(\rho(K_X),\rho(X\setminus K_X))<+\infty$.
The constants witnessing the bounds appearing in \cite{Pi2006} depend a priori on the choice of the compact subset of $X$, and in no case could we conclude in some visibly simple way that the set of all such constants
may be bounded as the length of our parametrizing interval goes to infinity.
For instance, by \cite{Pi2006}, every bounded subset of the graph of $\sin \pi x$ has finite order, but again $\sin \pi x$ has infinite order on any unbounded interval.
Thus, dealing with the oscillatory case requires tight control of any involved constants appearing in any known methods as the parameter interval we are looking
at increases in length.
Since we use here the determinant method of \cite{BoPi1989}, as in \cite{PiWi2006}, and since the constants that arise from this method come from bounds on the derivatives
of the functions
parametrizing the set, we impose some tameness on the derivatives via the algebra of slow functions.
This condition may be viewed as the
oscillatory and noncompact counterpart of Pila's mild parametrizations;
this somewhat answers the wish of~\cite[4.3 Remark 3]{Pi2006}.
 
One then obtains in Theorem \ref{fewratpoints} and Proposition \ref{fewratpointsgraph} general explicit
bounds for $\#X(\QQ,T)$ in a split form of a product of two factors, reflecting two features of different nature of $X$: one arising from the slowness
hypothesis; the other from how often our curve intersects algebraic curves of given degrees.


There are non-oscillatory unbounded functions of finite nonzero order; as examples, the function $2^x$ has order $1$ (indeed, the only rational points are $(k,2^k)$, $k\in\ZZ$), and
the restriction to the positive real line of the Euler gamma function $\Gamma$ has order at most $2$ (see \cite{BoJo15}) and at least $1$ (consider the values at positive integers and recall Stirling's formula).
The restriction to $]1,\infty[$ of the Riemann zeta function $\zeta$ has order at most $2$ (again see~\cite{BoJo15}), but it is not known if the order is positive.
Our methods are not confined to the oscillatory case; for illustrative purposes, we shall give alternate proofs of the finiteness of the orders of $\Gamma$ and $\zeta$ (see Sections \ref{RiemanZeta} and \ref{EulerGamma}), though our bounds are weaker than those already known.

\section{Counting rational points on curves.}\label{Scounting}

For $X\subseteq \RR^n$ and $T\ge 0$, we let $X(\QQ,T)$ denote the set of rational points in $X$ with height $\le T$.
%
Let $\varGamma\subset \RR^n$ be a parameterized curve, that is,
the image of $n$ smooth (i.e., infinitely differentiable) functions $\gamma=(f_1, \cdots, f_n) \colon I \to \RR^n$, for $I$ an interval
in $\RR$,
such that $\gamma(I)=\varGamma$.

The goal of this section is to provide, under some hypothesis
on the derivatives of the $f_i$, a bound for $\#\varGamma(\QQ,T)$.
Evidently, it is equivalent to provide such a bound on the projection
of $\varGamma$ onto some coordinate plane of $\RR^n$; from now on, we can assume without loss of generality that our forthcoming assumptions on the $f_i$ concern
only two coordinate functions among the $f_i$, or more conveniently, that $n=2$. Therefore, in the sequel we let
$\gamma=(f,g)$ be a parametrization of a given curve $\varGamma\subset \RR^2$.

For $N, L\in \RR^*$ and an interval $J\subset \RR$, we put
$$\varGamma_{N,L}:=\gamma([N,N+L])$$
$$\varGamma_{N,+\infty}:=\gamma([N,+\infty[,$$
$$\varGamma_J:=\gamma(J). $$

%

 \begin{lemm}\label{BoundAboveDet}
 Fix $\mu \in \mathbb{N^*}$. Let $I$ be an interval of length $L$.
Let $x_1$, \dots, $x_{\mu}$ be points in $I$ and
$\psi_1, \cdots, \psi_\mu$ be  $C^{\mu-1}$-functions from $I$ to $\RR$.
Set $$\Delta = \det (\psi_i (x_j)).$$
Then
$$\vert \Delta \vert \le  L^{\frac{\mu(\mu-1)}{2}}
\sum_{ s\in S(0,\cdots, \mu-1)}
 \sigma_{1,s(0)} \sigma_{2,s(1)}\cdots \sigma_{\mu,s(\mu-1)},$$
 where  $S(0,\cdots, \mu-1)$ is the permutation group of $\{0,\cdots, \mu-1\}$ and
 $$\sigma_{i,p}=\sup_{\xi\in I} \frac{1}{p !}   \vert \psi_i^{(p)}(\xi)\vert , \
p \in \{ 0,\cdots, \mu-1 \}.$$
 \end{lemm}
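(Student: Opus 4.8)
The plan is to expand the determinant along the standard permutation-sum expansion and then estimate each entry by a Taylor-type argument that extracts a power of $L$ from the difference of evaluation points. First I would fix a basepoint in $I$, say $x_0$ (any point of $I$ will do, for instance $x_1$ or the left endpoint), and for each $\psi_i$ write its Taylor expansion with remainder in Lagrange form centered at $x_0$:
$$
\psi_i(x_j)=\sum_{p=0}^{\mu-2}\frac{\psi_i^{(p)}(x_0)}{p!}(x_j-x_0)^p+\frac{\psi_i^{(\mu-1)}(\xi_{ij})}{(\mu-1)!}(x_j-x_0)^{\mu-1}
$$
for some $\xi_{ij}\in I$. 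The key observation is that this exhibits each row $(\psi_i(x_1),\dots,\psi_i(x_\mu))$ as a linear combination of the ``power vectors'' $v_p:=((x_1-x_0)^p,\dots,(x_\mu-x_0)^p)$ for $p=0,\dots,\mu-1$, where the coefficient of $v_p$ for $p\le\mu-2$ is the constant $\psi_i^{(p)}(x_0)/p!$, while the coefficient multiplying the top power $v_{\mu-1}$ varies with $j$. To keep multilinearity clean, I would instead argue directly on the determinant by column operations, or — more robustly — bound $|\Delta|$ by $\sum_{\sigma}\prod_i|\psi_i(x_{\sigma(i)})|$ is too lossy, so the right move is the following.

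The cleaner route is this. By multilinearity of $\det$ in the columns is not what we want either; rather, I would use multilinearity in the rows after substituting the Taylor expansion. Write $\psi_i(x_j)=\sum_{p=0}^{\mu-1} c_{i,p,j}(x_j-x_0)^p$ where for $p<\mu-1$ we have $c_{i,p,j}=\psi_i^{(p)}(x_0)/p!$ independent of $j$, and $c_{i,\mu-1,j}=\psi_i^{(\mu-1)}(\xi_{ij})/(\mu-1)!$. Then $\Delta=\det\bigl(\sum_p c_{i,p,j}(x_j-x_0)^p\bigr)_{i,j}$. Factoring $(x_j-x_0)^{?}$ out is awkward because the exponent depends on which power survives; the honest approach is to expand the determinant as a sum over functions $p\colon\{1,\dots,\mu\}\to\{0,\dots,\mu-1\}$ assigning to each row $i$ a power $p(i)$, giving
$$
\Delta=\sum_{p}\ \det\bigl(c_{i,p(i),j}(x_j-x_0)^{p(i)}\bigr)_{i,j},
$$
and a summand vanishes unless $p$ is injective, i.e. a bijection $\{1,\dots,\mu\}\to\{0,\dots,\mu-1\}$: indeed if $p(i)=p(i')$ with $i\ne i'$ and both powers are $<\mu-1$ the corresponding two rows are proportional (the entries factor as $c\cdot(x_j-x_0)^{p(i)}$ with $c$ constant in $j$), hence that determinant is zero; the case where the repeated value is $\mu-1$ requires a touch more care since then $c_{i,\mu-1,j}$ does depend on $j$, but one can preprocess by writing $c_{i,\mu-1,j}=c_{i,\mu-1,x_0}+(\text{something})$ — or simply note that only one row can legitimately be assigned the top power in a nonzero term after the argument is iterated. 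For each surviving bijection $p$, pull the scalar $c_{i,p(i),j}$ — whose absolute value is $\le\sigma_{i,p(i)}$ by definition of $\sigma_{i,p}$ — and note $|x_j-x_0|\le L$, so $\bigl|\det\bigl(c_{i,p(i),j}(x_j-x_0)^{p(i)}\bigr)\bigr|$ is bounded by the product $\prod_i\sigma_{i,p(i)}$ times $|\det((x_j-x_0)^{p(i)})_{i,j}|$, the latter a (generalized Vandermonde, here actually a plain Vandermonde up to row order) determinant whose entries have size $\le L^{p(i)}$; expanding it by permutations gives $\le\mu!\,L^{\sum_i p(i)}=\mu!\,L^{0+1+\cdots+(\mu-1)}=\mu!\,L^{\mu(\mu-1)/2}$. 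Summing over the $\mu!$ bijections $p$ and relabeling $p$ as $s\in S(0,\dots,\mu-1)$ via $s=p^{-1}$ (so $\prod_i\sigma_{i,p(i)}=\sigma_{1,s^{-1}(1)}\cdots$, matching $\sigma_{1,s(0)}\sigma_{2,s(1)}\cdots\sigma_{\mu,s(\mu-1)}$ after reindexing) yields the claimed bound, modulo the combinatorial constant.

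The main obstacle is producing the stated bound \emph{without} an extra factorial factor: the naive expansion above loses a $\mu!$ (or even $(\mu!)^2$) from the Vandermonde/permutation sums, whereas the lemma asks for a clean $L^{\mu(\mu-1)/2}\sum_s\prod\sigma$. To fix this I would avoid expanding the inner Vandermonde by brute force and instead perform column reduction first: subtract columns to reduce to a triangular-like form, or choose the basepoint $x_0=x_1$ and inductively clear the first column, so that at each stage the length factor $L$ comes out exactly once per off-diagonal power with no combinatorial multiplicity. Concretely, an induction on $\mu$: expand along the first row using the Taylor-at-$x_1$ representation, observe $\psi_i(x_1)=\psi_i(x_1)$ contributes the $p=0$ term cleanly and the higher terms carry factors $(x_j-x_1)$ that can be distributed one per column, reducing to the $(\mu-1)\times(\mu-1)$ case on the interval $I$ (still of length $\le L$) with the remaining $\sigma_{i,p}$. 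Tracking which $\sigma_{i,p}$ gets used in which step is exactly the bookkeeping encoded by the permutation $s$, and the exponents $0,1,\dots,\mu-1$ summing to $\mu(\mu-1)/2$ fall out of the induction. So the real content is organizing the Taylor expansion plus column operations so that no spurious constants appear; everything else is routine.
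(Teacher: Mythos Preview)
Your Taylor-at-a-basepoint strategy would give exactly the stated bound, with no extra factorial, \emph{if} all the coefficients $c_{i,p,j}$ were independent of $j$: for each bijection $p$ one would have $\det\bigl(c_{i,p(i)}(x_j-x_0)^{p(i)}\bigr)=\pm\bigl(\prod_i c_{i,p(i)}\bigr)V(x_1,\dots,x_\mu)$, and $|V|=\prod_{j<k}|x_k-x_j|\le L^{\mu(\mu-1)/2}$ by the product formula (no $\mu!$); the sum over bijections $p$ \emph{is} the sum over $s$ in the statement. So your worry about a stray factorial is misplaced. The actual gap is the one you flag and then wave away: the Lagrange remainder coefficient $c_{i,\mu-1,j}=\psi_i^{(\mu-1)}(\xi_{ij})/(\mu-1)!$ genuinely depends on $j$. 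Consequently (i) for the row $i$ with $p(i)=\mu-1$ you cannot factor $c_{i,\mu-1,j}$ out as a scalar, so the clean Vandermonde never emerges; and (ii) assignments $p$ sending two or more rows to $\mu-1$ do \emph{not} produce proportional rows and need not vanish. Your multilinear expansion therefore neither collapses to bijections nor yields the claimed per-term bound.

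The paper does not attempt this route. It simply invokes the classical Bombieri--Pila bound (\cite[Proposition~2]{BoPi1989}, see also \cite{Pi2006}): there exist points $\xi_{ij}\in I$ such that
\[
|\Delta|\ \le\ |V(x_1,\dots,x_\mu)|\cdot\Bigl|\det\Bigl(\tfrac{\psi_j^{(i-1)}(\xi_{ij})}{(i-1)!}\Bigr)_{i,j}\Bigr|,
\]
then bounds $|V|\le L^{\mu(\mu-1)/2}$ and expands the second determinant over permutations. That factorization is obtained by iterated column subtractions together with the mean value theorem (equivalently, divided differences: one reduces column $j$ to $\psi_i[x_1,\dots,x_j]=\psi_i^{(j-1)}(\xi)/(j-1)!$ for some $\xi\in I$), \emph{not} by a single Taylor expansion at a basepoint. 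Your final paragraph is in fact groping toward this correct mechanism (``subtract columns\dots\ at each stage the length factor $L$ comes out exactly once''), but that is a different argument from the multilinear-Taylor setup you built, and it is precisely the part that has to be written out.
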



\begin{proof}
We use the by-now classical bound for $\vert \Delta \vert$ (see \cite[Proposition $2$]{BoPi1989}, \cite{Pi2006}):
$$\vert \Delta \vert \le \vert  V(x_1, \cdots, x_\mu)  \vert
\cdot \vert \det \Big(\frac{\psi_j^{(i-1)}(\xi_{ij})}{(i-1)!} \Big)_{i,j=1, \cdots, \mu}  \vert,  $$
where $V$ is the Vandermonde determinant and $\xi_{ij}$ are points in $I$. The statement follows.

\end{proof}
In order to bound from above the determinant $\Delta$
we consider from now on  a specific bound condition on the derivatives of our parametrization $\gamma$.
This condition will allow for control of the maximal length $L$ of a parameter interval $[x,x+L]$ such that $\varGamma_{x,L}(\QQ,T)$ is contained in a single algebraic curve of given degree $d$ (see Proposition \ref{length}) and thus to eventually control the number of such algebraic curves needed to cover $\varGamma(\QQ,T)$ (see Proposition \ref{NumberIntervals}).

\begin{defn}\label{slowp}
Let $a>1$. We say that a smooth parametrization
$$\gamma=(f,g)\colon [a,+\infty[\to \RR^2$$
of a bounded curve $\varGamma$ is a {\bf slow
parametrization} of $\varGamma$ (with data $A,B,C, u$ and $b$) if:
\begin{enumerate}
\item there exist $u\in \RR$ and $b:[a,+\infty[\to \RR_+ $ decreasing to zero such that, for all $x\ge a$,
 $$ \vert f(x) -u \vert \le b(x);$$
\item there exist nonnegative real numbers $A,B,C$ such that for all $p\ge 0$ and $x\ge a$,
$$
\vert \frac{f^{(p)}(x)}{p!}  \vert \le \varphi_p(x),
\
\vert \frac{g^{(p)}(x)}{p!}  \vert \le \varphi_p(x),$$

where
$$\varphi_p(x)=\Big( Ap^B\frac{\log^{C}x}{x} \Big)^p.$$

\end{enumerate}
We call a function satisfying condition $(2)$ above a {\bf slow function}, and say that the set of constants $A,B,C$ is {\bf attached} to the function, or attached to
 the slow parametrization $\gamma$.
 \end{defn}

\begin{remark}\label{re: f and g bounded by 1}
In full generality one should rather define $\varphi_p$ as
\begin{equation}\label{eq: full definition of slow algebra}
   \varphi_p(x)=D\Big( Ap^B\frac{\log^{C}x}{x} \Big)^p
\end{equation}
for some real number $D$ (as in the introduction) in order to have stability by addition for the set of slow functions, as pointed out in Remark \ref{re:AlgeraSlow} below.
But the condition
$f\le 1, g\le 1$, made for simplicity in forthcoming computations, is made without loss of generality, up to dividing $f$ and $g$ by an integer bounding $f$ and $g$, which is harmless for counting rational points.
\end{remark}

\begin{remark}\label{re: on the limit point of the curve 1}
In our Definition \ref{slowp} the first coordinate $f$ of a slow parametrization $\gamma$ goes to $u$ as the parameter goes to infinity, at speed at least $b$. In the sequel, for applications we will often consider the case
$b(x)=1/x^E$, with $E>0$, and
the Diophantine properties of $u$ have to be considered
for counting rational points on $\varGamma$; see Remark \ref{re: on the limit point of the curve 2} $(4)$.
\end{remark}

\begin{remark}\label{1/f}
If a function $k$ (bounded from above or not) is greater than $1$ and satisfies
condition $(2)$ of Definition \ref{slowp} for $p\ge 1$, then $1/k$ is slow.
\end{remark}

\begin{remark}\label{re:AlgeraSlow}
For $a>1$, let us denote by $S([a,+\infty[)$
the set of slow functions defined on
$[a,+\infty[$
(with the bounding function $\varphi_p$ as in (\ref{eq: full definition of slow algebra}),
and with possible different data $A,B,C,D, u, b$ for each of them).
An easy computation,
essentially based on the formula (\ref{derprod}) given below,  shows that  $S([a,+\infty[)$ is a subalgebra  of
$C^\infty([a,+\infty[)$, stable under derivation.
\end{remark}

\begin{remark}\label{constantssmallorbig}
For a given slow parametrization, we can always assume that $A,B,C$ are large enough,  up to quantitatively weakening the condition of being a slow parametrization.
The condition that the parameter has to be larger than $a$  in this definition is technical. On one hand we are essentially interested in the part of the curve corresponding to a neighbourhood of infinity of the parameter, because this part of the curve is the oscillatory part when the parametrization is analytic.
On the other hand, for analytic curves ${\varGamma}$ restricted to a compact interval of parameters, since we have a mild parametrization  (see \cite{Pi2006}), one gets a bound
for $\#{\varGamma}(\QQ,T)$ as given in Theorem \ref{fewratpoints}, depending on how much $\Gamma$ cuts algebraic curves of given degree, or at least we have a general sub-polynomial bound for $\#{\varGamma}(\QQ,T)$,
provided by \cite{PiWi2006} for the o-minimal context.
\end{remark}

\begin{remark}
The bound $(2)$ in Definition \ref{slowp} may be seen as the unbounded version of the mild para\-me\-tri\-za\-tion introduced in~\cite{Pi2006}. If an analytic function $g \colon \RR_+\to \RR$ satisfies the bound $(2)$ of Definition \ref{slowp}
with $B=0$,
then there is a complex analytic continuation of $g$ to the half plane $\{x\in \CC, \Re(z)>0\}$, since the radius of convergence of the Taylor series of $g$ at $x$ is greater than
$\displaystyle \frac{x}{A\log^C x}$.
\end{remark}
\begin{remark}
A slow parametrization may also be thought of as the oscillatory version of Yomdin-Gromov parametrization for o-minimal sets (see Gromov~\cite{Gr86}, Yomdin~\cite{Yo87} and~\cite{PiWi2006}), since a slow parametrization will provide, for fixed $T$ and $d$, a finite number $n(T)$ of intervals $I$ such that $ \varGamma_I(\QQ,T)$ is contained in a single algebraic curve of degree $d$.
In the o-minimal case, since the number $n(T)$ given by a Yomdin-Gromov parametrization is not controlled, one cannot expect
better bounds for $\#\varGamma(\QQ,T)$ than $\alpha_{\epsilon} T^{\epsilon}$ (for any $\epsilon>0$).
Contrariwise, here,  the fast decay   on the derivatives imposed by a slow parametrization will provide a sufficiently small number $n(T)$ (Lemma \ref{NumberIntervals}) that will in turn imply that $\varGamma$ has finite order,  provided that good enough B\'ezout bounds (Definition~\ref{de: transcendental and Bezout bound}) are available (Theorem \ref{fewratpoints}).
It is worth noting that, starting with a given slow parametrization, the constants $\alpha$ and $\beta$ in our bound $\alpha \log^\beta T$ for $\#\varGamma(\QQ,T)$ in any of Theorem \ref{fewratpoints},  Propositions \ref{fewratpointsLogSpiral} or \ref{fewratpointsgraph} will depend explicitly on the data $A,B,C$ and $b$, though the density of rational points in the set $\varGamma$ should be independent of any choice of parametrization. This obvious constraint theoretically explains  why, in practice, the range of slow parametrizations, and reparametrizations that improve the decay of a given slow parametrization or the decay of $b$, is necessarily limited. When one composes a slow parametrization with a smooth increasing bijection
$\sigma \colon [a',+\infty[\to [a,+\infty[$ having smaller and smaller derivatives as $x$ goes to infinity (e.g., $x\mapsto \log x$), the
decay of the derivatives of the reparametrization should improve. But at the same time, since $\sigma$ is contracting the distance and the slow parametrization we started with has smaller and smaller derivatives as $x$ gets larger and larger, this effect is counterbalanced.  It thus appears that any given slow parametrization is a quite stable and optimal form of parametrization with respect to reparametrization and counting rational points of bounded height in a curve.
\end{remark}

We assume in what follows that $\gamma=(f,g)$ is a slow parametrization of the curve $\varGamma$.
Now let us choose $d\in \NN^*$ and let us denote $(\alpha_1,\alpha_2)\in \NN^2$  by $\alpha$ and
 the function $f^{\alpha_1}g^{\alpha_2}$ by $\gamma^\alpha$.

In the following lemmas, we will use the classical formula
\begin{equation}\label{derprod}
\frac{(f^\alpha g^\beta)^{(p)}}{p!}
=\sum_{i_1+ \cdots i_\alpha +j_1+\cdots + j_\beta =p}
\frac{ f^{(i_1)}\cdots f^{(i_\alpha)}g^{(j_1)}\cdots g^{(j_\beta)}}{i_1 ! \cdots i_\alpha ! j_1 ! \cdots j_\beta !},\quad \alpha,\beta,p\in\NN.
\end{equation}

\begin{lemm}\label{BoundAboveDer} Let $\gamma$
be a slow parametrization of a plane curve
$\varGamma$,  with constants $A,B,C$.
Then for any $\alpha\in \NN^2$ and $p\in \NN$,
\begin{equation*}\frac{\vert (\gamma^\alpha)^{(p)}(x)\ \vert}{p!} \le (p+1)^{\alpha_1+\alpha_2}
A^{p} p^{Bp} \frac{\log^{Cp}x}{x^p}.\qedhere
\end{equation*}
\end{lemm}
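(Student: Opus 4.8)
The plan is to apply the product formula \eqref{derprod} directly to $\gamma^\alpha = f^{\alpha_1}g^{\alpha_2}$ and estimate each summand using the slowness bounds $\vert f^{(p)}(x)/p!\vert \le \varphi_p(x)$ and $\vert g^{(p)}(x)/p!\vert \le \varphi_p(x)$ from Definition~\ref{slowp}(2). First I would write, with $\alpha = \alpha_1$, $\beta = \alpha_2$,
\[
\frac{\vert(\gamma^\alpha)^{(p)}(x)\vert}{p!}
\le \sum_{i_1+\cdots+i_{\alpha_1}+j_1+\cdots+j_{\alpha_2}=p}
\varphi_{i_1}(x)\cdots\varphi_{i_{\alpha_1}}(x)\,\varphi_{j_1}(x)\cdots\varphi_{j_{\alpha_2}}(x).
\]
The key observation is that $\varphi_k(x) = (Ak^B\log^C x/x)^k$ is multiplicative-friendly in the sense that each factor $\varphi_{k}(x)$ with exponent summing to $p$ contributes $A^k (\log^C x/x)^k$ times $k^{Bk}$; since $\sum k = p$, the product of the $A^k(\log^Cx/x)^k$ terms telescopes exactly to $A^p(\log^Cx/x)^p$. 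So the main task is to bound $\prod k^{Bk}$ over a composition of $p$ and to count the number of terms in the sum.

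Second, I would handle the two remaining pieces. For the count: the number of tuples $(i_1,\dots,i_{\alpha_1},j_1,\dots,j_{\alpha_2})$ of nonnegative integers summing to $p$ is $\binom{p+\alpha_1+\alpha_2-1}{\alpha_1+\alpha_2-1} \le (p+1)^{\alpha_1+\alpha_2-1}$, which is already within the claimed $(p+1)^{\alpha_1+\alpha_2}$. For the factor $\prod_{r} k_r^{Bk_r}$ over a composition $k_1+\cdots+k_{\alpha_1+\alpha_2}=p$ (reading the $i$'s and $j$'s as a single list $k_1,\dots$): since each $k_r \le p$, we have $k_r^{Bk_r} \le p^{Bk_r}$ (interpreting $0^0=1$), hence $\prod_r k_r^{Bk_r} \le p^{B\sum_r k_r} = p^{Bp}$. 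Combining, each summand is at most $A^p p^{Bp}(\log^C x/x)^p$, and there are at most $(p+1)^{\alpha_1+\alpha_2-1}$ of them, giving the bound $(p+1)^{\alpha_1+\alpha_2}A^p p^{Bp}\log^{Cp}x/x^p$ with room to spare.

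The step I expect to require the most care is the elementary inequality $k^{Bk}\le p^{Bk}$ when $k=0$ and, more generally, making the convention $\varphi_0(x) = 1$ (consistent with $(A\cdot 0^B \cdots)^0 = 1$) transparent so that summands with some $i_r$ or $j_r$ equal to zero are handled cleanly; one must note that for such terms the corresponding factor $k^{Bk}$ is $0^0 = 1 \le p^{0} = 1$, so the estimate still goes through. Everything else is bookkeeping: the exact telescoping of the $A^k$ and $(\log^C x/x)^k$ factors across a composition of $p$, and the crude binomial count of compositions. No obstacle of substance arises; the lemma is a direct consequence of \eqref{derprod} together with Definition~\ref{slowp}.
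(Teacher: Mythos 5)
Your proof is correct and follows exactly the same route as the paper's: expand $(\gamma^\alpha)^{(p)}/p!$ via formula (\ref{derprod}), observe that the $A^{k}(\log^C x/x)^{k}$ factors telescope to $A^p(\log^C x/x)^p$ over any composition of $p$, bound $\prod_r k_r^{Bk_r}\le p^{Bp}$, and count the compositions. Your count $(p+1)^{\alpha_1+\alpha_2-1}$ is a hair sharper than the paper's $(p+1)^{\alpha_1+\alpha_2}$, and your explicit handling of the $0^0$ convention is a nice touch, but the argument is the same.
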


(Hence, $\gamma^\alpha \in S([a,+\infty[)$.)

\begin{proof}
By formula (\ref{derprod}),
$$
\frac{\vert (\gamma^\alpha)^{(p)}(x) \vert}{p!}
\le \sum_{i_1+ \cdots i_{\alpha_1} +j_1+\cdots + j_{\alpha_2} =p}
A^p {i_1}^{B i_1} \cdots {i_{\alpha_1}}^{B i_{\alpha_1}}
  {j_1}^{B j_1} \cdots  {j_{\alpha_2}}^{B j_{\alpha_2}} \frac{\log^{Cp} x}{x^p}$$
$$
\le  (p+1)^{\alpha_1+\alpha_2}
A^{p} p^{Bp} \frac{\log^{Cp} x}{x^p}.$$

\end{proof}
We now recall another bound that will be used in the proof of Proposition \ref{BoundAboveGeneral}.
\begin{lemm}\label{majPi2006} For any $\mu \ge 1$ and $B>0$,
$$\prod_{r=0}^{\mu-1} r^{B r}\le \mu^{B\frac{\mu(\mu-1)}{2}}.$$
\end{lemm}

\begin{proof}
We use the classical bound for such a product (see also \cite[Proposition 2.2]{Pi2006}).
We have
\begin{align*}
\log(\prod_{r=0}^{\mu-1} r^{Br})
&=B \sum_{r=1}^{\mu-1} r\log r \le B\int_{1}^{\mu}t\log t \ dt\\
&\le B\frac{\mu(\mu-1)\log \mu}{2}.\qedhere
\end{align*}
\end{proof}
Let us now fix some integer $d\ge 1$ and set
$$\mu=\frac{(d+2)(d+1)}{2}=\# \{\alpha \in \mathbb{N}^2 : \abs{\alpha}  \leq d \}, \ \ \rho=\frac{\mu(\mu-1)}{2},$$
 and for
$\mu$ points $x_1,\cdots, x_\mu$ given in the domain of definition of $\gamma$,
consider the determinant
$$\Delta=\det(\gamma^\alpha(x_j))_{\alpha\in \Delta_2(d), j\in \{1, \cdots, \mu\}}.$$

\begin{prr}\label{BoundAboveGeneral}
 Let $\gamma$
be a slow parametrization of a plane curve
$\varGamma$.
With the above notation,
for any $d\ge 1$ there exist $N=N(C)\ge 1 $  and $C(d,A,B)>0$
such that, for any $L>0$ and $x_1, \cdots, x_\mu\in [N; N+L]$,
$$\vert \Delta \vert \le C(d,A,B)L^\rho \frac{\log^{ C\rho} N}
{N^ \rho}.$$
Furthermore, one can take
$C(d,A,B)=\mu! A^\rho \mu^{d(\mu-1)} \mu^{B\rho}$.
\end{prr}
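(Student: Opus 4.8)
The plan is to apply Lemma~\ref{BoundAboveDet} to the functions $\psi_i = \gamma^{\alpha}$ (one for each multi-index $\alpha$ with $\abs{\alpha}\le d$, so $\mu$ of them), on the interval $I=[N,N+L]$, and then feed in the derivative estimate from Lemma~\ref{BoundAboveDer}. Concretely, Lemma~\ref{BoundAboveDet} gives
$$
\vert \Delta\vert \le L^{\rho}\sum_{s\in S(0,\dots,\mu-1)}\ \prod_{k=1}^{\mu}\sigma_{k,s(k-1)},
$$
where $\sigma_{k,p}=\sup_{\xi\in I}\frac{1}{p!}\vert(\gamma^{\alpha_k})^{(p)}(\xi)\vert$ and $\alpha_1,\dots,\alpha_\mu$ enumerates $\{\alpha:\abs\alpha\le d\}$. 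So the main task is to bound each product $\prod_{k=1}^{\mu}\sigma_{k,s(k-1)}$, uniformly in the permutation $s$, and then multiply by $\mu!$ for the number of permutations.

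First I would record, from Lemma~\ref{BoundAboveDer}, that for $\xi\in[N,N+L]$ and any $p$,
$$
\frac{\vert(\gamma^{\alpha_k})^{(p)}(\xi)\vert}{p!}\le (p+1)^{\alpha_{k,1}+\alpha_{k,2}}A^{p}p^{Bp}\frac{\log^{Cp}\xi}{\xi^{p}}\le (p+1)^{d}\,A^{p}p^{Bp}\,\frac{\log^{Cp}\xi}{\xi^{p}},
$$
using $\abs{\alpha_k}\le d$. The factor $\log^{Cp}\xi/\xi^{p}$ is the delicate one: I want to replace it by $\log^{Cp}N/N^{p}$. The function $x\mapsto \log^{C}x/x$ is eventually decreasing (its derivative is $\log^{C-1}x\,(C-\log x)/x^{2}$, negative once $\log x>C$), so there is $N=N(C)\ge 1$ such that for all $x\ge N$ we have $\log^{C}x/x\le \log^{C}N/N$; hence $\log^{Cp}\xi/\xi^{p}\le \log^{Cp}N/N^{p}$ for all $\xi\ge N$ and all $p$. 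This is where the hypothesis "$N=N(C)\ge1$" comes from, and it is essentially the only nontrivial analytic point. So $\sigma_{k,p}\le (p+1)^{d}A^{p}p^{Bp}\log^{Cp}N/N^{p}$.

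Now fix a permutation $s$ and write $p_k=s(k-1)$, so that $(p_1,\dots,p_\mu)$ is a permutation of $(0,1,\dots,\mu-1)$. Then
$$
\prod_{k=1}^{\mu}\sigma_{k,p_k}\le \Big(\prod_{k=1}^{\mu}(p_k+1)^{d}\Big)\,A^{\sum p_k}\,\Big(\prod_{k=1}^{\mu}p_k^{Bp_k}\Big)\,\frac{\log^{C\sum p_k}N}{N^{\sum p_k}}.
$$
Since $\{p_1,\dots,p_\mu\}=\{0,\dots,\mu-1\}$, we have $\sum p_k=\rho$, $\prod(p_k+1)=\mu!\le \mu^{\mu}\le \mu^{\mu-1}\cdot\mu$ — more usefully, $\prod_{k=1}^\mu (p_k+1)=\prod_{j=1}^{\mu}j=\mu!\le\mu^{\mu}$, so $\prod(p_k+1)^d\le \mu^{d\mu}$; and I should be slightly more careful to land on the stated constant $\mu^{d(\mu-1)}$. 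Here note $\prod_{k=1}^\mu(p_k+1)=\mu!$, and one can bound $\prod(p_k+1)^d$ more tightly, but a clean route is: $(p_k+1)\le \mu$ for each $k$ and one of the $p_k$ equals $0$ so $(p_k+1)=1$ there, giving $\prod(p_k+1)^d\le \mu^{d(\mu-1)}$. Next, $\prod_{k=1}^{\mu}p_k^{Bp_k}=\prod_{r=0}^{\mu-1}r^{Br}\le \mu^{B\mu(\mu-1)/2}=\mu^{B\rho}$ by Lemma~\ref{majPi2006}. Putting these together,
$$
\prod_{k=1}^{\mu}\sigma_{k,p_k}\le \mu^{d(\mu-1)}\,A^{\rho}\,\mu^{B\rho}\,\frac{\log^{C\rho}N}{N^{\rho}},
$$
uniformly in $s$. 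Summing over the $\mu!$ permutations and reinstating the $L^{\rho}$ from Lemma~\ref{BoundAboveDet}:
$$
\vert\Delta\vert\le \mu!\,A^{\rho}\mu^{d(\mu-1)}\mu^{B\rho}\;L^{\rho}\,\frac{\log^{C\rho}N}{N^{\rho}},
$$
which is exactly the asserted bound with $C(d,A,B)=\mu!\,A^{\rho}\mu^{d(\mu-1)}\mu^{B\rho}$. The only real obstacle is the monotonicity step fixing $N(C)$; everything else is bookkeeping with the two combinatorial lemmas already in hand.
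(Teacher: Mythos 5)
Your proof is correct and follows essentially the same route as the paper's: apply Lemma~\ref{BoundAboveDet} with $\psi_i=\gamma^{\alpha_i}$, bound the $\sigma_{k,p}$ via Lemma~\ref{BoundAboveDer}, use the eventual monotonicity of $x\mapsto\log^C x/x$ (with $N(C)=e^C$) to replace $\xi$ by $N$, bound $\prod(p_k+1)^d$ by $\mu^{d(\mu-1)}$, and invoke Lemma~\ref{majPi2006} for $\prod r^{Br}\le\mu^{B\rho}$ before summing over the $\mu!$ permutations.
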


\begin{proof}
Let us first note that
for any $p=0,\cdots, \mu-1$,
 $\log^{C p}x/x^{p}$ is decreasing on $[N,+\infty[$ with, for instance  $N=N(C)=e^C$.
 Combining Lemma \ref{BoundAboveDet}, Lemma \ref{BoundAboveDer} and these two remarks, one obtains for $x_1,\cdots, x_\mu\in [N,N+L]$
$$\vert \Delta \vert
\le   L^{\frac{\mu(\mu-1)}{2}}  \sum_{ s\in S(0,\cdots, \mu-1) }
A^{\sum_{j=0}^{\mu-1} j}
\prod_{r=0}^{\mu-1} r^{Br}
\prod_{r=0}^{\mu-1} (r+1)^d
\frac{\log^{ C\sum_{j=0}^{\mu-1} j} N }{ N^{\sum_{j=0}^{\mu-1}j}}. $$
$$\le A^\rho \mu^{d(\mu-1)} L^{\rho}
\frac{\log^{ C\rho} N}{ N^\rho}
\sum_{ s\in S(0,\cdots, \mu-1) }
\prod_{r=0}^{\mu-1} r^{Br}.
$$
Now using Lemma \ref{majPi2006}, we have
\[
\vert \Delta \vert
\le \mu! A^\rho \mu^{d(\mu-1)} \mu^{B\rho}L^{\rho}
\frac{\log^{ C\rho}N}{N^\rho}.\qedhere
\]
\end{proof}
Let us now fix $T\ge 1$ and
  assume that the points $x_1, \cdots, x_\mu\in [N,N+L]$, with the notation of Proposition \ref{BoundAboveGeneral}, are such
that $(f(x_j),g(x_j))$, $j=1, \cdots, \mu$, is a pair of rational points of height $\le T$. In this case, if $\ \Delta\not=0$ we have,
$$ \vert  \Delta \vert
\prod_{j=1, \cdots, \mu } \vert a_j^db_j^d\vert  \ge 1,$$
 where $a_j$ and $b_j$ are the denominator of $f(x_j)$ and $g(x_j)$.
But since
$$\prod_{  j=1, \cdots, \mu } \vert a_j^d  b_j^d \vert \le T^{2d\mu }   ,$$
we also have, again if $\ \Delta\not=0$,
$$T^{2d\mu } \vert   \Delta \vert \ge 1.$$
Now, considering Proposition \ref{BoundAboveGeneral}, as soon as
\begin{equation}
   T^{2d\mu}C(d,A,B)L^\rho \frac{\log^{ C\rho} N}{N^\rho} <1
\label{VanishingCondition}
\end{equation}
 we necessarily have $\Delta=0$. Let us fix $L$ as
\begin{equation}
L:=L(d,A,B,T,N)=C'(d,A,B) \frac{N}{\log^C N} T^{\frac{-4d}{\mu-1}},
\label{ChoiceOfL}
\end{equation}
with $C'(d,A,B)=C(d,A,B)^{\frac{-1}{\rho}}$.
It follows that for any  interval $[N;N+L[$, with $N\ge N(C)$  given by Proposition \ref{BoundAboveGeneral},
and  $L$ given by equation (\ref{ChoiceOfL}), for any choice of points $x_1, \cdots, x_\mu \in [N;N+L[$
and such that $\gamma(x_j)\in \varGamma_{N,L}(\QQ,T)$, the rank of $(\gamma^\alpha(x_j))_{\alpha\in \Delta_2(d), j=1,\cdots, \mu, }$ is
$\le\mu-1$.

We now proceed similarly to  Lemma 1 of \cite{BoPi1989}.
 Let $a$ be the maximal
rank of $(\gamma^\alpha(x_j))$ over all $x_1,\cdots, x_\mu
\in [N;N+L[$ such that  $\gamma(x_j)\in \varGamma_{N,L}(\QQ,T)$ and
let $M=(\gamma^\alpha(x_j))_{\alpha\in I, j\in \{1, \cdots, a\}}$
be of rank $a$, for some fixed $x_1,\cdots, x_a\in [N;N+L[$
 such that  $\gamma(x_j)\in \varGamma_{N,L}(\QQ,T)$ and some  $I\subset \{\alpha\in \NN^2, \vert \alpha \vert\le d\}$ of cardinality $a$.
Since $a<\mu$, we can choose $\beta=(\beta_1,\beta_2)\in \Delta_2(d)\setminus I$.

Let us denote by $f(y)\in \RR[y]$ the determinant of the matrix
$$(M' y^\delta)_{\delta\in I\cup \{\beta\}},$$ where
$y=(y_1, y_2)$,  $ y^\delta=y_1^{\delta_1}y_2^{\delta_2}$,
and $M'$ is $M$ augmented
by the line $\gamma^\beta(x_j)_{j\in \{1, \cdots, a\}}$.
Then $f(y)$ is a polynomial with real coefficients that is not zero, since the coefficient of the monomial
$y^\beta$ in $f(y)$ is
the nonzero minor $\det(M)$, and the degree of $f(y)$ is at most $d$.
Furthermore for any $(x_1,x_2)\in \varGamma_{N,L}(\QQ,T)$ we have
$f(x_1,x_2)=0$, by definition of the maximal rank $a$.

Note that in case there are fewer than $\mu$ points $x$ in $[N,N+L]$
such that $\gamma(x)\in \varGamma_{N,L}(\QQ,T)$, those points $\gamma(x)$
are certainly in some algebraic curve of degree less than $d$, since the dimension of the space of polynomials of $\RR[y]$ of degree at most $d$ is $\mu$.

To sum up this discussion, we have proved the following statement.
\begin{prr}\label{length}
 Let $\gamma$
be a slow parametrization of a plane curve
$\varGamma$.
Let $d\in \NN^*$ and $T\ge 1$. There exists $N(C)\ge 1$ such that for any $N\ge N(C)$ exists  $L=L(d,A,B,T,N)$, such that the set
  $\varGamma_{N,L}(\QQ,T)$
is contained in an algebraic curve of $\RR^2$ of degree at most  $d$.  Furthermore one can take
$L=C'(d,A,B)\frac{ N}{\log^C N}T^{-\nu}$, with
 $\nu=\frac{4d}{\mu-1}$ and
$C'(d,A,B)=A^{-1}\mu^{-B} \mu^{\frac{-2d}{\mu}} (\mu!)^{\frac{-2}{\mu(\mu-1)}}$.
\end{prr}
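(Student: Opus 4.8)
The plan is to assemble Proposition~\ref{length} directly from the three ingredients already developed: the Vandermonde/derivative determinant bound of Lemma~\ref{BoundAboveDet}, the slow-growth estimate on $\gamma^\alpha$ of Lemma~\ref{BoundAboveDer}, and the product estimate of Lemma~\ref{majPi2006}, packaged together in Proposition~\ref{BoundAboveGeneral}. The core observation is the standard one from the determinant method of \cite{BoPi1989}: if $\mu$ rational points of height $\le T$ lie on $\varGamma_{N,L}$ and the associated monomial determinant $\Delta=\det(\gamma^\alpha(x_j))$ is nonzero, then clearing denominators forces $|\Delta|\cdot\prod_j|a_j^d b_j^d|\ge 1$, whence $T^{2d\mu}|\Delta|\ge1$; so once the upper bound from Proposition~\ref{BoundAboveGeneral} makes $T^{2d\mu}|\Delta|<1$, we are forced to conclude $\Delta=0$, i.e. the $\mu\times\mu$ monomial matrix is rank-deficient.

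First I would fix $d$ and $T$, set $N(C)=e^C$ as in Proposition~\ref{BoundAboveGeneral} so that all the relevant functions $\log^{Cp}x/x^p$ are decreasing on $[N,+\infty[$, and for $N\ge N(C)$ invoke Proposition~\ref{BoundAboveGeneral} to get $|\Delta|\le C(d,A,B)L^\rho \log^{C\rho}N/N^\rho$ for any $x_1,\dots,x_\mu\in[N,N+L]$. Then I would impose the vanishing condition \eqref{VanishingCondition}, namely $T^{2d\mu}C(d,A,B)L^\rho\log^{C\rho}N/N^\rho<1$, and solve it for $L$: using $\rho=\mu(\mu-1)/2$ and $2d\mu/\rho=4d/(\mu-1)$, the critical length is exactly $L=L(d,A,B,T,N)=C(d,A,B)^{-1/\rho}\,(N/\log^C N)\,T^{-4d/(\mu-1)}$, which is the formula claimed (the constant $C'(d,A,B)=C(d,A,B)^{-1/\rho}=A^{-1}\mu^{-B}\mu^{-2d/\mu}(\mu!)^{-2/(\mu(\mu-1))}$ after substituting the explicit $C(d,A,B)=\mu!A^\rho\mu^{d(\mu-1)}\mu^{B\rho}$ and simplifying the exponents).

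Next I would run the rank-descent argument exactly as in Lemma~1 of \cite{BoPi1989}, which is already spelled out in the text preceding the statement: let $a<\mu$ be the maximal rank of $(\gamma^\alpha(x_j))$ over all admissible choices of points in $[N,N+L[$ with $\gamma(x_j)\in\varGamma_{N,L}(\QQ,T)$, fix a witnessing $a\times a$ submatrix $M$ with column index set $I$, pick $\beta\in\Delta_2(d)\setminus I$, and form $f(y)=\det\bigl((M'\,y^\delta)_{\delta\in I\cup\{\beta\}}\bigr)$ with $M'$ the augmentation of $M$ by the row $\gamma^\beta$. This $f\in\RR[y]$ has degree $\le d$, is not identically zero because the coefficient of $y^\beta$ is the nonzero minor $\det M$, and vanishes at every point of $\varGamma_{N,L}(\QQ,T)$ by maximality of $a$; hence that rational point set lies on an algebraic curve of degree $\le d$. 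The degenerate case, when there are fewer than $\mu$ such rational points, is handled by the dimension count: $<\mu$ prescribed values can always be interpolated by a nonzero polynomial of degree $\le d$ since that space has dimension $\mu$.

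I do not expect a genuine obstacle here — the statement is essentially a repackaging of Proposition~\ref{BoundAboveGeneral} plus the classical determinant-method descent, and all the quantitative inputs are in hand. The one point requiring care is purely bookkeeping: verifying that the exponents match up when one extracts $L$ from the strict inequality \eqref{VanishingCondition}, in particular that $C(d,A,B)^{-1/\rho}$ really does simplify to the stated $A^{-1}\mu^{-B}\mu^{-2d/\mu}(\mu!)^{-2/(\mu(\mu-1))}$ (using $\rho=\mu(\mu-1)/2$, so $1/\rho=2/(\mu(\mu-1))$, $\mu^{d(\mu-1)/\rho}=\mu^{2d/\mu}$, and $A^{\rho/\rho}=A$), and that $\nu=2d\mu/\rho=4d/(\mu-1)$. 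I would also remark, as the text does, that replacing $\varGamma_{N,L}(\QQ,T)$ "contained in" a strict inequality with a half-open interval $[N,N+L[$ is harmless, and that one may freely enlarge $A,B,C$ (Remark~\ref{constantssmallorbig}) so that the stated $N(C)=e^C$ and the decay hypotheses hold simultaneously.
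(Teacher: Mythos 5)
Your proposal is correct and follows essentially the same route as the paper: the paper's "proof" of Proposition~\ref{length} is precisely the discussion spanning the vanishing condition~(\ref{VanishingCondition}), the choice of $L$ in~(\ref{ChoiceOfL}), and the rank-descent construction of the auxiliary polynomial, which is what you reproduce, and your bookkeeping check that $C(d,A,B)^{-1/\rho}$ simplifies to the stated $C'(d,A,B)$ and that $\nu=2d\mu/\rho=4d/(\mu-1)$ is accurate.
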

So far, we have introduced the notion of slow parametrization and given bounds for such parametrization, like in Proposition \ref{length}. This is in order to eventually bound the number of rational points of bounded height in the range $\varGamma\subset \RR^2$ of  the slow parametrization we consider.
At this point, one may have the naive idea that the slower this parametrization is (that is the smaller $A,B$ and $C$ are), the better this bound should be, since the slower our parametrization is, the larger the length $L$ provided by Proposition  \ref{length} is, and thus the smaller the number $n(T)$ of intervals $I$ such that $\varGamma_I(\QQ,T)$ is contained in one algebraic curve of given degree is.
But of course the density in $\varGamma$ of rational points of given height does not depend on the parametrization of $\varGamma$,
that maybe could be taken slower than it is, up to a convenient reparametrization of the half line by itself. There is no contradiction here. In fact such a reparametrization
also increases the time needed to parametrize the rational points
of height less then $T$, and this growth balances the gain obtained on the
bounds of the derivatives after reparametrization.
Nevertheless, we believe that this balancing effect must be made visible in our final bound, with the hope that one can, in a particular situation, optimize this balance. For this goal one introduces a control of the speed at which a slow parametrization, or a suitable change of variables made to obtain a slower parametrization, runs through the rational points of given bounded height. On the other hand, so far, we have not mentioned that in order
to obtain a bound on $\#\varGamma(\QQ,T)$ from assumptions on a parametrization of $\varGamma$, one has to exclude some noninjective
behaviour of the parametrization under consideration: passing too many times by the same rational point leaves no possibility to bound
$\#\varGamma(\QQ,T)$ throughout a parametrization.
The next definition proposes such a notion of quantitative control on the time needed to go through all the points of given bounded height as well providing a quantitative control of, let us say, the noninjectivity of our parametrization. The better is this control, called a height control function, the better will be our final bounds. It is thus worth considering as a separate additional assumption such a control function and state our bounds in terms of this given height control function.

\begin{defn}\label{defheightcontrolfunction}
Let $\gamma \colon [a,+\infty[\to \RR^2$
be a parametrization of a plane curve
$\varGamma$. We say that a function $\varphi \colon [a,+\infty[\to \RR$ is a {\bf height control function}  for $\gamma$ if, for all $T\ge 1$,
$$\gamma^{-1}(\varGamma(\QQ,T))\subset [a,\varphi(T)].$$
\end{defn}

\begin{remark}\label{hcf}
In the following particular cases, one can easily compute a height control function of $\gamma=(f,g)$. The preliminary observation is useful: for $a,p\in \QQ$, $b,q\in \QQ^*$,  $a/b-p/q =0$ or $\vert a/b-p/q \vert \ge K/\vert q \vert$, for $K=1/\vert b \vert $.
\begin{enumerate}
\item[(1)]\label{rk:hcf1}
In case that $f$ and $g$
are decreasing  respectively to $u,v\in \QQ$, a height control function of $\gamma$
is given by $\varphi(T)=\min( f^{-1}(\frac{K}{T}),g^{-1}(\frac{K}{T}) )$, for some $K>0$.
\item[(2)]\label{rk:hcf2}
If $u,v\in \QQ$ and $\vert f -u \vert$ and $\vert g -v \vert$ are respectively
bounded from above by functions $b$ and $c$ that decrease to $0$,  a height control function of $\gamma$
is given by $\varphi(T)=\max( b^{-1}(\frac{K}{T}),c^{-1}(\frac{K}{T}) )$, for some $K>0$, as soon as $f-u$ and $g-v$ have no common zero on $[a,+\infty[$ (note that contrariwise to
$(1)$, in this case the functions $f-u$ or $g-v$ may   have zeroes between
$\min( b^{-1}(\frac{K}{T}),c^{-1}(\frac{K}{T}) )$
and $\max( b^{-1}(\frac{K}{T}),c^{-1}(\frac{K}{T}) )$).
\item[(3)]\label{rk:hcf3}
In case $u \in \QQ$, any decreasing  function $b \colon [a,+\infty[\to \RR$ going to $0$ and
bounding  $\vert f-u \vert$
gives rise to a height control function of $\gamma$ defined by $\varphi(T)=b^{-1}(\frac{K}{T})$, for some $K>0$, as soon as $f$
does not take the value $u$.
In particular,
when $\gamma$ is a slow parametrization as in Definition
\ref{slowp}, with $u\in \QQ$ and $b(x)=1/x^E$, $E>0$, one can take $\varphi(T)=T^{\frac{K}{E}}$, for some $K>0$, as soon as $f$ does not take the value $u$ on $[a,+\infty[$.

\item[(4)]\label{re: on the limit point of the curve 2}
In case $u\not \in \QQ$, let us denote by $ \tau $ an {\bf irrationality measure function} of $u$, that is a function such that  for any $p,q\in \NN$ with height less than $T$, $\vert u-\frac{p}{q} \vert \ge e^{-\tau(T)}$.
When $(f,g)$ is slow, since $\vert f(x)-u \vert \le b(x) $ and $b$
decreases to $0$, a height control function for $\gamma$ is given by $b^{-1}(e^{-\tau(T)})$. For instance by Roth's Theorem \cite{Ro55}, in case $u$ is an algebraic number, $\tau$ may be $\log \frac{T^3}{C}$, where $C>0$ depends on $u$.  In case $u=\pi$ (by Cijsouw's Theorem \cite{Ci77}), or in case $u=\log w$, for $w$ an algebraic number not $0$ or $1$,  $\tau$ may be $K\log T$ (see \cite[Chapter 3]{Ba75}).
In fact $\tau(T)=K\log T$, and thus $\varphi(T)=b^{-1}(1/T^K)$, is possible for any number which is not a $U$-number of degree $\nu=1$ in Mahler's classification, and almost all numbers being $S$-numbers, almost all numbers are not $U$-numbers (see \cite[Chapter 8]{Ba75}).

\end{enumerate}
\end{remark}

For fixed $T\ge 1, d\in \NN^*$ and  with the notation introduced in Proposition \ref{length},
we define a sequence $(x_n)_{n\in \NN}$ of real numbers by
$$x_0=N(C), \ x_{n+1}=x_n+C'(d,A,B)   \frac{x_n}{\log^C x_n}   T^{-\nu}. $$
Then $[x_n,x_{n+1}]$ is an interval in $[N(C),+\infty[$ of length $$L_n=C'(d,A,B) \frac{x_n}{\log^C x_n}T^{-\nu}$$ such that $\varGamma_{x_n,L_n}(\QQ,T)$
is contained in one algebraic curve of degree $\le d$.
%
%


\begin{remark} For $\gamma$ a slow parametrization of a curve $\varGamma$, with height control function
$\varphi$ and for
$T\ge 1$ fixed, since the sequence $(x_n)_{n\in \NN}$ goes to
$+\infty$, we can  cover the interval $[N(C),\varphi(T)]$, whose image contains the rational points of
$\varGamma$ of height $\le T$,  with a finite number of intervals
$[x_n,x_{n+1}]$. An upper bound on this number provides an upper bound for the number of curves of degree $\le d$ containing $\varGamma_{N(C),+\infty}(\QQ,T)$. The following Lemma
gives such an upper bound.

\end{remark}


\begin{lemm}\label{NumberIntervals} Let
$\gamma$ be a slow parametrization of a plane curve $\varGamma$, with height control function
$\varphi$ and let
$T\ge 1$ and $d\in \NN^*$. We denote by $n(T)$ the least $n\in \NN$ such that $x_n\ge \varphi(T)$.
Then

$$
n(T)\le  \frac{T^{\nu}
\log^{C+1} \varphi(T)}{\log(2) \min(1, C'(d,A,B))}+1,
$$
where $C'(d,A,B)$ and $\nu$ are given by Proposition \ref{length}.
  In particular, we can cover  $[N(C); \varphi(T)]$ with at most
  $$\Big\lfloor \frac{T^\nu
   \log^{C+1} \varphi(T)}{\log(2) \min(1, C'(d,A,B))}\Big\rfloor +1 $$
   intervals $I$
such that $\varGamma_I(\QQ,T)$ is contained in one algebraic curve of $\RR^2$ of degree $\le d$.
\end{lemm}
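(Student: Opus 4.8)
The plan is to observe that the recursion defining $(x_n)$ forces at least geometric growth, with a ratio that can be bounded below uniformly as long as $x_n<\varphi(T)$, and then to convert that lower bound on the multiplicative step into an upper bound on the number of steps needed to reach $\varphi(T)$.

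First I would dispose of the trivial case $N(C)=x_0\ge\varphi(T)$, in which $n(T)=0$ and the claimed inequality holds at once; so assume $x_0<\varphi(T)$, i.e. $n(T)\ge1$. Recall from the proof of Proposition~\ref{BoundAboveGeneral} that one may take $N(C)=e^C$, and (Remark~\ref{constantssmallorbig}) that one may assume $C\ge1$; then every $x_n$ lies in $[e,+\infty[$, so $\log^C x_n\ge1$ (in particular positive, so the recursion never divides by zero), and moreover $\log^C\varphi(T)\ge1$, whence $T^\nu\log^C\varphi(T)\ge1$ since $T\ge1$ and $\nu=4d/(\mu-1)>0$.

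The main step is the inequality $\log^C x_n\le\log^C\varphi(T)$, valid for $0\le n\le n(T)-1$ by minimality of $n(T)$, which turns the additive recursion into a geometric one:
$$
x_{n+1}=x_n\Bigl(1+C'(d,A,B)\frac{T^{-\nu}}{\log^C x_n}\Bigr)\ge(1+\delta)\,x_n,\qquad \delta:=\frac{C'(d,A,B)\,T^{-\nu}}{\log^C\varphi(T)}.
$$
Iterating gives $x_n\ge(1+\delta)^n x_0\ge(1+\delta)^n$ for $0\le n\le n(T)$, and evaluating at $n=n(T)-1$ together with $x_{n(T)-1}<\varphi(T)$ yields $(1+\delta)^{\,n(T)-1}<\varphi(T)$, i.e. $n(T)<1+\log\varphi(T)/\log(1+\delta)$. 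To finish I would bound $\log(1+\delta)$ below using the elementary inequality $\log(1+t)\ge(\log2)\min(1,t)$ for $t\ge0$ (concavity on $[0,1]$, monotonicity for $t\ge1$), together with $T^\nu\log^C\varphi(T)\ge1$; the latter gives $\min(1,\delta)\ge\min(1,C'(d,A,B))/(T^\nu\log^C\varphi(T))$ and hence $\log(1+\delta)\ge(\log2)\min(1,C'(d,A,B))/(T^\nu\log^C\varphi(T))$. Substituting this into the previous display produces exactly the asserted estimate for $n(T)$.

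For the last assertion, the $n(T)$ consecutive intervals $[x_0,x_1],\dots,[x_{n(T)-1},x_{n(T)}]$ cover $[N(C),x_{n(T)}]\supseteq[N(C),\varphi(T)]$; their number is at most $\lfloor T^\nu\log^{C+1}\varphi(T)/(\log(2)\min(1,C'(d,A,B)))\rfloor+1$ since $n(T)$ is an integer bounded by that expression plus $1$; and each $\varGamma_{[x_n,x_{n+1}]}(\QQ,T)$ is contained in a single algebraic curve of degree $\le d$ because $x_{n+1}-x_n$ is precisely the length $L$ furnished by Proposition~\ref{length} with left endpoint $N=x_n\ge N(C)$. The only delicate point — really the only one — is the passage from the additive to the geometric recursion: replacing $\log^C x_n$ by the larger $\log^C\varphi(T)$ is legitimate only on the range $n<n(T)$, and the regimes $\delta\le1$ and $\delta>1$ must be handled uniformly, which is exactly what $\log(1+t)\ge(\log2)\min(1,t)$ achieves. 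The rest is bookkeeping with floors and with the definition of a height control function.
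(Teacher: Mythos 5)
Your proof is correct and follows essentially the same strategy as the paper's: you replace $\log^C x_n$ by $\log^C\varphi(T)$ on the range $n<n(T)$ to turn the additive recursion into geometric growth, then convert the resulting exponential lower bound $x_{n(T)-1}\ge(1+\delta)^{n(T)-1}$ into an upper bound on $n(T)$ via a lower bound on $\log(1+\delta)$. The only (minor) stylistic difference is that the paper handles the regimes $\delta\ge 1$ and $\delta\le 1$ as two separate cases, whereas you fold them into the single inequality $\log(1+t)\ge(\log 2)\min(1,t)$; the substance is the same.
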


\begin{proof} 
 In case $x_0\le x_1\le \cdots \le x_{n-1}\le \varphi(T)$ we
 have
 $$x_n\ge x_0(1+C'(d,A,B)   \frac{T^{-\nu}}{\log^C\varphi(T)}    )^n.$$
  Since by definition $x_{n(T)-1}\le \varphi(T) $, we have
  $$ \varphi(T) \ge x_{n(T)-1}\ge x_0(1+C'(d,A,B)  \frac{T^{-\nu}}{\log^C\varphi(T)})^{n(T)-1}.$$
 In particular
 $$
 n(T)\le \frac{\log \varphi(T)-\log x_0}{
 \log(1+C'(d,A,B)T^{-\nu}/\log^C\varphi(T))}+1.$$
 %
 %
 %
 In case $C'(d,A,B) T^{-\nu}/\log^C\varphi(T)  \ge 1 $, one has
$$n(T)\le  \frac{\log \varphi(T)}{\log 2}+1 \le \frac{T^\nu
\log^{C+1} \varphi(T)}{\log 2 } +1.$$
The last bound in this double inequality is harmless, since we will see in Theorem \ref{fewratpoints} that the term $T^\nu$ is a constant for a good choice of $d$ as a function of
$T$.
In case $C'(d,A,B) T^{-\nu}/\log^C\varphi(T) \le 1$,
 by concavity of the $\log$ function we obtain
\begin{equation*}
n(T)
 \le  \frac{T^\nu \log^{C+1} \varphi(T)}{
 \log(2) C'(d,A,B)}+1.\qedhere
\end{equation*}
\end{proof}


\begin{defn}\label{de: transcendental and Bezout bound} We say that a curve $\varGamma\subset \RR^2$ is {\bf transcendental} if it contains no infinite semialgebraic set.
A {\bf B\'ezout bound for $\varGamma$} is any quantity ${\cal B}(x,d)$ that dominates
the number of points of $\varGamma_{[a,x]}\cap P^{-1}(0)$ as
$P$ ranges over all nonzero polynomials in $\RR[X,Y]$ of degree at most $d\in\NN^*$.
\end{defn}

\begin{notation} For $x\in\RR$, put $\log_+ x=\max(1,\log x)$.
\end{notation}

\begin{thm}\label{fewratpoints} Let $\gamma $ be a slow parametrization of a transcendental
plane curve $\varGamma$, with height control function
$\varphi$, and let $T\ge 1$. Then there exists a constant $\alpha=\alpha(A,B,C)$
such that
$$\# \varGamma_{N(C),+\infty}(\QQ,T)\le
\alpha
  \log_+^{2(B+C)}( T)
\log^{C+1}(\varphi(T)){\cal B}(\varphi(T),\log_+ T),$$
where $N(C)$ is given by Proposition \ref{BoundAboveGeneral}, and can for instance be $e^C$.

In particular $\varGamma$ has finite order as soon as $\varphi$ is polynomially bounded and
${\cal B}(x,d) \le Q(\log x, d)$, for some polynomial $Q$, or as soon as $\varphi(T)$ is bounded by a power of $\log(T)$ and ${\cal B}(x,d) \le Q(x, d)$, for some polynomial $Q$.
\end{thm}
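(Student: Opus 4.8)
The plan is to feed the interval covering of Lemma~\ref{NumberIntervals} into the B\'ezout bound ${\cal B}$, after choosing the degree $d$ of the auxiliary algebraic curves as the right function of $T$.

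Fix $T\ge1$ and set $d=d(T)=\lceil\log_+ T\rceil\in\NN^*$; this is why the second argument of ${\cal B}$ in the statement is $\log_+ T$. With $\mu=\frac{(d+2)(d+1)}{2}$ one has $\mu-1=\frac{d(d+3)}{2}$, so $\nu=\frac{4d}{\mu-1}=\frac{8}{d+3}$, and since $d\ge\log_+ T\ge\log T$ this forces $\nu\log T\le 8$; hence the quantity $T^\nu$ occurring in Lemma~\ref{NumberIntervals} is bounded by the absolute constant $e^8$. This is the step that carries the argument: it is exactly the fast decay of the derivatives imposed by a slow parametrization that lets $d$ grow only logarithmically in $T$ while $T^\nu$ stays absolutely bounded, so that the number of covering intervals comes out a power of $\log T$ rather than a power $T^\varepsilon$ (this is where slow parametrizations improve on Yomdin--Gromov / Pila--Wilkie bounds, as in the remarks above). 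The only place that needs care is tracking how the constants $\mu$, $C'(d,A,B)$, $T^\nu$ depend on $d$ of size $\log_+ T$, so as to extract the factor $\log_+^{2(B+C)}(T)$ with a coefficient $\alpha$ depending only on $A,B,C$ and not on $\varphi$ or ${\cal B}$.

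Now the count. By the defining property of the height control function $\varphi$, every point of $\varGamma_{N(C),+\infty}(\QQ,T)$ is $\gamma(x)$ for some $x\in[N(C),\varphi(T)]$ (if $\varphi(T)<N(C)$ this set is empty and there is nothing to prove). By Proposition~\ref{length} and Lemma~\ref{NumberIntervals}, the interval $[N(C),\varphi(T)]$ is covered by $n(T)$ subintervals $I_1,\dots,I_{n(T)}$ with
$$
n(T)\le\frac{T^\nu\log^{C+1}\varphi(T)}{\log(2)\,\min(1,C'(d,A,B))}+1,
$$
and for each $j$ the set $\varGamma_{I_j}(\QQ,T)$ lies on the zero set of a nonzero $P_j\in\RR[X,Y]$ of degree at most $d$. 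Since $\varGamma_{I_j}(\QQ,T)\subseteq\varGamma_{[a,\varphi(T)]}\cap P_j^{-1}(0)$ (for the last interval one uses that a rational point of height $\le T$ has parameter $\le\varphi(T)$), the definition of a B\'ezout bound gives $\#\varGamma_{I_j}(\QQ,T)\le{\cal B}(\varphi(T),d)={\cal B}(\varphi(T),\log_+ T)$, so that, summing (an overcount is harmless),
$$
\#\varGamma_{N(C),+\infty}(\QQ,T)\le n(T)\,{\cal B}(\varphi(T),\log_+ T).
$$
It remains to bound $n(T)$: we have $T^\nu\le e^8$, and since $\mu=\mu(d)\le(d+2)^2\le 16\log_+^2(T)$, the explicit value $C'(d,A,B)=A^{-1}\mu^{-B}\mu^{-2d/\mu}(\mu!)^{-2/(\mu(\mu-1))}$ from Proposition~\ref{length} satisfies $1/C'(d,A,B)\le c\,\log_+^{2B}(T)$ for a constant $c=c(A,B)$, the factors $\mu^{-2d/\mu}$ and $(\mu!)^{-2/(\mu(\mu-1))}$ lying between two positive absolute constants (both tend to $1$ as $d\to\infty$). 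Hence $n(T)$ is at most a constant depending only on $A,B$ times $\log_+^{2B}(T)\log^{C+1}\varphi(T)$, plus $1$; inflating the exponent $2B$ to $2(B+C)$ (legitimate since $\log_+ T\ge1$) and absorbing the additive $1$ into the constant (using ${\cal B}\ge1$) yields the asserted inequality with a suitable $\alpha=\alpha(A,B,C)$.

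Finally, the ``in particular'' clauses. Under the first set of hypotheses $\log\varphi(T)=O(\log T)$ and ${\cal B}(\varphi(T),\log_+ T)\le Q(O(\log T),\log_+ T)$ is a polynomial in $\log T$; under the second, $\log\varphi(T)=O(\log\log T)$ and ${\cal B}(\varphi(T),\log_+ T)\le Q((\log T)^{O(1)},\log_+ T)$ is again a polynomial in $\log T$. In either case the right-hand side of the displayed bound is polynomial in $\log T$, so $\rho(\varGamma_{N(C),+\infty})<\infty$. The complementary compact arc $\varGamma_{[a,N(C)]}$ is handled the same way: on the compact interval $[a,N(C)]$ the function $\log^C x/x$ is bounded, so $\gamma|_{[a,N(C)]}$ satisfies condition $(2)$ of Definition~\ref{slowp} with the exponent $C$ replaced by $0$, the constant function $N(C)$ is a height control function for it, and the same covering estimate (now with $C=0$) gives $\#\varGamma_{[a,N(C)]}(\QQ,T)$ bounded by a constant depending only on $A,B$ times $\log_+^{2B}(T)\,{\cal B}(N(C),\log_+ T)$, again polynomial in $\log T$ under either hypothesis (alternatively, when $\gamma$ is real-analytic one may invoke Pila's mild parametrizations on this compact arc). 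Since $\#\varGamma(\QQ,T)\le\#\varGamma_{[a,N(C)]}(\QQ,T)+\#\varGamma_{N(C),+\infty}(\QQ,T)$, it follows that $\varGamma$ has finite order.
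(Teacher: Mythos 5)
Your proof is correct and follows essentially the same route as the paper: pick $d\approx\log_+ T$ so that $T^\nu$ is bounded by an absolute constant, cover $[N(C),\varphi(T)]$ by the $n(T)$ intervals of Lemma~\ref{NumberIntervals}, multiply by ${\cal B}(\varphi(T),\log_+T)$, and track the $\log_+^{2B}T$ dependence of $1/C'(d,A,B)$ before inflating the exponent. The explicit treatment of the compact arc $[a,N(C)]$ (taking $C=0$ on a bounded interval) is a reasonable unpacking of what the paper dispatches by reference to the mild-parametrization computation of \cite{Pi2006}.
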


\begin{proof}
By
Lemma \ref{NumberIntervals}, the numbers of intervals $I$ we need to cover $[N(C), \varphi(T)]$, in such a way that
$\varGamma_I(\QQ,T)$ is contained in one algebraic curve of degree at most $d$, is less than
$$ \frac{T^\nu \log^{C+1} \varphi(T)}{\log(2)\min(1,C'(d,A,B))} +1,
 $$
with
$$\frac{1}{C'(d,A,B)}=
A\mu^B \mu^{2d/\mu}(\mu!)^{2/(\mu(\mu-1)}.$$
Thus, $\#\varGamma_{N(C,E),+\infty}(\QQ,T)$ is bounded by
$$\left( \frac{T^\nu  \log^{C+1} \varphi(T)}{\log(2)\min(1,C'(d,A,B))}+1 \right){\cal B}(\varphi(T),d).$$
By Proposition \ref{length}, $\nu=\displaystyle\frac{4d}{\mu-1}\le
\frac{8d}{(d+1)(d+2)-2}\le \frac{8}{d}$.
Taking
$$d=  \lfloor\log_+ T\rfloor,$$ we find for instance $T^{\nu}\le e^{16}$.
Finally, let us bound each factor of $1/C'(d,A,B)$.
We have, since
$\frac{d^2}{2}\le \mu\le 2d^2$,
$$ \mu^B\le 2^B(1+ \log_+ T)^{2B}\le 2^B (1+1/\log 2)^{2B} \log^{2B}_+ T.$$
$$ \mu^{\frac{2d}{\mu}}=e^{\frac{4}{d}\log(2d^2)}\le e^{12}.$$
$$ (\mu!)^{\frac{2}{\mu(\mu-1)}}
\le \mu^{\frac{2}{(\mu-1)}}
\le e^{\frac{4 \log \mu}{\mu}}
\le e^{\frac{4}{e}}.$$
Now in case $\min(1,C'(d,A,B))=1$, we have that the number of intervals $I$ we need to cover $[N(C), \varphi(T)]$ is bounded by
$ \alpha' \log^{C+1}\varphi(T), $
for some real number $\alpha'$.

 On the other hand, if $\min(1,C'(d,A,B))\neq 1 $, we have obtained  as a bound $ \alpha'' \log^{2(B+C)}_+ T
\log^{C+1}\varphi (T)$, for some real number $\alpha''$ depending on our data $A,B,C$. In any case, one can take for a bound
$\alpha \log_+^{2(B+C)}T
\log^{C+1} \varphi(T)$, with $\alpha=\max(\alpha', \alpha'')$.

Assuming that  $\varphi$ is polynomially bounded and
${\cal B}(x,d) \le P(\log x, d)$, for some polynomial $P\in \RR[X,Y]$, one directly obtains that
$\varGamma_{N(C),+\infty}$ has finite order. But since ${\cal B}(x,d) \le P(\log x, d)$, one  also
deduces that $\varGamma_{[a, N(C)]}$ has finite order, by the same computation on $[a,N(C)]$,
that reduces in the compact situation to the computation of \cite{Pi2006} for mild parametrization.

\end{proof}

\begin{remark}
The bound given for $\#\varGamma_{N(C),+\infty}(\QQ,T)$ in Theorem \ref{fewratpoints} is a product of two factors, one coming from the fast decay of the derivatives of the slow parametrization, the other one,
${\cal B}(\varphi(T), \log_+ T)$, depending on how much the curve intersects algebraic curves
of degree $\lfloor \log_+ T \rfloor $ on the parameter interval $[N(C),\varphi(T)]$.
Those two factors may behave independently.
We know, by  \cite{Pi06counterexample}, \cite{Su02} or \cite{Su06}, of functions analytic on a neighbourhood of a compact interval having asymptotically as many as possible rational
points of height less than $T$  in their graph (for instance, for $\epsilon \in ]0,1[$, more than $\frac{1}{2} e^{2\log^{1-\epsilon} T}$ points, for an infinite sequence of values of $T$, whereas it has to be less than $C_\epsilon e^{\epsilon\log T}$ by \cite{BoPi1989} or \cite{PiWi2006}). Since the graph of a function analytic on a neighbourhood of a compact interval automatically inherits a mild
parametrization, that is the compact version of our slow parametrization, it means that for those  functions the second factor in our bound is as big as possible, while the first one stays bounded by a power of $\log T$. On the other hand for some families of curves $\varGamma$, one knows that $\varGamma$
cuts algebraic curves of given degree in few points with respect to this degree, and in this case, the split form of the bound of $\#\varGamma_{N(C),+\infty}(\QQ,T)$ in Theorem \ref{fewratpoints} provides a small value.
\end{remark}
\begin{remark}\label{rk: slow_+ parametrizations}
Parametrizations $\gamma$ with stronger decay than the decay $\varphi_p$ of smooth pa\-ra\-me\-tri\-za\-tions of Definition \ref{slowp} provide better bounds for $\#\varGamma(\QQ,T)$. For instance, for some positive real number $E$ and with the notation of Definition \ref{slowp}, one can consider
parametrizations $\gamma=(f,g)$ such that $g$ is slow and $f$ satisfies for
all $p\ge 0$
\begin{equation}\label{eq: slow_+ parametrization}
\vert \frac{(f-u)^{(p)}}{p!}(x) \vert \le \frac{1}{x^E}\varphi_p(x).
\end{equation}
In this case
\begin{itemize}
\item  by Remark \ref{hcf}$(3)$, when $u\in \QQ$ one can take $T^{\frac{K}{E}}$ as a height control function  for $\gamma$ if $f-u$ has no zeros; by Remark
\ref{hcf}$(4)$, in case $u\not\in \QQ$ has
an irrationality measure function $\tau(T)$ of the form $K\log T$, $K>0$ (in particular when $u$ is not a $U$-number of degree $1$),
then
one can take $T^{\frac{K}{E}}$ as height control function for $\gamma$.
\item
Since the decay of the derivatives of
$f$ is improved by condition (\ref{eq: slow_+ parametrization}), comparing to the case where $f$ is simply slow, assuming condition (\ref{eq: slow_+ parametrization}), the same computations lead
 in Lemma \ref{BoundAboveDer} to a denominator $x^{p+E\alpha_1}$ instead of $x^p$, in Proposition \ref{BoundAboveGeneral} to a denominator
$\displaystyle N^{\rho+E\frac{d^2}{2}}$ instead of $N^\rho$, and
$N(C,E)=e^{\frac{C}{E}}$ instead of $N(C)=e^C$. As
a consequence, in Theorem \ref{fewratpoints} we obtain a smaller factor
$\log\varphi(T)$ instead of $\log^{C+1}\varphi(T)$.
\end{itemize}
\end{remark}

\section{ Some Examples. }

We apply in this section Theorem \ref{fewratpoints}, the main statement of Section \ref{Scounting},
  which provides a bound for the number of rational points of prescribed height in a curve
with slow parametrization $\gamma$ and convenient height control function $\varphi$ as soon as we know a convenient B\'ezout bound ${\cal B}(x,d)$ for such a curve.
In particular, we indicate how to obtain Proposition \ref{introcurve}
by proving in detail a special case of Proposition \ref{introspiral}, and similarly for Proposition \ref{introeuler}.

\subsection{Spirals.}

One typical family of oscillatory  curves that Theorem \ref{fewratpoints} allows us to treat is  the following family of ``fast'' logarithmic spirals.
Let $\ell,q\in \NN^*$, $F, G >0$
and let

\begin{equation}\label{eq:ParametrizationSpiral}
\phi_\ell:=\frac{1}{x^F}\sin \circ \log^\ell(x),\ \
\psi_q:=\frac{1}{x^G}\cos \circ \log^q(x),
\end{equation}
defined  on some unbounded interval $[a,+\infty[$ in $\RR_+^*$, with $a>1$.
Put $\gamma_{\ell,q}:=(\phi_\ell, \psi_q)$.
%
Observe that the image $\varGamma$ of $\gamma_{\ell,q}$ is the same as that of
$$
t\mapsto (e^{-Ft}\sin^d t, e^{-Gt}\cos ^d t),\quad t\geq \log a.
$$
The following two Lemmas \ref{majdkLogl} and \ref{majdkSinLogl} will be used to prove in Lemma \ref{slowspiral} that $\gamma_{\ell,q}$ is a slow parametrization of the curve $\varGamma$.

\begin{lemm}\label{majdkLogl}
For $\ell\ge 1$,
$p\ge 1$ and $x\ge e$,
$$   \frac{(\log^\ell)^{(p)} x}{p!}  \le  2^\ell  p^\ell \frac{\log^{\ell-1} x }{x^p} .$$
\end{lemm}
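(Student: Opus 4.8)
The plan is to get a clean closed form for the $p$-th derivative of $\log^\ell$ by writing $\log^\ell x = (\log x)^\ell$ and applying the chain/product rule, then to estimate the resulting sum crudely. First I would recall that, by Faà di Bruno or by iterating the product rule, $\frac{d^p}{dx^p}(\log x)^\ell$ is a finite sum of terms of the form $c\cdot (\log x)^{\ell-j} x^{-p}$ where $j$ ranges over $1,\dots,\min(\ell,p)$ and the coefficients $c$ are integers built from falling factorials in $\ell$ and Stirling-type numbers coming from the derivatives of $\log$. Since $x\ge e$ we have $0\le \log^{\ell-j}x \le \log^{\ell-1}x$ for every $j\ge 1$, so every term is bounded by $|c|\,\frac{\log^{\ell-1}x}{x^p}$, and it remains to show that the total of all the $|c|/p!$ is at most $2^\ell p^\ell$.

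For the coefficient bound I would argue combinatorially rather than track the exact numbers. Each application of a derivative either lowers an existing exponent of $\log x$ (picking up a factor of the current exponent, which is at most $\ell$) and multiplies in a factor of $x^{-1}$, or differentiates one of the accumulated powers of $x^{-1}$ (picking up a factor bounded in absolute value by $p$, since after $k$ steps the exponent on $x$ has size at most $k\le p-1$). Thus after $p$ steps the absolute value of any monomial coefficient is at most $\ell^{\,(\text{number of }\log\text{-lowering steps})}\cdot p^{\,(\text{number of }x\text{-differentiating steps})}$; since there are at most $\ell$ log-lowering steps (we cannot lower the exponent below $0$) and the two kinds of steps total $p$, each coefficient is at most $\ell^\ell p^{p}$ is far too lossy — instead one should count: the number of ways to carry out the $p$ steps is at most... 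Here the sharper route is simply to use the standard identity
$$
(\log x)^{(\ell),\,[p]} \;=\; (\log x)^\ell\Big/ x^p \cdot \sum_{k} S(\ell,k,p)(\log x)^{-k}
$$
and bound the number of surjection-type terms by $p!\,\binom{p}{\le \ell}$ summed against the $\ell$-dependence; concretely, differentiating $(\log x)^\ell$ exactly $p$ times and then extracting $1/p!$ leaves a sum of at most $\ell$ terms each of the shape $\frac{1}{p!}\cdot(\text{product of }\le p\text{ integers each }\le\max(\ell,p))\cdot\frac{\log^{\ell-1}x}{x^p}$, and I would bound $\frac{1}{p!}$ times that product by $p^\ell$ using $\prod_{j=1}^{p}(\text{factor}_j)\le p!\,p^\ell$ when at most $\ell$ of the factors carry the "extra" growth beyond the $j$-th falling-factorial factor from the $x^{-1}$ chain. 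Multiplying the $\le 2^\ell$ (a generous count of terms, or absorb the constant here) gives the stated $2^\ell p^\ell$.

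The main obstacle, and the only place real care is needed, is exactly this bookkeeping of the integer coefficients: one must be honest that after $p$ differentiations the powers of $x^{-1}$ that get re-differentiated contribute falling factorials whose product is comparable to $p!$ (absorbing the $1/p!$), while the $\ell$-fold structure of $(\log x)^\ell$ can contribute at most $\ell$ extra factors bounded by $\ell\le p$ — wait, $\ell$ need not be $\le p$, so those factors are bounded by $2\ell$ or folded into the $2^\ell$ — and that the number of distinct monomials is $O(\ell)$, not $O(p)$. I expect the cleanest writeup to proceed by induction on $p$ for fixed $\ell$: assuming $\frac{(\log^\ell)^{(p)}x}{p!}\le 2^\ell p^\ell \frac{\log^{\ell-1}x}{x^p}$, differentiate once more, use $\big((\log^\ell)^{(p)}\big)' = (\log^\ell)^{(p+1)}$, handle the two resulting pieces (one from differentiating $\log^{\ell-1}x$, one from differentiating $x^{-p}$) against the target bound with $p+1$, and check $2^\ell p^\ell\cdot\frac{p+\ell}{p+1}\le 2^\ell(p+1)^\ell$, which holds since $\frac{p+\ell}{p+1}\le\frac{(p+1)^\ell}{p^\ell}$ for $\ell\ge 1$. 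The base case $p=1$ is immediate from $(\log^\ell)'x=\ell\log^{\ell-1}x\cdot x^{-1}\le 2^\ell\log^{\ell-1}x\cdot x^{-1}$.
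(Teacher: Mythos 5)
Your final inductive argument, the only part that reads as a complete proof, is invalid as written: from the hypothesis $\frac{(\log^\ell)^{(p)}x}{p!}\le 2^\ell p^\ell\frac{\log^{\ell-1}x}{x^p}$ you ``differentiate once more'' and control $(\log^\ell)^{(p+1)}$ by the two pieces coming from differentiating $\log^{\ell-1}x$ and $x^{-p}$. But a pointwise bound on a function says nothing about its derivative; one cannot differentiate an inequality. The earlier paragraphs (Fa\`a di Bruno, the undefined ``Stirling-type'' coefficients $S(\ell,k,p)$, a step-counting heuristic that you yourself flag as ``far too lossy'') never close to a rigorous estimate either.

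The induction can be repaired by strengthening the hypothesis to a structural identity rather than a pointwise bound: for $p\ge 1$ write $\frac{(\log^\ell)^{(p)}(x)}{p!}=x^{-p}\sum_{k=1}^{\min(\ell,p)}c_{p,k}\log^{\ell-k}x$ and prove $\sum_k\lvert c_{p,k}\rvert\le 2^\ell p^\ell$. Differentiating this identity and dividing by $p+1$ gives $c_{p+1,k}=\frac{1}{p+1}\bigl((\ell-k+1)c_{p,k-1}-p\,c_{p,k}\bigr)$, hence $\sum_k\lvert c_{p+1,k}\rvert\le\frac{\ell+p}{p+1}\sum_k\lvert c_{p,k}\rvert$, and your (correct) verification that $\frac{\ell+p}{p+1}\le\bigl(1+\frac1p\bigr)^\ell$ closes the step; since $x\ge e$ makes $\log^{\ell-k}x\le\log^{\ell-1}x$ for $k\ge1$, the pointwise bound follows. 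The paper avoids induction entirely: it plugs $f=1$, $g=\log$, $\beta=\ell$ into the multinomial Leibniz formula (\ref{derprod}), so that $\frac{(\log^\ell)^{(p)}x}{p!}=\sum_{j_1+\cdots+j_\ell=p}\prod_r\frac{g^{(j_r)}(x)}{j_r!}$; each summand has absolute value at most $\frac{\log^{\ell-1}x}{x^p}$ (using $\frac{g^{(j)}}{j!}=\pm\frac{1}{jx^j}$ for $j\ge1$, $g^{(0)}=\log$, and $\log x\ge1$), and the number of summands is at most $(p+1)^\ell\le 2^\ell p^\ell$. That one-pass count is shorter and matches the style of the neighboring lemmas, which work directly from (\ref{derprod}) and (\ref{FaaDiBruno}).
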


\begin{proof} We use formula (\ref{derprod}) with $f=1$, $g=\log$
and $\beta=\ell$. We have
$$  \frac{(\log^\ell)^{(p)} x}{p!}
=\sum_{j_1+\cdots + j_\ell =p}
\frac{g^{(j_1)}(x)\cdots g^{(j_\ell)}(x)}{  j_1 ! \cdots j_\ell !}.$$
Let $k$ be the number of nonzero indices $j_r$ in the
term  $\frac{g^{(j_1)}(x)\cdots g^{(j_\ell)}(x)}{  j_1 ! \cdots j_\ell !}$; then this term is equal to
$$
\frac{\log^{\ell-k} x}{ x^p j_1 \cdots j_\ell },$$
where only the non zero $j_r$'s appear in the denominator.
We then have, since $\log x\ge 1$,
$$
\frac{(\log^\ell)^{(p)} x}{p!}
\le (p+1)^\ell \frac{\log^{\ell-1} x }{x^p}
\le  2^\ell p^\ell \frac{\log^{\ell-1} x }{x^p}.
$$

\end{proof}
In the next Lemma, we use the following classical formula

\begin{equation}\label{FaaDiBruno}
\frac{(f\circ g)^{(p)}(x)}{p!}
= \sum_{1m_1+2m_2+\cdots+pm_p=p}
\frac{f^{(m_1+\cdots+m_p)}(g(x)) \prod_{j=1}^p
(\frac{g^{(j)}(x)}{j!})^{m_j}}{m_1!\cdots m_p!}
\end{equation}

\begin{lemm}\label{majdkSinLogl}
Let $d\in \RR$ and $f$ be a smooth function defined on $[d,+\infty[$.
Let
$\alpha \ge 1$ be such that $\vert f^{(p)}(x)\vert \le \alpha^p$
for all $x\ge d$ and $p\ge 0$.
\begin{enumerate}
\item[(1)]
If $s\in S([e,+\infty[)$ with range in
$[d,+\infty[$, then $f\circ s \in S([e,+\infty[)$.
\item[(2)]
If $\ell\ge 1$, $p\ge 0$ and $x\ge e$, then
$$
\frac{\vert  (f\circ \log^\ell)^{(p)}( x)\vert}{p!}
\le (\alpha 2^\ell)^p
  p^{(\ell+1) p} \frac{\log^{p(\ell-1)} x}{x^p}$$
(and so  $f\circ \log^\ell \in S([e,+\infty[)$).
\end{enumerate}
\end{lemm}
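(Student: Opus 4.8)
The plan is to prove both parts by direct estimation, using the Fa\`a di Bruno formula (\ref{FaaDiBruno}) together with the bound on $(\log^\ell)^{(j)}$ from Lemma \ref{majdkLogl}. Part (1) will essentially follow from part (2) in spirit, but it is cleaner to prove it directly: if $s$ is slow with data $A,B,C,D$ and $f$ has $\vert f^{(m)}\vert\le\alpha^m$, then writing $(f\circ s)^{(p)}/p!$ via (\ref{FaaDiBruno}), each summand over $1m_1+\cdots+pm_p=p$ is bounded by $\alpha^{m_1+\cdots+m_p}\prod_j\varphi_{j}(x)^{m_j}$ up to the slow-data constants, where $\varphi_j(x)=D(Aj^B\log^Cx/x)^j$. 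Since $m_1+\cdots+m_p\le p$ and $\sum_j jm_j=p$, the powers of $\log^Cx/x$ collect to exactly the $p$-th power, and the $j^{Bj}$ factors are absorbed into a bound of the form $(A'p^{B'}\log^{C'}x/x)^p$ after using $j\le p$ and counting that the number of partitions of $p$ is at most, say, $2^p$ (or $p^p$, which is still harmless since it contributes only to the polynomial-in-$p$ base). This shows $f\circ s\in S([e,+\infty[)$.

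For part (2), I would specialize $s=\log^\ell$ and track the constants more carefully to get the explicit bound $(\alpha 2^\ell)^p p^{(\ell+1)p}\log^{p(\ell-1)}x/x^p$. Starting from (\ref{FaaDiBruno}) with $g=\log^\ell$, each summand is
$$
\frac{\vert f^{(m_1+\cdots+m_p)}(\log^\ell x)\vert}{m_1!\cdots m_p!}\prod_{j=1}^p\Bigl(\frac{(\log^\ell)^{(j)}(x)}{j!}\Bigr)^{m_j}
\le \alpha^{m}\prod_{j=1}^p\Bigl(2^\ell j^\ell\frac{\log^{\ell-1}x}{x^j}\Bigr)^{m_j},
$$
where $m=m_1+\cdots+m_p\le p$ (one uses here that $(\log^\ell)^{(j)}/j!\le 2^\ell j^\ell\log^{\ell-1}x/x^j$ holds for $j\ge 1$ by Lemma \ref{majdkLogl}, and trivially the $j=0$ case does not occur in the product since the index runs from $1$). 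Now $\prod_j (x^{-j})^{m_j}=x^{-p}$ and $\prod_j(\log^{\ell-1}x)^{m_j}=(\log^{\ell-1}x)^{m}\le\log^{p(\ell-1)}x$ since $\log x\ge 1$ and $m\le p$; also $\prod_j(2^\ell)^{m_j}=(2^\ell)^m\le(2^\ell)^p$, and $\alpha^m\le\alpha^p$ as $\alpha\ge 1$. The term $\prod_j(j^\ell)^{m_j}=\bigl(\prod_j j^{m_j}\bigr)^\ell\le p^{\ell m}\le p^{\ell p}$. So each summand is at most $(\alpha 2^\ell)^p p^{\ell p}\log^{p(\ell-1)}x/x^p$, and since the number of summands (partitions of $p$) is at most $p^p$ — or more sharply, the number of tuples with $\sum jm_j=p$ — multiplying by $p^p$ gives the exponent $p^{(\ell+1)p}$. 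The final membership $f\circ\log^\ell\in S([e,+\infty[)$ is then immediate from the shape of this bound, reading off $A=\alpha 2^\ell$, $B=\ell+1$, $C=\ell-1$.

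The main obstacle is purely bookkeeping: making sure the combinatorial factor from the number of terms in Fa\`a di Bruno's formula is absorbed into the $p^{(\ell+1)p}$ factor rather than producing an uncontrolled constant, and being careful that $m=m_1+\cdots+m_p$ satisfies $m\le p$ (which holds since each $m_j\ge 0$ and $\sum_j jm_j=p$ forces $m\le\sum_j jm_j=p$), so that the surplus powers of $\log x$ are bounded using $\log x\ge 1$ rather than going the wrong way. One should also double-check the degenerate cases $p=0$ (where the claimed bound reads $1\le 1$) and $\alpha=1$, but these are immediate. No delicate analysis is needed beyond this; everything reduces to the two formulas (\ref{derprod})–(\ref{FaaDiBruno}) and Lemma \ref{majdkLogl}.
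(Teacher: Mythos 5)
Your proof is correct and uses essentially the same approach as the paper's: Fa\`a di Bruno's formula together with Lemma \ref{majdkLogl}, with the count of partitions of $p$ absorbed into a $p^p$ factor. The only minor difference is that the paper derives part (2) by plugging the constants $A=2^\ell$, $B=\ell$, $C=\ell-1$ (obtained by loosening Lemma \ref{majdkLogl} into the slow-function form) into the bound $\alpha^p A^p p^{(B+1)p}\log^{Cp}x/x^p$ established in part (1), whereas you redo the Fa\`a di Bruno computation directly from the un-powered form of Lemma \ref{majdkLogl}; this is a cosmetic variation yielding the same estimate.
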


\begin{proof}
 (1)~Let us denote by $A$, $B$ and $ C$ the constants attached to the slow function $s$.
 Note that $f\circ s$ is bounded (by $1$), hence so is $f$. By formula (\ref{FaaDiBruno}), one has for $p\ge 1$
$$ \vert \frac{(f\circ s)^{(p)}(x)}{p!} \vert
\leq \sum_{1m_1+2m_2+\cdots+pm_p=p} \alpha^p
 \prod_{j=1}^p  A^{jm_j}j^{B jm_j} \frac{\log^{Cjm_j} x}{x^{jm_j}}$$
 $$
  \le \alpha^p A^p p^{(B+1) p}\frac{\log^{Cp} x}{x^p}    . $$

 (2)~One cannot use directly  statement $(1)$, since strictly speaking, $\log^\ell$ is not slow on $[e,+\infty[$, as it is not bounded, but nevertheless the  computation made to prove~$(1)$ does not use that $s$ is bounded and shows that, for any $p\ge 1$, one has the bound announced in $(2)$. Indeed by Lemma \ref{majdkLogl},  we can take the constants $A=2^\ell, B=\ell, C=\ell-1$ for $\displaystyle   (\log^\ell)^{(p)}/p!  $  to satisfy the bound of Definition \ref{slowp},  for any $p\ge 1$.
\end{proof}

\begin{lemm}\label{slowspiral}
 Let  $d\in \RR$, $f$ and $g$ be two smooth functions defined on $[d,+\infty[$, and $F,G>0$.
Assume that $f$ and $g$ are in the set of functions $h$ satisfying
$$h=\operatorname{Id} \hbox{ or } \exists \alpha (\ge 1), \
\forall x\ge d, \ \forall p\ge 0,\ \vert h^{(p)}(x)\vert \le \alpha^p.$$
\noindent
Let $s$ and $\sigma$ be two slow functions on $[e,+\infty[$
with range in $[d,+\infty[$.
Then, for $x\ge e$, the parametrization
$$ x\mapsto (\frac{1}{x^F}f\circ s(x),
 \frac{1}{x^G}g\circ \sigma(x))$$
is a slow parametrization, satisfying condition
(\ref{eq: slow_+ parametrization}) of Remark \ref{rk: slow_+ parametrizations}.
In particular the parametrization $\gamma_{\ell,q}=(\phi_\ell, \psi_q)$ of (\ref{eq:ParametrizationSpiral}) is a slow parametrization
of the spiral $\gamma_{\ell,q}([e,+\infty[)$,
satisfying condition (\ref{eq: slow_+ parametrization}), with height control function
$\varphi(T)=T^{\frac{1}{\min(F,G)}} $.

\end{lemm}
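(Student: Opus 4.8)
The plan is to verify that each coordinate of the proposed map is a slow function, in fact one satisfying the sharper decay (\ref{eq: slow_+ parametrization}) of Remark \ref{rk: slow_+ parametrizations}, and then to read off the height control function for $\gamma_{\ell,q}$ from Remark \ref{hcf}. Both coordinates tend to $0$ as $x\to+\infty$, so $\varGamma$ is bounded and condition $(1)$ of Definition \ref{slowp} holds with $u=v=0$ and $b(x)=Mx^{-F}$, $c(x)=Mx^{-G}$ for a suitable constant $M$; the real content is thus condition $(2)$. First I would reduce the inner composition: if $f=\operatorname{Id}$ then $f\circ s=s$ is slow by hypothesis, and otherwise $|f^{(p)}|\le\alpha^{p}$ forces $|f|\le\alpha^{0}=1$, whence $f\circ s\in S([e,+\infty[)$ (and is bounded by $1$) by Lemma \ref{majdkSinLogl}$(1)$; the same applies to $g\circ\sigma$. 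Write $A',B',C',D'$ for the data attached to $f\circ s$.

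The heart of the proof is the behaviour of $1/x^{F}$ under differentiation. One computes
$$
\frac{(x^{-F})^{(p)}}{p!}=(-1)^{p}\,\frac{F(F+1)\cdots(F+p-1)}{p!}\,x^{-F-p},
$$
and since $F(F+1)\cdots(F+p-1)/p!=\Gamma(F+p)/(\Gamma(F)\,p!)$ is bounded by a polynomial in $p$ depending only on $F$, there are constants $A_{F},B_{F},D_{F}$ with
$$
\Bigl|\frac{(x^{-F})^{(p)}}{p!}\Bigr|\le\frac{1}{x^{F}}\,D_{F}\Bigl(A_{F}p^{B_{F}}\frac{1}{x}\Bigr)^{p}\qquad(x\ge e,\ p\ge 0).
$$
Then I would apply the Leibniz rule to $h_{1}:=(1/x^{F})\cdot(f\circ s)$: inserting this estimate together with the slow bound for $f\circ s$, one factors out $1/x^{F}$, replaces $i^{B_{F}}$ and $j^{B'}$ by $p^{\max(B_{F},B')}$, uses $\log^{C'}x\ge 1$, and collapses the $(p+1)$-term sum (each summand being at most $(A_{2}p^{B_{2}}\log^{C_{2}}x/x)^{p}$, and $p+1\le 2^{p}$). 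This yields $|h_{1}^{(p)}(x)/p!|\le x^{-F}\varphi_{p}(x)$ for $x\ge e$, which is precisely (\ref{eq: slow_+ parametrization}) with $E=F$ (after renormalizing the constant as in Remark \ref{re: f and g bounded by 1}); in particular $h_{1}$ is slow. The second coordinate $h_{2}:=(1/x^{G})\cdot(g\circ\sigma)$ is handled identically with $E=G$. This is essentially the computation behind ``$S([a,+\infty[)$ is an algebra'' (Remark \ref{re:AlgeraSlow}), now carrying the prefactor $1/x^{F}$ along.

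For the spiral $\gamma_{\ell,q}=(\phi_{\ell},\psi_{q})$ of (\ref{eq:ParametrizationSpiral}): here $\sin\circ\log^{\ell}$ and $\cos\circ\log^{q}$ are not literally of the form $f\circ s$ with $s$ slow, since $\log^{\ell}$ is unbounded, but by Lemma \ref{majdkSinLogl}$(2)$ they lie in $S([e,+\infty[)$ and are bounded by $1$, so the Leibniz step above applies verbatim and shows $\phi_{\ell},\psi_{q}$ satisfy (\ref{eq: slow_+ parametrization}) with $E=F$ and $E=G$. For the height control function I would use Remark \ref{hcf}$(2)$ with $u=v=0$, $b(x)=x^{-F}$, $c(x)=x^{-G}$; this needs only that $\phi_{\ell}$ and $\psi_{q}$ have no common zero on $[e,+\infty[$. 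At a putative common zero, setting $t=\log x\ge 1$, we would have $t^{\ell}=k\pi$ and $t^{q}=(2m+1)\pi/2$ with $k\in\NN^{*}$, $m\in\NN$; raising these to the powers $q$ and $\ell$ gives $2^{\ell}k^{q}\pi^{\,q-\ell}=(2m+1)^{\ell}$ (or the reciprocal relation when $\ell>q$), which forces $\pi^{\,|q-\ell|}\in\QQ$ unless $\ell=q$, in which case $(2k)^{\ell}=(2m+1)^{\ell}$ gives the impossible equality $2k=2m+1$. Hence no common zero; and since a nonzero rational of height $\le T$ has absolute value $\ge 1/T$ while $|\phi_{\ell}(x)|\le x^{-F}$ and $|\psi_{q}(x)|\le x^{-G}$, any $x\ge e$ with $\gamma_{\ell,q}(x)\in\varGamma(\QQ,T)$ satisfies $x\le T^{1/F}$ or $x\le T^{1/G}$, hence $x\le T^{1/\min(F,G)}=:\varphi(T)$.

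The hard part is the Leibniz estimate of the second paragraph: one must arrange the bookkeeping so that the $1/x^{F}$ prefactor survives and the result has exactly the shape $x^{-F}\varphi_{p}(x)$, i.e.\ that multiplying a slow function by $x^{-F}$ yields not merely a slow function but one obeying (\ref{eq: slow_+ parametrization}) --- the delicate inputs being the polynomial-in-$p$ control of the falling factorial and the way the two families of constants amalgamate. Everything else (boundedness of $\varGamma$, the reduction of the inner composition via Lemma \ref{majdkSinLogl}, and the transcendence-of-$\pi$ argument for the common-zero claim) is short.
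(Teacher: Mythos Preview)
Your proof is correct and follows essentially the same strategy as the paper: bound the derivatives of $x^{-F}$, invoke Lemma~\ref{majdkSinLogl} for the inner composition $f\circ s$, and combine via the Leibniz rule to extract the prefactor $x^{-F}$ required by~(\ref{eq: slow_+ parametrization}). Your explicit split into the cases $f=\operatorname{Id}$ versus $|f^{(p)}|\le\alpha^p$ is a useful clarification, and your transcendence-of-$\pi$ argument that $\phi_\ell$ and $\psi_q$ have no common zero actually fills a small gap: the paper simply cites Remark~\ref{hcf}(2), which \emph{requires} precisely this no-common-zero hypothesis but does not verify it for $\gamma_{\ell,q}$. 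The only cosmetic difference is that the paper bounds $\frac{F(F+1)\cdots(F+p-1)}{p!}$ directly by $(F+1)^p$ (so $B_F=0$), whereas you observe it is polynomial in $p$; either estimate suffices.
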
\label{lem:slowparaforspiral}

\begin{proof}
The functions  $h(x)=\frac{1}{x^F}f\circ s(x)$ and $k(x)=\frac{1}{x^G}g\circ \sigma(x)$ are slow since the functions $\frac{1}{x^F}$ and
$\frac{1}{x^G}$
are slow, as well as  $f\circ s$ and $g\circ \sigma$,  by Lemma \ref{majdkSinLogl}.
It is immediate that $h(x)$ and $k(x)$ satisfy condition
(\ref{eq: slow_+ parametrization}) of Remark \ref{rk: slow_+ parametrizations}.
Nevertheless, the following computation is provided in order to make explicit the constants attached to the slowness of $(h,k)$.  We have by formula (\ref{derprod}),
$$
 \frac{h^{(p)}(x)}{p!}
= \sum_{i=0}^p
\frac{1}{(p-i)!}(\frac{1}{x^F})^{(p-i)}  \frac{1}{i!}( f\circ s)^{(i)}(x).$$
Observe that
$$  \frac{1}{(p-i)!}(\frac{1}{x^F})^{(p-i)}
\le
\frac{F}{1}\frac{F+1}{2}\cdots \frac{F+p-i-1}{p-i}\frac{1}{x^{F+p-i}}\le \frac{1}{x^F}\frac{(F+1)^{p-i}}{x^{p-i}}.$$
It follows, by Lemma \ref{majdkSinLogl}, that for any $x\ge e$,  denoting again $A,B,C$ the constants attached to the slow function $s$,
$$\vert \frac{h^{(p)}(x)}{p!} \vert
\le
\sum_{i=0}^p \frac{1}{x^F}\frac{(F+1)^{p-i}}{x^{p-i}}
  \alpha^iA^i i^{(B+1) i} \frac{\log^{iC} x}{x^i}
 $$
$$
\le
\frac{1}{x^F}(p+1) (F+1)^p  \alpha^p  A^p p^{(B+1)p} \frac{\log^{pC} x}{x^p}. $$

Since for $p\ge 1$, one has for instance $p+1\le 2^p$, one finally obtains

$$\vert  \frac{h^{(p)}(x)}{p!}  \vert
\le
\frac{1}{x^F} \Big[ 2(F+1) \alpha A p^{B+1}
\frac{\log^{C} x}{x} \Big]^p, $$
and in the same way, denoting $a,b$ and $c$ the constants attached to the slow function $\sigma$,
$$\vert  \frac{k^{(p)}(x)}{p!} \vert
\le
\frac{1}{x^G}\Big[ 2(G+1) \alpha a p^{b+1}
\frac{\log^{c} x}{x} \Big]^p, $$
showing that $(h,k)$ satisfies  condition
(\ref{eq: slow_+ parametrization}) of Remark \ref{rk: slow_+ parametrizations}
for the following set of four constants
$$ 2\alpha(\max\{F,G\}+1)\max\{A,a\},  \ \max\{B,b\}+1,  \ \max\{C,c\}$$
$$  \hbox{ and } E= \max\{F,G\} .$$
In particular, by Lemma \ref{majdkSinLogl}, $(\phi_\ell, \psi_q)$   satisfies condition
(\ref{eq: slow_+ parametrization}) of Remark \ref{rk: slow_+ parametrizations}
for the constants
$$2\alpha(\max\{F,G\}+1)\max\{\ell,q\}, \ \max\{\ell,q\}+1, \max\{\ell,q\}-1$$
$$ \hbox{ and }    E= \max\{F,G\}.$$
By Remark \ref{rk:hcf2}(2), one can take $\varphi(T)=T^{\frac{1}{\min(F,G)}}$ for a height control
function.
\end{proof}
We now give an explicit possible value for the bound ${\cal B}(x,d)$
(de\-fi\-ned
in Theorem \ref{fewratpoints})   relative to the slow parametrized spiral of (\ref{eq:ParametrizationSpiral}).


We are searching  for a bound ${\cal B}(L,d)$ for the number of solutions of
$$P( \frac{1}{x^F} \sin  \circ \log^\ell x, \frac{1}{x^G} \cos \circ \log^q x) ,$$ for $x$ in some subinterval of $[1,+\infty[$ of length less
than $L$, and for $P\in \RR[X,Y]$ of degree less than $d$.
This amounts to bounding the number of solutions of
$$ Q(x,\sin\circ \log^\ell (x),\cos\circ \log^q(x))=0, $$
for $x$ in some subinterval of $[1,+\infty[$ of length less
than $L$, and for $Q\in \RR[X,Y,Z]$ of degree less than $d(F+G)$.
This finally amounts to bounding the number of solutions of the system
$$ Q(e^y,\sin(z),\cos(w))=z-y^\ell=w-y^q=0,$$
for $y$ in some subinterval of $[1,+\infty[$ of length less
than $\log L $, and for $Q\in \RR[X,Y,Z]$ of degree less than $d(F+G)$.
By the theorem in~\cite[\S1.4]{Kh91}, we obtain
$$ {\cal B}(L,d)\le 4d(F+G)\ell q(d(F+G)+\ell+q+2)^2
(\lfloor(\log L)/\pi)\rfloor+1).$$
Note that by Gwo{\'z}dziewicz et al.~\cite[Lemma 3]{GwKuPa99} or Benedetti and Risler~\cite[Lemma 4.2.6]{BeRi90}
we can dispose of the non-degeneracy hypothesis in the theorem of~\cite[\S1.4]{Kh91}
by bounding instead the number of solutions of the regular system
$$ Q(e^y,\sin(z),\cos(w))=\epsilon, z-y^\ell=w-y^q=0,$$ for $\epsilon$ a regular value
of $y\mapsto Q(e^y,\sin(y^\ell),\cos(y^q))$.

For an appropriate constant $\alpha'$, using the bound $\alpha' \log^4_+ T $ we just obtained for ${\cal B}(T^\frac{1}{\min(F,G)},\log T)$
and using the constants attached to the slow parametrization
$(\ref{eq:ParametrizationSpiral}) $ that we obtained at the end of the proof of  Lemma \ref{slowspiral},
 one sees by Theorem \ref{fewratpoints} and
Remark \ref{rk: slow_+ parametrizations}
that for $\varGamma$ the spiral parametrized by $(\ref{eq:ParametrizationSpiral})$, one can state
 the following proposition.

\begin{prop}\label{fewratpointsLogSpiral} Let
$F,G>0$, $\ell,q\in \NN^*$ and $T\ge 1$. Then there exist
$N=N(F,G,\ell,q)$ (we can take $N=e^{\frac{\max\{\ell,q\}-1}{ \max(F,G)}}$) and
constants $\alpha=\alpha(F,G,\ell,q)$ and $\beta$  (we can take $\beta= 5+4\max\{\ell,q\}$) such that

$$\# \varGamma_{N,+\infty}(\QQ,T)\le \alpha
 \log_+^\beta T, $$
for $\varGamma$ the spiral parametrized by
$\gamma_{\ell,q}(x)=( \frac{1}{x^F}\sin \circ \log^\ell(x) ,
 \frac{1}{x^G}\cos \circ \log^q(x))$.
\end{prop}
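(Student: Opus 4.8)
The plan is to combine Theorem~\ref{fewratpoints} with the explicit data accumulated in this subsection. First I would invoke Lemma~\ref{slowspiral}: it tells us that $\gamma_{\ell,q}=(\phi_\ell,\psi_q)$ of (\ref{eq:ParametrizationSpiral}) is a slow parametrization of the spiral $\varGamma=\gamma_{\ell,q}([e,+\infty[)$, in fact one satisfying the stronger decay condition (\ref{eq: slow_+ parametrization}) of Remark~\ref{rk: slow_+ parametrizations}, with the four explicit constants recorded at the end of that proof, namely $2\alpha(\max\{F,G\}+1)\max\{\ell,q\}$, $\max\{\ell,q\}+1$, $\max\{\ell,q\}-1$ and $E=\max\{F,G\}$, and with height control function $\varphi(T)=T^{1/\min(F,G)}$. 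One also needs to check that $\varGamma$ is transcendental, i.e.\ contains no infinite semialgebraic set; since $\phi_\ell$ (or $\psi_q$) oscillates infinitely often while decaying, the curve meets no real-algebraic curve in an infinite set, so $\varGamma$ is transcendental. (If one prefers, this can be folded into the Bézout bound discussion, where finiteness of intersections with any fixed algebraic curve is already implicit.)

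Next I would feed the Bézout bound derived just above the statement into Theorem~\ref{fewratpoints}. We established, via the theorem of \cite[\S1.4]{Kh91} (with the non-degeneracy hypothesis removed using \cite[Lemma 3]{GwKuPa99} or \cite[Lemma 4.2.6]{BeRi90}), that
$$
{\cal B}(L,d)\le 4d(F+G)\ell q\,(d(F+G)+\ell+q+2)^2\bigl(\lfloor(\log L)/\pi\rfloor+1\bigr),
$$
so that, with $L=\varphi(T)=T^{1/\min(F,G)}$ and $d=\lfloor\log_+T\rfloor$ as prescribed in the proof of Theorem~\ref{fewratpoints}, $\log L$ is $O(\log_+T)$ and hence ${\cal B}(\varphi(T),\log_+T)\le\alpha'\log_+^4T$ for a suitable constant $\alpha'=\alpha'(F,G,\ell,q)$: one factor of $\log_+T$ from $d$, two from $(d(F+G)+\ell+q+2)^2$, one from $\log L$, while $\ell q$ and the other numerical data are absorbed into $\alpha'$.

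Now I would substitute into the conclusion of Theorem~\ref{fewratpoints},
$$
\#\varGamma_{N(C),+\infty}(\QQ,T)\le\alpha\,\log_+^{2(B+C)}(T)\,\log^{C+1}(\varphi(T))\,{\cal B}(\varphi(T),\log_+T),
$$
being careful to use the improvement of Remark~\ref{rk: slow_+ parametrizations}: since our parametrization satisfies (\ref{eq: slow_+ parametrization}) with $E=\max\{F,G\}>0$, the factor $\log^{C+1}\varphi(T)$ is replaced by the single factor $\log\varphi(T)$, and $N(C)=e^C$ is replaced by $N(C,E)=e^{C/E}$, which with $C=\max\{\ell,q\}-1$ and $E=\max\{F,G\}$ gives $N=e^{(\max\{\ell,q\}-1)/\max(F,G)}$. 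With $B=\max\{\ell,q\}+1$ and $C=\max\{\ell,q\}-1$ we get $2(B+C)=4\max\{\ell,q\}$; then $\log\varphi(T)=(1/\min(F,G))\log_+T$ contributes one more power of $\log_+T$ and a constant; and ${\cal B}$ contributes $\log_+^4T$ as above. Multiplying the exponents: $4\max\{\ell,q\}+1+4=5+4\max\{\ell,q\}$, so that $\#\varGamma_{N,+\infty}(\QQ,T)\le\alpha\log_+^{\beta}T$ with $\beta=5+4\max\{\ell,q\}$ and $\alpha=\alpha(F,G,\ell,q)$ absorbing all numerical constants, as claimed.

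The only genuinely delicate point is the bookkeeping of constants and exponents — in particular remembering to apply the $E>0$ improvement of Remark~\ref{rk: slow_+ parametrizations} to cut $\log^{C+1}\varphi(T)$ down to $\log\varphi(T)$, which is exactly what prevents $\beta$ from growing by an extra $\max\{\ell,q\}$ or so, and correctly propagating the constants $A,B,C,E$ from Lemma~\ref{slowspiral} (which already carry the $\alpha$ from the bound $\lvert f^{(p)}\rvert\le\alpha^p$ on $f=\sin,\,g=\cos$, here $\alpha=1$). Everything else is a direct quotation of Theorem~\ref{fewratpoints}, Lemma~\ref{slowspiral}, and the Khovanskii–Bézout estimate already in hand, so I would keep the proof to a short paragraph that (i) cites Lemma~\ref{slowspiral} for the slow parametrization, transcendentality, condition (\ref{eq: slow_+ parametrization}) and $\varphi$, (ii) cites the displayed Bézout bound and simplifies it at $d=\lfloor\log_+T\rfloor$, $L=T^{1/\min(F,G)}$, and (iii) plugs both into Theorem~\ref{fewratpoints} with the Remark~\ref{rk: slow_+ parametrizations} refinement, collecting the exponent $5+4\max\{\ell,q\}$ and the value $N=e^{(\max\{\ell,q\}-1)/\max(F,G)}$.
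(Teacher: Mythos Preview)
Your proposal is correct and follows essentially the same route as the paper: the paper's argument is precisely to read off the slow-parametrization constants $B=\max\{\ell,q\}+1$, $C=\max\{\ell,q\}-1$, $E=\max\{F,G\}$ and $\varphi(T)=T^{1/\min(F,G)}$ from Lemma~\ref{slowspiral}, to quote the Khovanskii B\'ezout bound giving ${\cal B}(\varphi(T),\log_+T)\le\alpha'\log_+^4T$, and then to invoke Theorem~\ref{fewratpoints} together with the improvement of Remark~\ref{rk: slow_+ parametrizations} (replacing $\log^{C+1}\varphi(T)$ by $\log\varphi(T)$ and $N(C)$ by $e^{C/E}$), yielding exactly $\beta=4\max\{\ell,q\}+1+4$ and $N=e^{(\max\{\ell,q\}-1)/\max(F,G)}$. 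Your explicit mention of transcendentality is a minor addition the paper leaves implicit in the finiteness of the B\'ezout bound.
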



\begin{remark}\label{re:elementarycurve}
In Proposition \ref{fewratpointsLogSpiral} we could replace the spiral para\-me\-tri\-zed by $\gamma_{\ell,q}$ by a transcendental curve $\varGamma$ parametrized by
$$ x\mapsto (
u+\frac{1}{x^F} f\circ s(x),
v+\frac{1}{x^G}g \circ \sigma(x)),
$$
where $F,G>0$,
$u,v\in \RR$ and
\begin{itemize}
\item $f$ and $g$ are in the set of functions $h$ satisfying
$$h=\operatorname{Id} \hbox{ or } \exists \alpha (\ge 1), \
\forall x\ge d, \ \forall p\ge 0,\ \vert h^{(p)}(x)\vert \le \alpha^p.$$
\item  $f$, $g$, $s$ and $\sigma$ are elementary functions in the sense of \cite[\S1.5]{Kh91} (defined from the simple functions $e^x$, $\sin x$, $\cos x$, $\log x$, $\arcsin x $, $\arccos x $, $\tan x $, $\arctan x$ and rational functions,
by induction using composition),
\item $s$ and $\sigma$ are compositions of slow functions, respectively with $\log^\ell$ and  $\log^q$, for some $\ell, q\in \NN^*$,
\item   one of the following conditions on $u$ and $v$
is satisfied
\begin{enumerate}
\item $u\in \QQ$ and $f$ has no zeros,
\item $v\in \QQ$ and $g$ has no zeros,
\item $u$ and $v$ are both rational,
\item $u\not \in \QQ$ and $u$ is not a
$U$-number of degree $\nu=1$ in Mahler's classification,
\item $v\not \in \QQ$ and $v$ is not a
$U$-number of degree $\nu=1$ in Mahler's classification.
\end{enumerate}

\end{itemize}
In this situation, on one hand this parametrization is slow by Lemma \ref{slowspiral}, and on the other hand, by the theorem of \cite[\S1.6]{Kh91},  ${\cal B}(L,d)$ is  polynomially bounded in $d$
and $\log_+ L$.
Furthermore, by Remarks \ref{rk:hcf2},
any of the conditions $(1),(2),(3)$ or $(4)$ on
$u$ and $v$
ensure that one can take a power of $T$ as a height control function for $\gamma$.
(In order to apply Remark \ref{rk:hcf2}$(2)$ to condition $(3)$, observe that every common zero of $f\circ s$ and $g\circ \sigma$ maps to the single point $(u,v)$.)
%
 In conclusion, our assumptions on $f,g,s,\sigma, u$ and $v$  imply, in the same way as for Pro\-po\-si\-tion \ref{fewratpointsLogSpiral}, that $\varGamma$ has finite order.

Note that functions $f,g,s$ and $\sigma$ satisfying the above conditions can be built using Remarks \ref{re:AlgeraSlow}.
For instance, $f$ and $g$ can be built from $\sin$, $\cos$,
$\arctan$ and rational functions of negative degree (which are bounded elementary functions, and composition
of those functions  with elementary functions), in order to get elementary bounded functions.
Functions in the  algebra generated by bounded rational functions  in $\log^{r}$,
$\sin \circ \log^\ell$, $\cos \circ \log^q$ and $\arctan  \circ \log^m$ are instances
of slow functions $s$ and $\sigma$.

\end{remark}

\begin{proof}[Proof of Propositions \ref{introcurve} and \ref{introspiral}]
The proof of Propositions \ref{introcurve} and \ref{introspiral} are straightforward,
after repar\-ame\-tri\-zation by $\log$, since the assumptions of Proposition \ref{introcurve} are an axiomatization of the proof of Proposition \ref{fewratpointsLogSpiral}, as made in Remark \ref{re:elementarycurve}. Thus  Remark \ref{re:elementarycurve}  provides a compact interval $J$ of the parameter outside which the curve of Proposition \ref{introcurve} has finite order. But for the piece of this curve parametrized by $J$,
one can apply again our computation, which reduces in the compact case exactly to the computation of \cite{Pi2006} through mild parametrizations.
\end{proof}


\subsection{The case of graphs.}\label{Section:TheCaseofGraphs}

We proceed here similarly as in the preceding section to establish Proposition \ref{introeuler} as a corollary of a detailed proof of a special case.

Given a function $g \colon  J\to \RR $ on some interval $J$ of $\RR$, we denote by
$\varGamma$ (or if needed $\varGamma_g$) the graph of $g$, and for $I\subset J$,
 we denote by
$\varGamma_I$ (or $\varGamma_{g,I}$) the set $\varGamma\cap (I\times \RR)$.

We begin by noting that the case of  $\varGamma$ (for $g$ having  controlled decay) is encompassed by the discussion of Section \ref{Scounting}.
Indeed, for $g  \colon [1,+\infty[\to \RR$ satisfying the conditions
of Definition \ref{slowp}, the map $\gamma \colon  [1,+\infty[\to \RR^2$
defined by $\gamma(x)=(\frac{1}{x}, g(x))$ is a slow parametrization
of a curve of $\RR^2$, with rational points
  in bijective correspondence with rational points of same height
 in   $\varGamma$.
Moreover, $\gamma$ satisfies condition (\ref{eq: slow_+ parametrization}) of Remark \ref{rk: slow_+ parametrizations}, and so as a consequence of Remark \ref{rk: slow_+ parametrizations} and  Theorem \ref{fewratpoints}, we can state the following proposition.

  \begin{prop}\label{fewratpointsgraph}
  Let  $g \colon [a,+\infty[\to \RR$ be a slow function with constants $A$, $B$ and $C$.
   Assume that there exists some function $\varphi  \colon [1,+\infty[\to \RR$ such that  the height
of any rational point of $\varGamma_{\varphi(T),+\infty } $ is
$\ge T$.  Then there exist $N=N(C)$ (given by Proposition \ref{BoundAboveGeneral} and that can be for instance
$e^C$) and a constant $\alpha=\alpha(A,B,C)$
such that
$$\# \varGamma_{N,+\infty}(\QQ,T)\le \alpha
 \log_+^{2(B+C)} T
\log(\varphi(T)){\cal B}(\varphi(T),\log_+ T).$$
In particular, since one can always take $\varphi(T)=T$ by Remark \ref{rk:hcf3}(3),
$$\# \varGamma_{N,+\infty}(\QQ,T)\le \alpha
 \log_+^{2(B+C)+1} T
{\cal B}(T,\log T). $$
Hence $\varGamma$ has finite order as soon as there exists a polynomial $Q$ such that
${\cal B}(x,d)\le Q(\log x,d)$.
\end{prop}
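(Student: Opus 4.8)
The plan is to deduce Proposition \ref{fewratpointsgraph} almost directly from Theorem \ref{fewratpoints} by reducing the graph-counting problem to a curve-counting problem, and then to specialize with the canonical height control function $\varphi(T)=T$.

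\medskip

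First I would observe that if $g\colon[a,+\infty[\to\RR$ is slow with constants $A,B,C$, then the map $\gamma\colon[a,+\infty[\to\RR^2$ given by $\gamma(x)=(1/x,g(x))$ is a slow parametrization of its image $\varGamma'$ in the sense of Definition \ref{slowp}: its first coordinate $x\mapsto 1/x$ decreases to $u=0$ at speed $b(x)=1/x$ (so condition (1) holds), and the derivative bounds in condition (2) hold for $g$ by hypothesis and for $x\mapsto1/x$ by a trivial estimate (indeed $1/x$ even satisfies the stronger condition (\ref{eq: slow_+ parametrization}) with $E=1$, using Remark \ref{1/f} or the direct computation $|(1/x)^{(p)}/p!|=1/x^{p+1}\le(1/x)^p$ for $x\ge1$). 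Moreover the rational points of $\varGamma'$ of height $\le T$ are in bijective, height-preserving correspondence with those of $\varGamma_g$: a point $(1/x,g(x))$ has height $\le T$ iff $1/x$ is rational of height $\le T$ and $g(x)$ is rational of height $\le T$, and the first condition forces $x$ to be a (positive) integer $\le T$, so the corresponding point $(x,g(x))$ of $\varGamma_g$ has the same height. Hence $\#\varGamma_{g,N,+\infty}(\QQ,T)=\#\varGamma'_{N,+\infty}(\QQ,T)$ for every $N$, and a B\'ezout bound for $\varGamma'$ is a B\'ezout bound for $\varGamma_g$ up to the obvious change of variables $X\mapsto 1/X$ (which only rescales the degree by a bounded factor, harmless for the polynomial dependence).

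\medskip

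Next I would invoke Theorem \ref{fewratpoints} for $\gamma$, which gives $\#\varGamma'_{N(C),+\infty}(\QQ,T)\le\alpha\log_+^{2(B+C)}(T)\log^{C+1}(\varphi(T))\,{\cal B}(\varphi(T),\log_+T)$ for any height control function $\varphi$ of $\gamma$. Because $\gamma$ satisfies condition (\ref{eq: slow_+ parametrization}) with $u=0\in\QQ$ and $b(x)=1/x^E$ with $E=1$, Remark \ref{rk: slow_+ parametrizations} (via Remark \ref{hcf}(3)) upgrades the exponent $\log^{C+1}\varphi(T)$ to a single power $\log\varphi(T)$; and since the first coordinate $1/x$ never vanishes, Remark \ref{hcf}(3) shows $\varphi(T)=T^{K}$ is admissible, and in fact one may simply take $\varphi(T)=T$ (any rational point of $\varGamma_{\varphi(T),+\infty}$ has first coordinate $1/x\le1/\varphi(T)$, so if $\varphi(T)\ge T$ its reduced denominator exceeds $T$ — this is exactly the hypothesis placed on $\varphi$ in the statement, and $\varphi(T)=T$ certainly works here). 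Substituting $\varphi(T)=T$ yields $\#\varGamma_{N,+\infty}(\QQ,T)\le\alpha\log_+^{2(B+C)+1}(T)\,{\cal B}(T,\log_+T)$, which is the displayed "in particular" bound, and with a general admissible $\varphi$ one gets the first displayed bound.

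\medskip

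Finally, the finite-order conclusion: if ${\cal B}(x,d)\le Q(\log x,d)$ for some polynomial $Q$, then plugging $x=T$ and $d=\log_+T$ gives ${\cal B}(T,\log_+T)\le Q(\log T,\log_+T)$, which is polynomially bounded in $\log T$; multiplying by the $\log_+^{2(B+C)+1}T$ factor keeps it polynomially bounded in $\log T$, so $\log\#\varGamma(\QQ,T)=O(\log\log T)$, i.e. $\rho(\varGamma_{N,+\infty})<\infty$. For the part $\varGamma_{[a,N]}$ over the compact parameter interval, the same computation applies — as remarked at the end of the proof of Theorem \ref{fewratpoints}, in the compact case the argument reduces to Pila's mild-parametrization count of \cite{Pi2006} — so $\rho(\varGamma)=\max(\rho(\varGamma_{[a,N]}),\rho(\varGamma_{N,+\infty}))<\infty$. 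I do not anticipate a serious obstacle; the only point requiring a little care is the height-preservation in the bijection $\varGamma_g\leftrightarrow\varGamma'$ (one must note that $1/x\in\QQ$ of height $\le T$ forces $x\in\ZZ_{>0}$, $x\le T$) and the bookkeeping that turning $P(X,Y)$ of degree $d$ into a polynomial in $(1/X,Y)$, cleared of denominators, changes the degree only by a bounded multiple, so the polynomial B\'ezout hypothesis transfers.
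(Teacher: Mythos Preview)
Your approach is essentially the paper's own: the paragraph preceding Proposition~\ref{fewratpointsgraph} already introduces the parametrization $\gamma(x)=(1/x,g(x))$, notes the height-preserving bijection with $\varGamma_g$, observes that $\gamma$ satisfies condition~(\ref{eq: slow_+ parametrization}), and then invokes Theorem~\ref{fewratpoints} together with Remark~\ref{rk: slow_+ parametrizations}. So the strategy is identical and your added detail (the B\'ezout transfer under $X\mapsto 1/X$, the compact piece via \cite{Pi2006}) is appropriate.

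One slip worth correcting: you write that ``$1/x$ rational of height $\le T$ forces $x$ to be a (positive) integer $\le T$''. This is false --- if $x=a/b$ in lowest terms then $1/x=b/a$ is also in lowest terms, so $1/x$ rational of height $\le T$ only forces $x$ rational with $\max(|a|,|b|)\le T$, not $x\in\ZZ$. Fortunately the conclusion you need (that the bijection $(x,g(x))\leftrightarrow(1/x,g(x))$ preserves height) is still correct, precisely because the height of $x$ equals the height of $1/x$; just replace your incorrect justification by this one-line observation. The rest of the argument is unaffected.
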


\begin{remark}\label{RemarkSincx}

By the Lindemann-Weierstass theorem there are no rational points in the graph of $\sin$ (resp. $\log$) except the point $(0,0)$ (resp.\ $(1,0)$), since for a nonzero
algebraic number $x\in \CC$, $e^{x}$ is transcendental. A natural way to build functions from $\sin$ with graph having {\sl a priori} the most chance to contain rational points is to compose $\sin$ with a function sending
rational points to transcendental ones, such as $x\mapsto r  x$,  for $r$ a transcendental number or $x\mapsto \log^\ell x$, for $\ell\in \NN^*$. Proposition \ref{fewratpointsgraph} allows us to treat both cases.

In the first case, for the function $x\mapsto  \sin(cx)$, $c\in \RR_+^*$,  restricted to a compact interval $[a-\frac{\pi}{2c},a+\frac{\pi}{2c}]$, where $a\in \RR$, the method of proofs of Section \ref{Scounting} applies, and reduces to the methods of \cite{BoPi1989} and \cite{Pi2006} for analytic functions defined on compact intervals. In this situation, since \cite[\S1.4]{Kh91} provides $\alpha' \log^2 T $ as bound for the number of solutions of $P(x,\sin(cx))=0$, $x\in [a-\frac{\pi}{2c},a+\frac{\pi}{2c}] $, $\deg(P)=\lfloor\log_+ T\rfloor$, we get a bound
$\#\varGamma_{[a-\frac{\pi}{2c},a+\frac{\pi}{2c}]}\le \alpha'' \log^2_+ T. $ Here the constant $\alpha''$ does not depend on
$a$, since we have uniform bounds for the derivatives of $x\mapsto \sin(cx)$ with respect to $a$. Consequently for the graph $\varGamma_c$ of $x\mapsto \sin(c x)$, we have
$$\#\varGamma_{c,\RR} (\QQ,T)\le \alpha c T\log_+^2 T. $$
This bound is quite sharp, since
$\#\varGamma_{\frac{\pi}{n},\RR}(\QQ,T)$ is bounded from below by $ \frac{\alpha'''}{n} T$ for $T\ge n$.

The second case cannot be reduced to the compact case and uniform bounds for derivatives with respect to translations, and thus requires control on the derivatives at infinity, as in the assumption of Proposition  \ref{fewratpointsgraph}. We hereafter treat this case as a consequence of Proposition \ref{fewratpointsgraph}.

\end{remark}

  \begin{cor}\label{Cor:fewgraphsinlogl} Let  $\ell \in \NN^*$, $g_\ell \colon \RR_+^*\to \RR$  be the function defined by
  $g_\ell(x)= \sin \circ \log^\ell(x)$ and let $\varGamma_\ell$ its graph. Then there exist constants $\alpha=\alpha_\ell$ and $\beta=\beta_\ell$  ($\beta=5+4\ell$ being possible) such that for any $T\ge 1$,
  $$ \varGamma_\ell(\QQ,T)\le \alpha  \log_+^\beta T. $$

  \end{cor}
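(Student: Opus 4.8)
The plan is to derive Corollary~\ref{Cor:fewgraphsinlogl} as a direct application of Proposition~\ref{fewratpointsgraph} to the function $g_\ell=\sin\circ\log^\ell$, together with the B\'ezout-type bound coming from Khovanskii's theory. First I would check that $g_\ell$ is a slow function on some interval $[a,+\infty[$: by Lemma~\ref{majdkSinLogl}(2) applied with $f=\sin$ (so $\alpha=1$, since $\vert\sin^{(p)}\vert\le 1$) we get
$$
\frac{\vert g_\ell^{(p)}(x)\vert}{p!}\le (2^\ell)^p\, p^{(\ell+1)p}\,\frac{\log^{p(\ell-1)}x}{x^p},
$$
so $g_\ell\in S([e,+\infty[)$ with attached constants $A=2^\ell$, $B=\ell+1$, $C=\ell-1$. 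Here $g_\ell$ is indeed bounded (by $1$), so the hypotheses of Definition~\ref{slowp} are met after the harmless normalization of Remark~\ref{re: f and g bounded by 1}.

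Next I would produce a height control function. Since by Lindemann--Weierstrass the only rational point in the graph of $\sin$ is $(0,0)$, and $\log^\ell x = 0$ only at $x=1$, the graph $\varGamma_\ell$ has at most the single rational point $(1,0)$; in particular for $T\ge 1$ one can take $\varphi(T)=T$ trivially (indeed Proposition~\ref{fewratpointsgraph} already records that $\varphi(T)=T$ is always admissible via Remark~\ref{hcf}(3)), so $\log\varphi(T)=\log_+T$ and $\log^{C+1}\varphi(T)=\log_+^{\ell}T$ up to the $\log_+$ convention.

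Then I would produce the B\'ezout bound ${\cal B}(x,d)$. As in the spiral computation, counting zeros of $P(x,\sin\circ\log^\ell x)$ for $x$ in a subinterval of length $\le L$ and $\deg P\le d$ reduces, via the substitution $x=e^y$ and introduction of the auxiliary equation $z=y^\ell$, to counting solutions of the system
$$
Q(e^y,\sin z)=z-y^\ell=0
$$
for $y$ in an interval of length $\le\log L$ and $Q$ of degree $\le d$; by the theorem in \cite[\S1.4]{Kh91} (using \cite[Lemma 3]{GwKuPa99} or \cite[Lemma 4.2.6]{BeRi90} to dispose of non-degeneracy) this is $\le \alpha' d\,\ell\,(d+\ell+2)\,(\lfloor(\log L)/\pi\rfloor+1)$, hence with $d=\lfloor\log_+T\rfloor$ and $L=\varphi(T)=T$ one gets ${\cal B}(T,\log_+T)\le \alpha''\log_+^3 T$. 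Plugging $A=2^\ell$, $B=\ell+1$, $C=\ell-1$, $\varphi(T)=T$ and this B\'ezout bound into Proposition~\ref{fewratpointsgraph} yields
$$
\#\varGamma_{\ell,N,+\infty}(\QQ,T)\le \alpha\,\log_+^{2(B+C)}T\cdot\log_+T\cdot\log_+^3T=\alpha\,\log_+^{4\ell+4}T,
$$
and absorbing the remaining factor (to reach $\beta=5+4\ell$) and adjoining the finitely many rational points with parameter in the compact piece $[a,N]$ (handled by the same computation, which in the compact case reduces to \cite{Pi2006}, or simply by noting the graph has only the point $(1,0)$ there) gives the stated bound $\#\varGamma_\ell(\QQ,T)\le\alpha\log_+^\beta T$. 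The main obstacle is bookkeeping: tracking the exponents of $\log_+T$ accurately through the three contributions (slowness factor $\log_+^{2(B+C)}T$, the $\log\varphi(T)$ factor, and the B\'ezout factor) so that the total comes out at $5+4\ell$, and making sure the $\log$ versus $\log_+$ conventions and the normalization $g_\ell\le 1$ are invoked correctly; the analytic content is already entirely contained in the earlier lemmas.
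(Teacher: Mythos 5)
Your overall strategy is the same as the paper's: apply Proposition~\ref{fewratpointsgraph} using the slow constants from Lemma~\ref{majdkSinLogl}(2), the trivial height control $\varphi(T)=T$, and a Khovanskii B\'ezout bound, then handle the compact piece separately. However, there are three genuine problems.

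First, the B\'ezout exponent is off. For the system $Q(e^y,\sin z)=z-y^\ell=0$, the theorem in \cite[\S1.4]{Kh91} gives (specializing the spiral computation) a bound of the form $4d\ell(d+\ell+2)^{2}(\lfloor(\log L)/\pi\rfloor+1)$ -- note the \emph{square} on $(d+\ell+2)$. With $d=\lfloor\log_+T\rfloor$ and $L=T$ this yields ${\cal B}(T,\log_+T)\lesssim\log_+^{4}T$, not $\log_+^{3}T$ as you write. Your total exponent $2(B+C)+1+3=4\ell+4$ is therefore not justified; the correct arithmetic is $2(B+C)+1+4=4\ell+5$, which is the paper's $\beta$. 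The remark about ``absorbing the remaining factor'' cannot repair this, since you would be inflating an incorrectly derived bound rather than controlling an actual error term.

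Second, the claim that $\varGamma_\ell$ contains only the rational point $(1,0)$ is false, and Lindemann--Weierstrass does not give it. For rational $x\ne 1$, $\log^\ell x$ is transcendental, but $\sin$ of a transcendental number can be rational (e.g.\ $\sin(\pi/6)=1/2$); this is precisely why the paper's Remark~\ref{RemarkSincx} singles out $\sin\circ\log^\ell$ as a function whose graph has \emph{a priori} a good chance of containing rational points. So the second alternative you offer for the compact piece does not work, and you must fall back on \cite{Pi2006}. Third, even granting the \cite{Pi2006} route, your decomposition $[a,N]\cup[N,\infty)$ with $a=e$ (where $g_\ell$ is slow) leaves $(0,e)$ untreated, whereas the domain is all of $\RR_+^*$. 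The paper fills this gap by mapping $\varGamma_{\ell,\,]0,1/N]}$ onto $\varGamma_{\ell,\,[N,\infty)}$ via $(x,y)\mapsto(1/x,-y)$ (using $\log(1/x)=-\log x$), then invoking \cite{Pi2006} on $[1/N,N]$. Without this symmetry step your covering of the graph is incomplete.
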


  \begin{proof} Using the theorem of \cite[\S1.4]{Kh91} in the same way that we did
  for Proposition \ref{fewratpointsLogSpiral}, one obtains here, for the curve $\varGamma_{\ell} \cap ([1,T]\times \RR)$, the bound
  ${\cal B}(T,d)\le  4d\ell(d+\ell+ 2)^2
(\lfloor\frac{\log T}{\pi}\rfloor+1)$.
  Since by Lemma \ref{slowspiral} the derivatives of  $g_\ell$ satisfy  the bound required by Proposition \ref{fewratpointsgraph}, one deduces from this proposition the existence of constants
  $N,\alpha$ and $\beta$, depending only on $\ell$, such that
  $$\# \varGamma_{\ell, [N,+\infty[}(\QQ,T)\le \alpha
  \log_+^\beta T. $$
Since ${\cal B}(T,\log T)\le \alpha'_\ell
\log^4 T$, for some $\alpha'_\ell>0$, by Lemma \ref{majdkSinLogl} and by Proposition \ref{fewratpointsgraph}, $\beta=5+4\ell$ is possible.
Assuming that $N\ge 1$,  a bound on the same kind  also holds over the interval  $]0,1/N]$ since the one-to-one transformation $(x,y)\mapsto(1/x,-y)$ maps
the rational points of height less than $T$ of  $\varGamma_{\ell, ]0;1/N ] }$ onto the rational points of
height at most  $T$ of $\varGamma_{\ell, [N ;+\infty )}$.
Finally, over $[1/N,N]$, by \cite{Pi2006}, we also have the same kind of bound for $\# \varGamma_{\ell,[1/N,N]}(\QQ,T)$
since $g_{\ell\vert [1/N,N]}$ is   an analytic function on a compact domain and thus this graph
comes with its obvious mild-parametrization (see the discussion after Theorem 1.5 in \cite{Pi2006}).

  \end{proof}



\begin{remark}\label{re:elementarygraph}
More generally, and similarly to Remark \ref{re:elementarycurve} and the proofs of Propositions \ref{introcurve} and \ref{introspiral}, one can consider transcendental
elementary functions defined in \cite{Kh91} that are given by composition of a slow function with some power of $\log$ (to have a B\'ezout bound ${\cal B}(x,d)$ as required in Proposition \ref{fewratpointsgraph}), in order to get instances of graphs with finite order.
For instance, the graph of any coordinate of the curve of Example \ref{exampleintro} has finite order.  In this way we get in particular a proof of Proposition \ref{introeuler}.
Concerning
Proposition \ref{introeuler} as stated in the introduction, note that the function $af(c\log^\ell)$ might not bounded by $1$ in absolute value (as required in the definition of slow function), but then $af(c\log^\ell)/\lfloor a +1 \rfloor$ is slow, and finally if $\alpha\log^\beta T$ bounds
$\#\varGamma_{\frac{af(c\log^\ell)}{\lfloor a +1 \rfloor}}(\QQ,T)$ for some $\alpha, \beta$, then
$\alpha'\log^\beta T$ bounds
$\#\varGamma_{af(c\log^\ell)}(\QQ,T)$, for some $\alpha'$.

\end{remark}

\begin{remark}
Similar results stating that the graph of some function has finite order have been recently proved, in particular for entire functions from $\CC$ to $\CC$ (see \cite{BoJo15b}), where B\'ezout estimates are provided by the growth of these functions in the spirit of Coman and Poletsky~\cite{CoPo07}.  In the real case, since the growth of the derivatives is not prescribed by the growth of the function itself, one has to consider some
bound for all the derivatives as an assumption. For instances of functions with graphs of finite order (over a compact interval),
 see \cite{CM2016}, where indeed the assumptions concern the Taylor coefficients at some point of the series.
\end{remark}

\begin{remark}
When it comes to counting rational points on graphs, a classical function to look at is the Riemann zeta function $\zeta \colon ]1,+\infty[\to \RR$, given by $\zeta(x)=\sum_{n=1}^{+\infty}\frac{1}{n^x}$. Let us denote its graph by $\varGamma_\zeta$. By van den Dries and Speissegger~\cite{DrSp00}, $\varGamma_\zeta$ is o-minimal, so $\#\varGamma_\zeta(\QQ,T) $ is sub-polynomial by \cite{PiWi2006}.
Moreover, it  is known since \cite{Ma11} that for some constant $\alpha>0$
$$\#\varGamma_{\zeta, ]2,3[}(\QQ,T)\le
\alpha \displaystyle \frac{\log^2 T }{(\log \log T)^2}.  $$
The interval $]2,3[$ may be replaced by any  bounded interval, as proved in \cite{Be11}. In \cite{BoJo15} it is finally proved that one can bound $\#\varGamma_{\zeta, ]1,+\infty[}(\QQ,T)$ (as well as the number of algebraic points of  height $\le T$ and degree $\le k$ over $\QQ$) in the following way: for some constant $\alpha>0$,
$$\#\varGamma_{\zeta, ]1,+\infty[}(\QQ,T) \le \alpha \log^3 (T)\log^3 \log T.$$
It is indicated in \cite[ page 1154]{BoJo15} that one can even get a
$\log^2(T)\log^2 \log T $ bound.
We can here easily give a bound in the form $\log^4(T)\log \log T $ as a consequence of Proposition \ref{fewratpointsgraph}.

Another classical special function that can be treated by our approach is the Euler $\Gamma$ function defined by
$\displaystyle \Gamma(x)=\int_0^{+\infty} t^{x-1}e^{-t} \ dt$, considered for $x\ge 1$.
Let $\varGamma_\Gamma$ be the graph of this function.
 As for the Riemann zeta function,
$\varGamma_\Gamma$ is o-minimal (again see \cite{DrSp00}).
It has been proved in \cite{Be14} that for any interval $I$ of length $1$,
$$\displaystyle \#\varGamma_{\Gamma,I}(\QQ,T) \le \alpha \frac{\log^2 T }{\log \log T}, $$
and in \cite{BoJo15} the following bound is given:
$$ \#\varGamma_{\Gamma,]0,+\infty[} \le \alpha \log^2(T)\log^3 \log T .$$
We give below a very rough bound of the form $\log^{11}(T)\log \log T$ as a direct application of Proposition \ref{fewratpointsgraph}.

 For the special functions $\zeta $ and $\Gamma$
our bounds have no better exponents then the best known exponents. But be aware that no rational points are expected, or at least known, in the graphs of these functions, except for $(n,n!)$, $n\in \NN^*$, for $\Gamma$.
Thus, even a small bound of type $\alpha\log^{\beta} T$ is probably very far from being optimal.
 Note that by \cite[Theorem 8.2]{CoPo07}, the B\'ezout estimate that we shall use for $\zeta$ is quite sharp; this just shows that any existing general method based on B\'ezout estimate
probably will not produce particularly sharp bounds.
We produce bounds here only to show that our method works for these particular instances
of functions, viewed as real ones,
 and how it can be adapted for the Euler function, since this function  is not slow.
 Toward this end, we try to present shorter computations that are probably not the sharpest that one can get.

\subsubsection{The Riemann $\zeta$ function}\label{RiemanZeta}
First of all, we observe that the derivatives of $\zeta$ satisfy the bound of Proposition \ref{fewratpointsgraph} on any interval
$[a,+\infty[\subset ]1,+\infty[$, for some $A=A_a, B=1$ and $C=0$.
Indeed, we have on one hand for $p\ge 0$, $\zeta^{(p)}=\displaystyle \sum_{n\ge 1} (-1)^{p}\frac{\log^p n}{n^x}$.
On the other hand, since the study of $\displaystyle x\mapsto \frac{x^p}{n^x}$ shows that
$\displaystyle \frac{1}{n^x}\le \frac{p^p}{e^px^p \log^p n} $, for any $a>1$ one can choose
$\lambda =\frac{1}{2} -\frac{1}{2a}\in ]0,1[$,
such that for any $x\ge a$ and $p\ge 1$,
$$\displaystyle \vert \zeta^{(p)}(x)\vert
\le  \sum_{n\ge 1}  \frac{\log^p n}{n^{\lambda x}} \frac{1}{n^{(1-\lambda) x}}
\le   \frac{p^p}{(\lambda e)^px^p}  \zeta((1-\lambda)x)$$
$$
\le  \frac{p^p}{(\lambda e)^px^p}
\zeta(\frac{a}{2}+\frac{1}{2})\le
\Big(\frac{ \zeta(\frac{a}{2}+\frac{1}{2})}{\lambda e}\Big)^p \frac{p^p}{x^p}.$$
Then observe that, with the same $\lambda$ as above
and the notation
$m_a=\zeta(\frac{a}{2}+\frac{1}{2})-1$, for $\zeta$ one can take for a height function control
\begin{equation}\label{hcfzeta}
\varphi(T)=\displaystyle\frac{\log(m_aT)}{\lambda\log 2}.
\end{equation}
This follows from the following remarks.
In case $\zeta(x)\in \QQ$ as height less than $T$, $T\ge 1$, then
$\zeta(x)-1 \ge \frac{1}{T}$.
But for any $x\in [a,+\infty[$,  one has
$$ \zeta(x)-1 \le \sum_{n\ge 2}\frac{1}{n^{\lambda x}}\frac{1}{n^{(1-\lambda) x}} \le   m_a2^{-\lambda x}.  $$
 Consequently, for $x\ge \frac{\log(m_aT)}{\lambda\log 2}$, $\zeta(x)$ has height at least $T$.

Finally, by \cite[Theorem 8.2]{CoPo07} or \cite[Proposition 1]{Ma11}, one knows that for some constant $c>0$,
\begin{equation}\label{Bezoutzeta}
{\cal B}(\varphi(T),\log T)\le
c(\log(T)+\varphi(T)\log \varphi(T))\log T.
\end{equation}
From  (\ref{hcfzeta}), (\ref{Bezoutzeta}) and Proposition \ref{fewratpointsgraph} we deduce that for any $a>1$ and
$T\ge 3$,
$$ \varGamma_{\zeta, [a,+\infty[}(\QQ,T)\le \alpha \log^4(T)\log \log T.$$

\begin{remark}\label{rk: bounding zeta by 1}
For $a>1$, on $[a,+\infty[$, $\zeta$ may be not bounded by $1$, and thus on this
interval $\zeta $ is not a slow function. But as already noticed
in Remark \ref{re: f and g bounded by 1}, one can always divide $\zeta$ by some large enough integer $M_a$ in order to fulfil the definition of slow function, since $\alpha' \log^\beta T$ bounds $\#\varGamma_{\zeta, [a,+\infty[}(\QQ,T)$ whenever $\alpha \log^\beta T$ bounds $\#\varGamma_{\frac{\zeta}{M_a}, [a,+\infty[}(\QQ,T)$.
\end{remark}

\begin{remark}
For $u,v$ two real numbers such that $u+v$ is irrational and not a $U$-number of degree $1$, one deduces from the study above that the graph of $u+v\zeta$ has finite order on $[a,+\infty[$. Indeed, up to dividing by a large enough integer as observed in Remark \ref{rk: bounding zeta by 1} , one has
that $u+v\zeta$ is slow. Furthermore a B\'ezout bound for $\zeta$
is also a B\'ezout bound for $u+v\zeta$. Finally, taking into account the form of the B\'ezout bound for $\zeta$ given in
(\ref{Bezoutzeta}), to apply Proposition \ref{fewratpointsgraph} it remains to prove that some power of $\log T$
is a height control function for the graph of $u+v\zeta$.
For this, observe that on one hand
$$ \vert u+v\zeta(x) -(u+v) \vert \le \vert v\vert m_a 2^{-\lambda x}, $$
and on the other hand, since an irrationality measure function for
$u+v$ is of the form $K\log T$ ($K>0$), whenever $ u+v\zeta(x)$ is a rational number of height $\le T$, one has
$$ \frac{1}{T^K}  \le \vert u+v\zeta(x) -(u+v) \vert  .$$
It follows that if $ u+v\zeta(x)$ is a rational number of height $\le T$, then $ \frac{1}{T^K}  \le \vert v\vert m_a 2^{-\lambda x}$,
and so $x\le K'\log T$ for some $K'>0$.

\end{remark}



\subsubsection{The Euler $\Gamma$ function}\label{EulerGamma}
We first remark that since for any $p\ge 0$, since  $\displaystyle\frac{\log^p t}{t}\le
(\frac{p}{e})^p $
 and since $\Gamma^{(p)}(x)=\displaystyle \int_0^{+\infty} \log^p(t)t^{x-1}e^{-t}\ dt$, one has
\begin{equation}\label{eq:BoundDerGamma}
 \Gamma^{(p)}(x)\le (\frac{p}{e})^p\Gamma(x+1)=
(\frac{p}{e})^p x \Gamma(x).
\end{equation}
Now let us denote by $f$ the inverse function of $\Gamma$ on $[1,+\infty[$, and $x=f(y)$. One can show by induction on $p\ge 1$ that
$\displaystyle \frac{f^{(p)}(y)}{p!}$ is a sum of at most $p!$ terms   of the form
$$ c (\Gamma^{(j_1)}(x))^{m_1}\cdots (\Gamma^{(j_p)}(x))^{m_p}
(\Gamma'(x))^{-k}, $$
with $\vert c \vert \le 2^{p-2}$,
$k\in [0,2p-1]$,
$ j_1,\cdots, j_p \in [2,p]$,
$ m_1+ \cdots + m_p\in [0,p-1]$,
$-k+j_1m_1+\cdots + j_pm_p=-1$ and
$-k+ m_1+\cdots +m_p=-p.$
From this observation and from (\ref{eq:BoundDerGamma}) one has for any $p\ge 1$ and any $y\ge 1$
and for a set $J$ of indices $m_i, j_r$ of cardinality less than $p!$,
$$ \frac{f^{(p)}(y)}{p!}
\le 2^p\sum_J (\frac{p}{e})^{ \sum_{r=1}^pj_rm_r}
 (x\Gamma(x))^{ \sum_{r=1}^p m_r}
 (\Gamma'(x))^{-k}$$
$$ \le (\frac{2}{e^2})^p p^{2p}  \sum_J (x\Gamma(x))^{ \sum_{r=1}^p m_r}
 (\Gamma(x))^{-k}$$
 $$
\le p!(\frac{2}{e^2})^p  p^{2p}
 (\frac{x}{\Gamma(x)})^p=  (\frac{2}{e^2})^p  p^{3p}
 (\frac{f(y)}{y})^p. $$
Since for some constant $D$, for any $x\ge 1$, $\Gamma(x)\ge D e^x$ we have for some constant $\delta >0$, for any $y\ge 1$, $\log y\ge \delta f(y)$, and thus one has for
any  $y,p\ge 1$
\begin{equation}\label{eq:SlowGamma}
\vert  \frac{f^{(p)}(y)}{p!}\vert \le
(\frac{2}{\delta e^2})^p  p^{3p} (\frac{\log y}{y})^p.
\end{equation}
This does not show that $f$ is slow, since $f$ is not bounded. But, as already noted in Remark \ref{1/f},
a direct computation using $f(y)\ge 1$, formula (\ref{FaaDiBruno}) and the inequality (\ref{eq:SlowGamma})
shows that $1/f$ is slow, with constants
 $ A= \displaystyle\frac{4}{\delta e^2}, B=4, C=1.$
 Note that
$$\#\varGamma_{\frac{1}{f}, [1,T]}(\QQ,T)
=\#\varGamma_{f, [1,T]}(\QQ,T)$$
$$
=  \#\varGamma_{\Gamma, [1,\Gamma^{-1}(T)]}(\QQ,T)
= \#\varGamma_{\Gamma, [1,+\infty[}(\QQ,T).$$
 In order to bound $\#\varGamma_{\frac{1}{f}, [1,T]}(\QQ,T)$ using Proposition \ref{fewratpointsgraph}, we need to produce for
 $1/f$ a bound
 $b_{1/f}(T,\log T)$. But clearly it is enough
to find a  bound
 $b_\Gamma(\frac{\log T}{\delta},\log T)$ for $\Gamma$, since again,
 $\log y\ge \delta f(y)$.  Such a bound is provided by \cite[Proposition 3.1]{Be14} in the form
 $$b_\Gamma(\frac{\log T}{\delta},\log T)\le c \log^2(T)\log \log T.$$
 We conclude by Proposition \ref{fewratpointsgraph} that
 $$ \#\varGamma_{\Gamma, [1,+\infty[}(\QQ,T)\le \alpha \log^{11}(T)\log \log T. $$

\end{remark}

\section{Some connections to logic}\label{S;logic}

In this final section, the reader is assumed to be familiar with
definability theory over the field of real numbers (see, e.g.,
\cite{DrMi96} or Wilkie~\cite{Wi09} for a brief
introduction).

Given $E\subseteq \RR^n$, we let $E^{\mathrm{trans}}$ be the
result of removing from $E$ all infinite semialgebraic subsets.
(Hence, the only nonempty semialgebraic subsets of $E^{\mathrm{trans}}$ are singletons.)

Let $\mathfrak{R}$ be a fixed, but arbitrary, structure on the
real field; ``definable'' means ``definable with parameters'', unless indicated otherwise.

The seminal paper~\cite{PiWi2006} established the possibility
of obtaining uniform large-scale asymptotics on height bounds
of definable sets.
We recall the basic result:

\begin{thm}[{\cite[1.8]{PiWi2006}}]\label{pwthm}
If $\mathfrak{R}$ is o-minimal and $E$ is definable, then $\#E^{\mathrm{trans}}(\QQ,T)$ is sub-polynomial.
%
\end{thm}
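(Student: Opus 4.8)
The plan is to reduce Theorem~\ref{pwthm} to the quantitative bound established in Pila--Wilkie \cite{PiWi2006}, treating it essentially as a statement to be quoted rather than reproved. First I would recall that the main counting theorem of \cite{PiWi2006} asserts: if $E\subseteq \RR^n$ is definable in an o-minimal expansion of the real field, then for every $\varepsilon>0$ there is a constant $c=c(E,\varepsilon)$ such that $\#E^{\mathrm{trans}}(\QQ,T)\le c\,T^{\varepsilon}$ for all $T\ge 1$. Granting this, sub-polynomiality of $\#E^{\mathrm{trans}}(\QQ,T)$ in the sense defined in Section~\ref{S:intro} is immediate: for any fixed $\varepsilon>0$ and $T$ large,
\[
\frac{\log\bigl(\max\{1,\#E^{\mathrm{trans}}(\QQ,T)\}\bigr)}{\log T}
\le \frac{\log c + \varepsilon\log T}{\log T}\xrightarrow[T\to+\infty]{}\varepsilon,
\]
and since $\varepsilon>0$ is arbitrary, the limit is $0$, which is exactly the definition of sub-polynomial growth.

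The only genuine content beyond quoting \cite{PiWi2006} is to check that the definition of $E^{\mathrm{trans}}$ used here (remove all infinite semialgebraic subsets) matches the ``algebraic part'' / ``transcendental part'' terminology of \cite{PiWi2006}, and that the rational-point height convention is the same up to harmless normalization. I would dispatch this in a sentence: the algebraic part $E^{\mathrm{alg}}$ in \cite{PiWi2006} is by definition the union of all connected infinite semialgebraic subsets of $E$, so $E^{\mathrm{trans}}=E\setminus E^{\mathrm{alg}}$ agrees with the set defined in the excerpt, and the height function on $\QQ^n$ used in \cite{PiWi2006} is comparable to ours up to a bounded power, which does not affect sub-polynomiality.

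The step I expect to be least mechanical — though still routine — is making the $\varepsilon$-management explicit and noting that no uniformity in $E$ is needed here, since the theorem is stated for a single definable set: the constant $c$ is allowed to depend on $E$ and $\varepsilon$, so there is nothing to optimize. In short, the proof is: invoke \cite[1.8]{PiWi2006} to get the $T^\varepsilon$ bound, then take logarithms and let $T\to+\infty$ followed by $\varepsilon\to 0^+$. I would therefore write the proof as essentially a two-line deduction, with a remark pointing the reader to \cite{PiWi2006} for the substantive work.

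\begin{proof}
This is the counting theorem of Pila and Wilkie \cite[1.8]{PiWi2006}, restated in the terminology of Section~\ref{S:intro}. Recall that \cite{PiWi2006} shows that, for $\mathfrak R$ o-minimal and $E$ definable, for every $\varepsilon>0$ there is $c=c(E,\varepsilon)>0$ with
\[
\#E^{\mathrm{trans}}(\QQ,T)\le c\,T^{\varepsilon}\qquad(T\ge 1),
\]
where $E^{\mathrm{trans}}$ is the complement in $E$ of the union of all connected infinite semialgebraic subsets, which coincides with the set defined above (and the height normalization is immaterial). Fixing $\varepsilon>0$, for $T>1$ we get
\[
\frac{\log\bigl(\max\{1,\#E^{\mathrm{trans}}(\QQ,T)\}\bigr)}{\log T}\le\frac{\log_+ c}{\log T}+\varepsilon,
\]
so $\varlimsup_{T\to+\infty}\dfrac{\log\bigl(\max\{1,\#E^{\mathrm{trans}}(\QQ,T)\}\bigr)}{\log T}\le\varepsilon$. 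Since $\varepsilon>0$ was arbitrary, the limit is $0$; that is, $\#E^{\mathrm{trans}}(\QQ,T)$ is sub-polynomial.
\end{proof}
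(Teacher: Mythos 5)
Your proposal is correct and matches the paper exactly: Theorem~\ref{pwthm} is stated with the citation \cite[1.8]{PiWi2006} in its header and is quoted, not reproved, so the only work is the two-line deduction from the $c\,T^\varepsilon$ bound to the sub-polynomial condition as defined in Section~\ref{S:intro}, which you carry out correctly. Your remark matching $E^{\mathrm{trans}}$ here to the transcendental part in \cite{PiWi2006} is a sensible sanity check but is also exactly the identification the paper intends.
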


This is the best possible bound in this
generality (see \cite{Pi06counterexample}, \cite{Su02},  \cite{Su06}  for information). Two questions arise
naturally:
\begin{itemize}
\item[(A)] To what extent is o-minimality necessary?
\item[(B)] Are there examples of $\mathfrak{R}$ such
that $E^{\mathrm{trans}}$ has finite order for every definable
set $E$?
\end{itemize}

There is a trivial positive answer to~(B), because $E^{\mathrm{trans}}$
is finite if and only if $E$ is semialgebraic, if and only if $E$
is definable in the real field.
Thus, we modify the question:
\begin{itemize}
\item[(B${}'$)] Are there examples of $\mathfrak{R}$ such that
$E^{\mathrm{trans}}$ has finite order for every definable set
$E$, and there is some definable $S$ such that
$S^{\mathrm{trans}}$ contains a compact set of positive
topological dimension?
\end{itemize}
By Binyamini and Novikov~\cite{BiNo16},
$\RR^{\mathrm {RE}}$ (the expansion of the real field by the
restrictions of $\exp$ and $\sin$ to $[0,1]$) provides a
positive answer to (B${}'$). 
It is well known by now that
$\RR^{\mathrm {RE}}$ is o-minimal. 
Wilkie has conjectured that
the expansion of the real field by $\exp$ (on all of $\RR$) is
another example; for more information on Wilkie's Conjecture
and progress theretoward,
see~\cite{BiNo16}, \cite{ClPiWi16}, \cite{JoMiTh11}, \cite{JoTh12},  \cite{Pi2006}, \cite{Pi07}, \cite{Pi10}.
We have a conjecture of our own:

\subsubsection*{Conjecture}
$E^{\mathrm{trans}}$ has finite order for each $E$ definable in
the expansion of $\RR^{\mathrm {RE}}$ by any logarithmic spiral
$\mathbb S_\omega$.
(We regard our result that $\mathbb
S_\omega$ has finite order as an encouraging first step.)
A strictly (by Tychonievich~\cite{Ty12}) weaker version:
$E^{\mathrm{trans}}$ has finite order for each $E$ definable in
$(\RR,+,\cdot,\mathbb S_\omega)$. 
Even weaker (potentially):
$E^{\mathrm{trans}}$ has finite order for each $E$
$\emptyset$-definable in $(\RR,+,\cdot,\mathbb S_\omega)$.

As for question~(A), there is an obvious necessary condition:
If $\mathfrak R$ defines the set of integers, $\ZZ$, then we
cannot have even $\#E^{\mathrm{trans}}(\QQ,T)=o(T)$ for every
$\emptyset$-definable $E\subseteq \RR$. 
We can do better:

\begin{prop}\label{nwd}
The following are equivalent.
\begin{enumerate}
\item
$\#E^{\mathrm{trans}}(\QQ,T)=o(T)$ for every definable
$E\subseteq \RR$.
\item
Every definable subset of $\RR$ either
has interior or is nowhere dense.
\item
For every definable $E\subseteq \RR^n$, if
no coordinate projection of $E$ has interior, then
$\#E(\QQ,T)$ is sub-polynomial.
\end{enumerate}
(And similarly with ``definable'' replaced by
``$\emptyset$-definable''.)
\end{prop}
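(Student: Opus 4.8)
The plan is to prove the cycle $(3)\Rightarrow(1)\Rightarrow(2)\Rightarrow(3)$; the $\emptyset$-definable variant follows verbatim, since the only constants adjoined to the defining parameters below are rational numbers, which are $\emptyset$-definable in the real field. I would open with two elementary observations on $E^{\mathrm{trans}}$. First, for $E\subseteq\RR$: if $E$ has interior then $E^{\mathrm{trans}}=\emptyset$, since an interval inside $E$, together with any further point of $E$, is an infinite semialgebraic subset of $E$ and is therefore removed; while if $E$ has empty interior then $E$ contains no infinite semialgebraic subset at all, so $E^{\mathrm{trans}}=E$. Second, and more generally, if $E\subseteq\RR^n$ has \emph{no coordinate projection with interior}, then $E^{\mathrm{trans}}=E$: an infinite semialgebraic $S\subseteq\RR^n$ has some single-coordinate projection that is infinite, hence a semialgebraic subset of $\RR$ containing an interval, hence with interior.

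Granting these, $(3)\Rightarrow(1)$ is immediate: for definable $E\subseteq\RR$, either $E$ has interior and $\#E^{\mathrm{trans}}(\QQ,T)=0$, or $E$ has empty interior, $E^{\mathrm{trans}}=E$, and $(3)$ with $n=1$ makes $\#E(\QQ,T)$ sub-polynomial, a fortiori $o(T)$. For $(1)\Rightarrow(2)$ I would argue contrapositively. If some definable $E\subseteq\RR$ has empty interior but is not nowhere dense, then $\overline E$ contains an interval; choose rationals $p<q$ with $(p,q)\subseteq\overline E$ and replace $E$ by $\{(x-p)/(q-p):x\in E\}\cap(0,1)$, which is definable over the same parameters, has empty interior, and is dense in $(0,1)$. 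Then $F:=E$ and $F':=(0,1)\setminus E$ are definable with empty interior, so $F=F^{\mathrm{trans}}$ and $F'=(F')^{\mathrm{trans}}$; and since $(0,1)=F\sqcup F'$, the rationals of $(0,1)$ of height at most $T$---of which there are $\gg T^{2}$---split between $F$ and $F'$, so $\max\{\#F(\QQ,T),\#F'(\QQ,T)\}\gg T^{2}$ for all large $T$. Hence one of $\#F^{\mathrm{trans}}(\QQ,T)$, $\#(F')^{\mathrm{trans}}(\QQ,T)$ fails to be $o(T)$, contradicting $(1)$.

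The substance is in $(2)\Rightarrow(3)$. Here I would first invoke that $(2)$ forces $\mathfrak R$ to be suitably tame, so that a Pila--Wilkie-type conclusion is available at the level of the line: a \emph{uniform} sub-polynomial bound on $\#E_a(\QQ,T)$ as $\{E_a\}$ runs over any definable family of empty-interior subsets of $\RR$. Granting this one-dimensional input, I would prove the uniform form of $(3)$ by induction on $n$: given definable $E\subseteq\RR^n$ with no coordinate projection having interior, the projection $\pi_n(E)$ is an empty-interior definable subset of $\RR$, hence has sub-polynomially many height-$\le T$ rationals; each fibre $E_t=\{x\in\RR^{n-1}:(x,t)\in E\}$ again has no coordinate projection with interior, since its coordinate projections are contained in those of $E$, so by the inductive hypothesis applied uniformly over the definable family $\{E_t\}_t$ it has sub-polynomially many height-$\le T$ rationals, uniformly in $t$; and as $(x,t)\in E$ is rational of height $\le T$ precisely when $t$ and all coordinates of $x$ are, summing over the admissible $t$ gives $\#E(\QQ,T)\le(\text{sub-polynomial})\cdot(\text{sub-polynomial})$, which is sub-polynomial. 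The second observation of the first paragraph is what keeps this clean: it guarantees $E=E^{\mathrm{trans}}$, so there is no separate ``algebraic part'' of $E$ to account for, even when semialgebraic sets sit inside $\overline E$.

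The main obstacle is precisely the one-dimensional input used in $(2)\Rightarrow(3)$: extracting from the purely one-dimensional hypothesis $(2)$ a genuine, uniform Pila--Wilkie-type bound on rational points of empty-interior definable subsets of $\RR$. Theorem~\ref{pwthm} does not apply directly, since a structure satisfying $(2)$ need not be o-minimal and need not even have o-minimal open core---expansions of the real field by suitably sparse multiplicative sets such as $\{2^{n}:n\in\ZZ\}$ satisfy $(2)$ but have non-o-minimal closed definable sets---so one must work instead within the finer theory of tame (``d-minimal''-type) expansions of the real field and the counting estimates it provides. Everything else---the $E^{\mathrm{trans}}$ bookkeeping, the pigeonhole count in $(1)\Rightarrow(2)$, and the fibring induction---is routine once that input is secured.
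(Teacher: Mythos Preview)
Your arguments for $(3)\Rightarrow(1)$ and $(1)\Rightarrow(2)$ are correct and match the paper's (the paper is slightly terser: for $(1)\Rightarrow(2)$ it simply notes that if $E$ has empty interior but is dense in an interval $I$, then $I\cap E$ and $I\setminus E$ are both equal to their own transcendental parts, so $\#I(\QQ,T)=o(T)$, which is false; and for $(3)\Rightarrow(1)$ it uses only the inclusion $E^{\mathrm{trans}}\subseteq E\setminus\operatorname{int}(E)$).

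The gap is exactly where you locate it: your $(2)\Rightarrow(3)$ is not a proof but an outline resting on an unproved ``one-dimensional Pila--Wilkie-type input'', together with a fibring induction that would further require a \emph{uniform-in-families} version of that input. You are candid that this is an obstacle, but you do not point to any actual result that supplies it; the reference to ``d-minimal-type'' theory is too vague to count as a proof. In particular, no analogue of Theorem~\ref{pwthm} is available under hypothesis $(2)$ alone, so there is no off-the-shelf Pila--Wilkie statement to invoke.

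The paper's $(2)\Rightarrow(3)$ avoids all of this by a completely different (and much shorter) route. It cites Hieronymi--Miller \cite[1.4]{HiMi15}: under hypothesis $(2)$, any definable $E\subseteq\RR^n$ none of whose coordinate projections has interior has \emph{Assouad dimension zero}. From that purely metric fact one reads off the sub-polynomial bound by an elementary covering count: for each $\epsilon>0$ there is $C>0$ with $\#E(\QQ,T)\le \operatorname{net}_{1/T}(E\cap[-T,T]^n)\le C\,T^{\epsilon}$. No Pila--Wilkie machinery, no induction on $n$, and no uniformity over definable families is needed. This is the missing idea in your proposal.
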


\begin{proof}
1$\Rightarrow$2.
Let $E\subseteq \RR$ be definable and have no
interior.
Suppose to the contrary that $E$ is dense in some
nonempty open interval $I$; 
then $(I\cap
E)^{\mathrm{trans}}=I\cap E$ and $(I\setminus
E)^{\mathrm{trans}}=I\setminus E$, yielding
$\#I(\QQ,T)=\#(I\cap E)(\QQ,T)+\#(I\setminus E)(\QQ,T)=o(T)$,
which is clearly false.

2$\Rightarrow$3.
By Hieronymi and Miller~\cite[1.4]{HiMi15},
$E$ has Assouad dimension zero (see~\cite[\S~4]{HiMi15} for the
definition). 
Thus, given $\epsilon>0$ there exists $C>0$ such
that for all $T>1$
$$
\#E(\QQ,T)\leq \operatorname{net}_{1/T}(E\cap [-T,T]^n)\leq
C\left(\frac{T}{1/T}\right)^{\epsilon/2}=CT^\epsilon.
$$

3$\Rightarrow$1.
If $E\subseteq \RR$, then
$E^{\mathrm{trans}}\subseteq E\setminus\operatorname{int}(E)$;
if $E$ is definable, then so is
$E\setminus\operatorname{int}(E)$.
\end{proof}

There are several classes of structures known to satisfy
condition~\ref{nwd}.2 that are not o-minimal. 
It would take us
too far afield to discuss them here, but see~\cite{HiMi15}
and~\cite{Mi05} for examples and references.
Remarkably, the following questions seem to be open:
Does Condition~\ref{nwd}.2 imply that $\#E^{\mathrm{trans}}(\QQ,T)$ is sub-polynomial
for every definable set $E$?
If $E\subseteq \RR^n$ is a boolean combination of open sets and
$(\mathbb{R},+,\cdot,E)$ does not define $\ZZ$, is $\#E^{\mathrm{trans}}(\QQ,T)$ sub-polynomial?

\bibliographystyle{acm} 
\bibliography{biblio}

\begin{thebibliography}{10}

\bibitem{Ba75}
{\sc Baker, A.}
\newblock {\em Transcendental number theory}.
\newblock Cambridge University Press, London-New York, 1975.

\bibitem{BeRi90}
{\sc Benedetti, R., and Risler, J.-J.}
\newblock {\em Real algebraic and semi-algebraic sets}.
\newblock Actualit\'es Math\'ematiques. [Current Mathematical Topics]. Hermann,
  Paris, 1990.

\bibitem{Be11}
{\sc Besson, E.}
\newblock {\em Sur le nombre de points rationnels de la fonction z\^eta de
  Riemann}.
\newblock M\'emoire de Master 2. \'Ecole Normale Sup\'erieure de Lyon, 2011.

\bibitem{Be14}
{\sc Besson, E.}
\newblock Points rationnels de la fonction gamma d'{E}uler.
\newblock {\em Arch. Math. (Basel) 103}, 1 (2014), 61--73.

\bibitem{BiNo16}
{\sc Binyamini, G., and Novikov, D.}
\newblock Wilkie's conjecture for restricted elementary functions.
\newblock {\em preprint arxiv:1605.04671,\/} (2016).

\bibitem{BoPi1989}
{\sc Bombieri, E., and Pila, J.}
\newblock The number of integral points on arcs and ovals.
\newblock {\em Duke Math. J. 59}, 2 (1989), 337--357.

\bibitem{BoJo15b}
{\sc Boxall, G., and Jones, G.}
\newblock Rational values of entire functions of finite order.
\newblock {\em Int. Math. Res. Not. IMRN}, 22 (2015), 12251--12264.

\bibitem{BoJo15}
{\sc Boxall, G.~J., and Jones, G.~O.}
\newblock Algebraic values of certain analytic functions.
\newblock {\em Int. Math. Res. Not. IMRN}, 4 (2015), 1141--1158.

\bibitem{BrHeSa06}
{\sc Browning, T.~D., Heath-Brown, D.~R., and Salberger, P.}
\newblock Counting rational points on algebraic varieties.
\newblock {\em Duke Math. J. 132}, 3 (2006), 545--578.

\bibitem{Ci77}
{\sc Cijsouw, P.~L.}
\newblock A transcendence measure for {$\pi $}.
\newblock In {\em Transcendence theory: advances and applications ({P}roc.
  {C}onf., {U}niv. {C}ambridge, {C}ambridge, 1976)}. Academic Press, London,
  1977, pp.~93--100.

\bibitem{ClPiWi16}
{\sc Cluckers, R., Pila, J., and Wilkie, A.}
\newblock Uniform parameterization of subanalytic sets and diophantine
  applications.
\newblock {\em preprint arxiv:1605.05916,\/} (2016).

\bibitem{CoPo07}
{\sc Coman, D., and Poletsky, E.~A.}
\newblock Transcendence measures and algebraic growth of entire functions.
\newblock {\em Invent. Math. 170}, 1 (2007), 103--145.

\bibitem{CM2016}
{\sc Comte, G., and Yomdin, Y.}
\newblock Zeroes and rational points of analytic functions.
\newblock {\em preprint arxiv:1608.02455,\/} (2016).

\bibitem{DrMi96}
{\sc Dries, L. v.~d., and Miller, C.}
\newblock Geometric categories and o-minimal structures.
\newblock {\em Duke Math. J. 84}, 2 (1996), 497--540.

\bibitem{DrSp00}
{\sc Dries, L. v.~d., and Speissegger, P.}
\newblock The field of reals with multisummable series and the exponential
  function.
\newblock {\em Proc. London Math. Soc. (3) 81}, 3 (2000), 513--565.

\bibitem{Gr86}
{\sc Gromov, M.}
\newblock Entropy, homology and semialgebraic geometry.
\newblock {\em Ast\'erisque}, 145-146 (1987), 5, 225--240.
\newblock S{\'e}minaire Bourbaki, Vol. 1985/86.

\bibitem{GwKuPa99}
{\sc Gwo{\'z}dziewicz, J., Kurdyka, K., and Parusi{\'n}ski, A.}
\newblock On the number of solutions of an algebraic equation on the curve
  {$y=e^x+\sin x$}, {$x>0$}, and a consequence for o-minimal structures.
\newblock {\em Proc. Amer. Math. Soc. 127}, 4 (1999), 1057--1064.

\bibitem{HiMi15}
{\sc Hieronymi, P., and Miller, C.}
\newblock Metric dimensions and tameness in expansions of the real field.
\newblock {\em preprint arxiv:1510.00964,\/} (2015).

\bibitem{JoMiTh11}
{\sc Jones, G.~O., Miller, D.~J., and Thomas, M. E.~M.}
\newblock Mildness and the density of rational points on certain transcendental
  curves.
\newblock {\em Notre Dame J. Form. Log. 52}, 1 (2011), 67--74.

\bibitem{JoTh12}
{\sc Jones, G.~O., and Thomas, M. E.~M.}
\newblock The density of algebraic points on certain {P}faffian surfaces.
\newblock {\em Q. J. Math. 63}, 3 (2012), 637--651.

\bibitem{Kh91}
{\sc Khovanski{\u\i}, A.~G.}
\newblock {\em Fewnomials}, vol.~88 of {\em Translations of Mathematical
  Monographs}.
\newblock American Mathematical Society, Providence, RI, 1991.
\newblock Translated from the Russian by Smilka Zdravkovska.

\bibitem{Ma11}
{\sc Masser, D.}
\newblock Rational values of the {R}iemann zeta function.
\newblock {\em J. Number Theory 131}, 11 (2011), 2037--2046.

\bibitem{Mi05}
{\sc Miller, C.}
\newblock Tameness in expansions of the real field.
\newblock In {\em Logic {C}olloquium '01}, vol.~20 of {\em Lect. Notes Log.}
  Assoc. Symbol. Logic, Urbana, IL, 2005, pp.~281--316.

\bibitem{Pi2006}
{\sc Pila, J.}
\newblock Mild parameterization and the rational points of a {P}faff curve.
\newblock {\em Comment. Math. Univ. St. Pauli 55}, 1 (2006), 1--8.

\bibitem{Pi06counterexample}
{\sc Pila, J.}
\newblock Note on the rational points of a {P}faff curve.
\newblock {\em Proc. Edinb. Math. Soc. (2) 49}, 2 (2006), 391--397.

\bibitem{Pi07}
{\sc Pila, J.}
\newblock The density of rational points on a {P}faff curve.
\newblock {\em Ann. Fac. Sci. Toulouse Math. (6) 16}, 3 (2007), 635--645.

\bibitem{Pi10}
{\sc Pila, J.}
\newblock Counting rational points on a certain exponential-algebraic surface.
\newblock {\em Ann. Inst. Fourier (Grenoble) 60}, 2 (2010), 489--514.

\bibitem{PiWi2006}
{\sc Pila, J., and Wilkie, A.~J.}
\newblock The rational points of a definable set.
\newblock {\em Duke Math. J. 133}, 3 (2006), 591--616.

\bibitem{Ro55}
{\sc Roth, K.~F.}
\newblock Rational approximations to algebraic numbers.
\newblock {\em Mathematika 2\/} (1955), 1--20; corrigendum, 168.

\bibitem{Su02}
{\sc Surroca, A.}
\newblock Sur le nombre de points alg\'ebriques o\`u une fonction analytique
  transcendante prend des valeurs alg\'ebriques.
\newblock {\em C. R. Math. Acad. Sci. Paris 334}, 9 (2002), 721--725.

\bibitem{Su06}
{\sc Surroca, A.}
\newblock Valeurs alg\'ebriques de fonctions transcendantes.
\newblock {\em Int. Math. Res. Not.\/} (2006), Art. ID 16834, 31.

\bibitem{Ty12}
{\sc Tychonievich, M.~A.}
\newblock The set of restricted complex exponents for expansions of the reals.
\newblock {\em Notre Dame J. Form. Log. 53}, 2 (2012), 175--186.

\bibitem{Wi09}
{\sc Wilkie, A.~J.}
\newblock o-minimal structures.
\newblock {\em Ast\'erisque}, 326 (2009), Exp. No. 985, vii, 131--142 (2010).
\newblock S{\'e}minaire Bourbaki. Vol. 2007/2008.

\bibitem{Yo87}
{\sc Yomdin, Y.}
\newblock Volume growth and entropy.
\newblock {\em Israel J. Math. 57}, 3 (1987), 285--300.

\end{thebibliography}

\end{document}